\documentclass[a4paper]{article}

\usepackage{amsfonts}
\usepackage{amsthm}
\usepackage{amssymb}
\usepackage{hyperref}
\usepackage{enumitem}
\usepackage{pgf}
\usepackage{float}
\usepackage{mathtools}
\usepackage{graphicx}

\usepackage{imakeidx}
\usepackage{calc}
\usepackage{cancel}
\usepackage{bbm}
\usepackage{ bbold }
\usepackage{comment}
\usepackage{cite}
\usepackage{hhline}
\usepackage{multirow}
\usepackage[left=3cm, right=3cm]{geometry}

\newtheorem{theorem}{Theorem}[section]
\newtheorem{prop}[theorem]{Proposition}
\newtheorem{coro}[theorem]{Corollary}
\newtheorem{lemma}[theorem]{Lemma}
\newtheorem{example}[theorem]{Example}

\newtheorem{definition}[theorem]{Definition}
\newtheorem{remark}[theorem]{Remark}

\newcommand{\ECT}{\operatorname{ECT}}
\newcommand{\bb}{\mathbb}
\newcommand{\dd}{\operatorname{d}}
\newcommand{\dist}[2]{\operatorname{dist}^{\operatorname{ECT}}_{(#1, #2)}}
\newcommand{\Peri}{\operatorname{Peri}}

\newcommand{\la}{\langle}
\newcommand{\ra}{\rangle}
\newcommand{\dsupp}{d^{\operatorname{supp}}}
\newcommand{\dcent}{d^{\operatorname{cent}}}
\newcommand{\dbottle}{{d_i^{\operatorname{bottle}}}}
\newcommand{\normsupp}[1]{\|#1\|^{\operatorname{supp}}}
\newcommand{\normcent}[1]{\|#1\|^{\operatorname{cent}}}

\title{The Euler Characteristic Transform from a Convex Geometric Perspective}
\author{Jesús Gacías Franco \\ \\ Universidad de Zaragoza, \url{jgacias@unizar.es}}
\date{July 30, 2026}

\begin{document}
	\maketitle

	\begin{abstract}
		By examining the relationship between the support function of convex geometry and the Euler Characteristic Transform (ECT) of topological data analysis, we develop new tools and suggest variations on some common ECT pipelines. Specifically, we put forward new definitions of ECT-induced pseudodistances, which have the advantage of being invariant under common euclidean isometries and require no cutoff parameter to compare shapes with distinct Euler characteristic. These definitions rely on a generalization of the convex geometric concept of the \emph{Steiner point}, which we define in general as a distinguished point given by the ECT. We also show how convex geometry provides a path to recover interesting geometric information of a flat shape from its ECT, namely, its perimeter, for which we give an explicit formula. By building on these concepts and leveraging persistent homology, we define \emph{Steiner barcodes} as an isometry invariant feature of shapes, as well as homological variants of the support function and Steiner point. Finally, we put these constructions to the test in shape classification tasks, providing lightweight features for aligned and misaligned datasets.
	\end{abstract}
	
	\tableofcontents
	
	\section{Introduction}
\label{SectIntro}
A classical idea in convex geometry, and more generally, integral geometry, is to study a subset of Euclidean space by extracting geometric information from its various ``slices'', that is, intersections with hyperplanes. The simplest of these constructions is the \emph{support function}, which uniquely encodes a compact convex set $K\subseteq \bb R^n$ as a function $h_K:\mathbb S^{n-1} \to \mathbb R$. For each direction $v \in \bb S^{n-1}$, $h_K(v)$ records the height of the last hyperplane normal to $v$ that has a non-empty intersection with $K$. More recently, Topological Data Analysis (TDA) has seen success in studying shapes by looking at how topological invariants, such as homology groups, or the Euler characteristic, vary along a filtration of a shape. The \emph{Euler Characteristic Transform} (ECT), first defined in \cite{turner2014persistent}, has a very similar flavor to the support function, as it essentially collects topological information about all \emph{sublevel sets} --that is, parts of the shape that lie below hyperplanes-- and uniquely encodes a shape $M\subseteq \bb R^n$ as a function $\ECT(M):\mathbb S^{n-1} \times \mathbb R \to \bb Z$. 

Unsurprisingly, these two concepts can be related: the support function of a convex set can be computed from its ECT (see definition \ref{DefSupport} and lemma \ref{SuppMatchesConvex}), providing an explicit formula for its additive extension. To the author's knowledge, this fact has not been explicitly mentioned in the ECT literature, and is perhaps folklore. The main goal of this paper is to show how this interplay between TDA and convex geometry can be leveraged to extend several more definitions and results, and how these concepts show promise in applied settings.

\paragraph{Outline and main results} 
The outline of the paper is as follows. We begin in section \ref{SectPre} by giving an overview of the Euler Characteristic Transform and a couple of notions from convex geometry which will be used throughout.

Section \ref{SectExt} is the main bulk of the article, extending certain convex geometric constructions to a broader context via the ECT. In particular, generalizing the support function and the Steiner point allows us to propose two new ECT-induced pseudodistances between shapes, which we name the \emph{support distance} $\dsupp$ and the \emph{Steiner centered distance} $\dcent$, satisfying the following properties:
\begin{itemize}
	\item Both pseudodistances are invariant under common euclidean isometry. The support distance remains invariant under translation of any of the involved shapes.
	\item There is no need for introducing an additional parameter when computing pseudodistances between two shapes $M$ and $N$ with distinct Euler characteristic, contrary to classical ECT-induced distances.
	\item The Steiner centered distance extends previous definitions of ECT-induced distances when both shapes have Euler characteristic zero, or when the two shapes have the same Euler characteristic and are aligned in a particular way.
	\item The support distance is robust to noise in the form of point clouds: the distance between two shapes $M$ and $N$ remains unchanged if any amount of ``noise'' in the form of isolated points is added to one or the other.
\end{itemize}
As the second point states, these new definitions solve one recurring issue in the literature, where comparison of shapes with distinct Euler characteristic is usually dependent on some sort of cutoff parameter $R$, the radius of a ball where an integral takes place, and is highly dependent on the position of the shapes within that ball. The price to pay, however, is that they are merely pseudodistances. We give, however, certain guarantees that, at least in the plane, these pseudodistances do not mistake shapes too often, thanks to the results in section \ref{SectGeo}, which relate these distances with meaningful geometric quantities of the studied shapes, namely, their perimeter. The main result of this section is theorem \ref{PeriRecovery}, reproduced below:
\begin{theorem}[Perimeter recovery]
	Let $M \subseteq \bb R^2$ be a compact, tame shape, and let $a \in \bb R^2$. Then:
	\[ \int_{\bb S^1} \int_{\bb R} (\ECT(M)(v,t)- \chi(M) \ECT(a)(v,t)) dt dv=\Peri(M)\, .\]
\end{theorem}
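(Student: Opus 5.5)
The plan is to reduce the statement to convex sets by additivity, and then identify the convex case with Cauchy's formula for the perimeter through the support function.

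First I would check that the integrand has compact support in $t$ for each $v$, so the integral makes sense. Since $M$ is compact, for $t > \max_{x\in M} x\cdot v$ the sublevel set is all of $M$. In that range $\ECT(M)(v,t)=\chi(M)$ and $\ECT(a)(v,t)=1$, so the integrand vanishes. For $t$ below both $\min_{x\in M}x\cdot v$ and $a\cdot v$, both terms are $0$.

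Next I would observe that the left-hand side, call it $\Phi(M)$, is additive in $M$. By inclusion–exclusion for $\chi$ on tame sets,
\[\ECT(M\cup N)=\ECT(M)+\ECT(N)-\ECT(M\cap N),\]
and $\chi$ is also additive, so $\Phi(M\cup N)=\Phi(M)+\Phi(N)-\Phi(M\cap N)$. I would also check that $\Phi$ does not depend on $a$. Changing $a$ to $b$ changes the inner integral by $\chi(M)(b-a)\cdot v$, and $\int_{\bb S^1} (b-a)\cdot v\, dv=0$ by symmetry.

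The convex case is a direct computation. For a compact convex $K$, the sublevel set $K\cap\{x\cdot v\le t\}$ is convex, hence contractible or empty. So $\ECT(K)(v,t)=\mathbb 1_{t\ge -h_K(-v)}$, in line with Lemma \ref{SuppMatchesConvex}. The inner integral is then $\int_{\bb R}(\mathbb 1_{t\ge -h_K(-v)}-\mathbb 1_{t\ge a\cdot v})\,dt = h_K(-v)+a\cdot v$. Integrating over $\bb S^1$ kills the linear term, and Cauchy's formula $\int_{\bb S^1}h_K = \Peri(K)$ gives the result. Under this formula a segment of length $\ell$ has perimeter $2\ell$ and a point has perimeter $0$, which I take to be the paper's convention: $\Peri$ is the additive extension of $2V_1$.

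Finally I would pass from convex sets to general tame $M$. Since $\Phi$ and $\Peri$ agree on compact convex sets and both are additive, they agree on all finite unions of compact convex sets by inclusion–exclusion (Groemer's extension theorem guarantees that the additive extension is well defined). For a general compact tame set I would use the o-minimal cell decomposition to write $M$ as a finite disjoint union of relatively open cells. Alternatively, I would work cell by cell with the Euler-calculus version of $\ECT$, which is additive on constructible functions. Each cell is a point, an open arc, or an open $2$-cell, and its closure is a compact tame set of simple type. I expect this step to be the main obstacle. The difficulty is that closures of cells with curved boundaries are not polyconvex, so additivity alone does not suffice. For such pieces I would approximate by polygons, for instance via a finite triangulation of $M$ refined by subdividing the curved edges. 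I would then argue continuity of both sides: $\Peri$ converges for the refined polygonal approximations, and the ECT integrals converge by dominated convergence, since the integrand is bounded by a constant times a width. Alternatively, one could directly verify the formula for an open arc via a Crofton-type computation, namely that the integral counts critical points of the height function weighted by height, giving $2\cdot\text{length}$.
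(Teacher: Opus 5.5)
Your preliminary steps are sound and match the paper: the compact support of the integrand, independence of $a$ (the paper simply translates $M$ by $-a$), the convex case via Cauchy's formula (\ref{PeriSupport}), and additivity to reach simplicial complexes.

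One small correction. The paper defines $\Peri$ through $\mathcal C^1$ triangulations, not as ``the additive extension of $2V_1$''. Groemer's theorem only provides such an extension on finite unions of convex sets. For simplicial complexes, the identification is exactly the paper's count: faces, isolated edges and isolated points, with internal edges cancelling and vertices contributing zero. You should carry this out rather than posit it.

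The genuine gap is the final step. You rightly call it the main obstacle, but then settle it by ``continuity of both sides'' and dominated convergence. Neither side is continuous under Hausdorff approximation. A staircase with $n$ steps converging to a diagonal segment of length $\ell$ has total length $\sqrt2\,\ell$. By the simplicial case, both $\Peri$ and the ECT integral equal $2\sqrt2\,\ell$ for every $n$, against $2\ell$ in the limit. In that example both hypotheses of dominated convergence fail. For $v$ nearly orthogonal to the segment, sublevel sets have on the order of $n$ components, so no bound on $|\ECT(S_n)(v,t)|$ is uniform in $n$. Nor is there any reason for $\ECT(S_n)(v,t)$ to converge pointwise. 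Your ``constant times a width'' presupposes such a uniform constant, and the almost-everywhere convergence $\ECT(S_n)(v,t)\to\ECT(M)(v,t)$ is not addressed. Inscribed polygons are the right kind of approximation. But both estimates must be derived from how $S_n$ is built, and that derivation is the actual content of the proof.

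The paper supplies this in theorem \ref{Approx}. It takes vertices on $\partial M$ finely enough that each arc between consecutive vertices is convex, concave or straight. Additivity then splits $\int_{\bb S^1} h_{S_n}-\int_{\bb S^1} h_M$ into two kinds of contributions: convex bumps and dents, handled by the convex case, and convex arcs, handled by lemma \ref{ConvexCurve}. For a convex arc $C$, the identity $\ECT(C)(v,t)+\ECT(C)(-v,-t)=1+\chi(C\cap L_{v,t})$ turns the integral into an integral over lines of the number of points in $C\cap L_{v,t}$. Convexity bounds this count by $2$ for both $C$ and its inscribed polygons, which gives the uniform bound. A case analysis on lines meeting $C$ transversally in $0$, $1$ or $2$ points shows the counts eventually agree, which gives the pointwise convergence. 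This even yields the exact identity $\int h_{S_n}-\int h_M=\Peri(S_n)-\Peri(M)$ for each $n$.

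Alternatively, your Crofton remark can be made into a complete proof, though not in the form you state (critical points weighted by height), which is left uncomputed. Pairing $v$ with $-v$ shows the left-hand side equals the integral over unoriented lines of $\chi(M\cap L)$. For almost every line, this equals half the number of crossings with boundary curves plus the number of crossings with isolated curves. The classical Crofton formula $\int \#(\gamma\cap L)\,dL=2\,\mathrm{length}(\gamma)$ then finishes the argument.
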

Here, ``Peri'' denotes the perimeter of the shape, defined before the theorem. Thus, the perimeter of a flat shape can be computed from its ECT, without the need of reconstructing it. This result is not new, being a consequence of Crofton's formula in integral geometry (see, for instance, \cite{hug_integral_2020}), but stating it in terms of the ECT proves to be useful. In particular, geometric bounds for the support, Steiner centered, and classical ECT distances are given.

In section \ref{SectIsometry} we briefly define isometry invariant features of a shape, \emph{Steiner barcodes}, which in turn allow us to define pseudodistances between shapes that remain invariant under isometry of any of the involved shapes. We put support functions and Steiner barcodes to the test in section \ref{SectExperiment} by using them as input for training support vector machines in shape classification tasks, comparing them against training directly with ECT data. Our methods are lightweight, remain competitive in small datasets, and seem more robust than the ECT in certain contexts, although they worsen as the number of classes grows and have other shortcomings. Steiner barcodes in particular seem promising for classification tasks in unaligned datasets with few classes. In section \ref{SectionBetti} we propose another direction of study: new kinds of support functions with potentially better discriminating power, at the cost of a higher computational cost. Finally, in section \ref{SectConclusion}, we summarize our results and provide some open questions.

	\section{Preliminaries}
\label{SectPre}
\subsection{TDA and the Euler Characteristic Transform}
\label{ECT}
The Euler Characteristic Transform (from here on, \emph{ECT}), is a topological transform widely used to encode geometric datasets with connectivity information, such as meshes, simplicial complexes, or digital images (we refer the reader to \cite{munch2025invitation} for an introduction, and to \cite{turner2014persistent} for its first appearance). It relies on the Euler characteristic --an easily computable topological invariant-- and a generalization stemming from it, Euler calculus, as its theoretical backbone: we refer the interested reader to \cite{curry2012euler} for an introduction. Euler calculus is best understood in the context of o-minimal geometry, a theory that extends the usual combinatorial definition of the Euler Characteristic for a wide range of subsets of $\mathbb R^n$ --called ``definable'' or ``tame''-- while preserving additivity and multiplicativity, that is, for any two tame $M, N \subseteq \mathbb R^{n}$:
\begin{equation} \label{AddEuler}
	\chi(M)+\chi(N)=\chi(M \cup N)+\chi(M \cap N)\, ,
 \end{equation}
and 
\begin{equation}
	\chi(M \times N)=\chi(M)\cdot\chi(N) \, .
\end{equation}
Tame subsets of $\mathbb R^n$ are a wide family that includes all semialgebraic sets, and is closed under unions, intersections, and cartesian products. In particular, this family includes simplicial complexes. For tame sets that are, in addition, compact, this extended notion of the Euler characteristic agrees with the usual one. In the following, we will be dealing exclusively with compact and tame subsets of $\bb R^n$, so that we will work with the usual Euler characteristic. Informally, we will call such sets \textbf{shapes}, keeping in mind that we refer to compact sets that are, in a sense, ``well-behaved''. Tameness is not the main focus of this article, and so we will glance over the specifics; for more background, we refer the reader to \cite{coste1999introduction}.

\begin{definition}
	Let $M\subseteq \mathbb R^n$ be compact and tame. For each $t\in \mathbb R$ and $v \in \mathbb S^{n-1}$ (seeing the sphere as a subset of $\mathbb R^n$), let $M_{v,t}$ be the \textbf{sublevel} of $M$ with direction $v$ and height $t$, that is:
	\[ M_{v,t}:=\{ x \in M | \langle x, v\rangle \leq t \}\, . \]
	The \textbf{Euler Characteristic Transform (ECT)} of $M$ is the function $\ECT(M): \mathbb S^{n-1} \times \mathbb R \to \mathbb Z$ given by:
	
	\[ \ECT(M)(v,t)=\chi(M_{v,t})\, .\]
\end{definition}

\begin{remark}
	$M_{v,t}$ is the intersection of a compact and tame set $M$ and a closed and tame sublevel $\{ x \in \mathbb R^n |  \langle x , v \rangle \leq t\}$, so it is indeed compact and tame.
\end{remark}

The ECT can be visualized as follows: pick any direction $v\in \mathbb S^{n-1}$ and picture a hyperplane with normal vector $v$. Raise its height in a continuous fashion, and record the Euler characteristic of the part of $M$ that the hyperplane is leaving behind. Doing so for a fixed $v$ will result in a curve (called an \textbf{Euler curve}); the collection of all such curves is the ECT (see Figure \ref{GenericECT}).

\begin{remark}
	\label{rmkECText}
	When convenient, we extend the usual definition of the ECT by allowing it to evaluate any $v \in \bb R^{n}$: the previous definition works equally well. It is easy to show that, for $v \in \bb R^n \setminus \{0\}$, $\ECT(M)(v,t)=\ECT(\frac{v}{\|v\|}, \frac{t}{\|v\|})$, so that the new Euler curves are but rescalings of the Euler curves given by directions on the sphere, and that for $v=0$, $\ECT(M)(0,t)=\chi(M)\bb 1_{[0,\infty)}(t)$.
\end{remark}

\begin{figure}[h]
	\centering
	\includegraphics[scale=0.5]{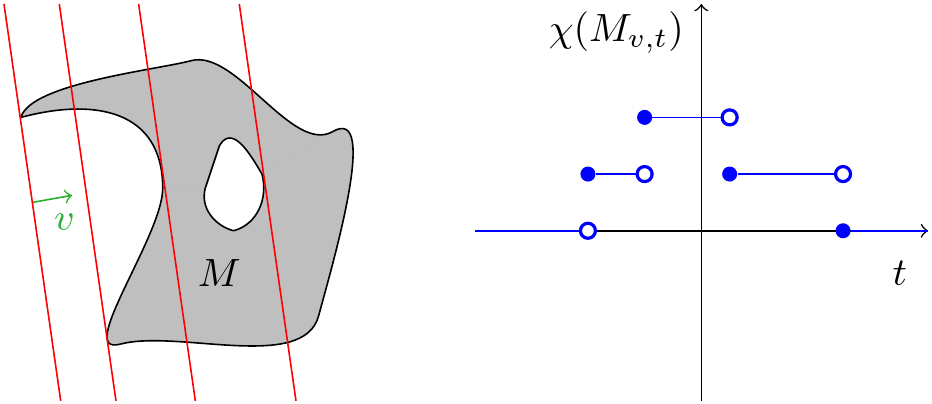}
	\caption{Euler curve of the grey shape $M$ at direction $v$. The red lines represent the heights at which there is a jump in the Euler characteristic of the sublevel sets.}
	\label{GenericECT}
\end{figure}

Perhaps surprisingly, the ECT encodes a shape without any information loss:
\begin{theorem}
	The ECT is inyective: given $M, N \subseteq \mathbb R^n$ compact and tame:
	\[ \ECT(M)=\ECT(N) \iff M=N. \]
\end{theorem}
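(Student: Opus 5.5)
The plan is to use Schapira's inversion formula from Euler calculus: the Radon transform with respect to Euler characteristic is invertible when the kernel is chosen suitably. The direction $M=N \Rightarrow \ECT(M)=\ECT(N)$ is trivial, so all the work is in showing that $\ECT(M)$ determines $M$. I would first pass from sublevel data to level (slice) data. For fixed $v$, the function $t\mapsto \ECT(M)(v,t)$ is constructible, with finitely many jumps by tameness, and right-continuous. Using additivity (\ref{AddEuler}), we have $\chi(M_{v,t}) - \lim_{s\to t^-}\chi(M_{v,s}) = \chi(M\cap\{\langle x,v\rangle=t\})$ minus the contribution of open pieces. More cleanly, by the conic structure theorem, for small $\epsilon$ the set $M_{v,t-\epsilon}$ is homotopy equivalent to $M\cap\{\langle x,v\rangle<t\}$. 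Its Euler characteristic with compact support then gives $\chi_c(M\cap\{\langle x,v\rangle = t\}) = \chi(M_{v,t})-\chi_c(M\cap\{\langle x,v\rangle<t\})$. The conclusion of this step is that the ECT determines the ``hyperplane Radon transform'' $\mathcal R\mathbb 1_M(v,t)=\chi(M\cap H_{v,t})$ for every affine hyperplane $H_{v,t}$.

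Next I would invoke Schapira's theorem on the Euler-calculus Radon transform. Take the incidence relation $S=\{(x,H): x\in H\}\subseteq \bb R^n\times \mathrm{AffGr}$ and the dual transform $\mathcal R^*$ integrating over all hyperplanes through a point. The fibers satisfy $\chi(\{H\ni x\})=\chi(\mathbb{RP}^{n-1})=\lambda$ and $\chi(\{H\ni x,y\})=\chi(\mathbb{RP}^{n-2})=\mu$ for $x\neq y$, with $\lambda\neq\mu$ (the two values are $1$ and $0$ alternating with parity). Applying Fubini for Euler integration then gives
\[ \mathcal R^*\mathcal R\,\mathbb 1_M=(\lambda-\mu)\mathbb 1_M+\mu\chi(M)\mathbb 1_{\bb R^n}. \]
Since $\chi(M)$ is itself read off from the ECT as $\ECT(M)(v,t)$ for $t$ large, $\mathbb 1_M$, and hence $M$, is recovered from $\ECT(M)$.

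The main obstacle I expect is the first step. Passing from sublevel Euler characteristics to exact slice Euler characteristics requires care with compactly supported versus homotopy Euler characteristic, and needs tameness (via the o-minimal trivialization theorem) to guarantee that one-sided limits exist and are attained. The second step requires verifying that the relevant fiber spaces are tame so that the Euler-calculus Fubini theorem applies, which is standard.

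If the paper allows, I would simply cite Turner–Mukherjee–Boyer, Ghrist–Levanger–Mai, and Curry–Mukherjee–Turner, which carry out exactly this argument.
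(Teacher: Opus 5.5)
Since the paper gives no proof of its own and only cites Ghrist--Levanger--Mai and Curry--Mukherjee--Turner, your overall strategy is the standard one. Your second step is correct: recover the hyperplane transform $H\mapsto\chi(M\cap H)$, then apply Schapira inversion. The fibres are $\mathbb{RP}^{n-1}$ and $\mathbb{RP}^{n-2}$, so $\lambda-\mu=\pm1$. The identity $\mathcal R^*\mathcal R\,\mathbb 1_M=(\lambda-\mu)\mathbb 1_M+\mu\chi(M)\mathbb 1_{\mathbb R^n}$ then recovers $M$, using $\chi(M)$ read off from the ECT at large $t$.

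Your first step, however, does not work as written. The conic-structure argument gives $M_{v,t-\epsilon}\simeq M\cap\{\langle x,v\rangle<t\}$, and hence only the \emph{homotopy} Euler characteristic of the open sublevel set. Your formula $\chi(M\cap H_{v,t})=\ECT(M)(v,t)-\chi_c(M\cap\{\langle x,v\rangle<t\})$, with $H_{v,t}=\{x:\langle x,v\rangle=t\}$, needs its \emph{compactly supported} Euler characteristic, and the two differ: for $[0,1)$ they are $1$ and $0$. This is not just a matter of care. The Euler curve in the single direction $v$ cannot determine slice Euler characteristics at all. In $\mathbb R^2$ with $v=e_2$, the point $\{(0,0)\}$ and the segment $\{0\}\times[0,1]$ both have Euler curve $\mathbb 1_{[0,\infty)}$, yet their slices at height $1/2$ have Euler characteristics $0$ and $1$.

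The missing idea is to use the opposite direction. The sets $M_{v,t}$ and $M_{-v,-t}$ are compact and tame, with union $M$ and intersection $M\cap H_{v,t}$. Additivity (\ref{AddEuler}) then gives
\[ \chi(M\cap H_{v,t})=\ECT(M)(v,t)+\ECT(M)(-v,-t)-\chi(M)\, , \]
or equivalently $\chi_c(M\cap\{\langle x,v\rangle<t\})=\chi(M)-\ECT(M)(-v,-t)$. The paper itself uses this identity in its remark on Mani's formula and in the proof of Lemma \ref{ConvexCurve}. It needs no conic structure or one-sided limits, and with it substituted for your first step the argument is complete.
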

For the proof, we refer the reader to \cite{ghrist2018persistent} or \cite{curry2022many}. Not only can we distinguish shapes by means of their ECT, but we can define a notion of distance between two shapes:
\begin{definition}
	\label{DefDistClassic}
	Let $M,N\subseteq \mathbb R^n$ be compact and tame so that $\chi(M)=\chi(N)$. We define their \textbf{(p,q)-ECT-induced distance}, with parameters $p, q \geq 1$ as:
	
	\[
		\dist{p}{q}(M,N)=\left(\int_{\bb S^{n-1}} \left( \int_\bb R |\ECT(M)(v,t)-\ECT(N)(v,t)|^p dt \right)^{\frac{q}{p}} dv\right)^\frac{1}{q}\, .
	\]
\end{definition}
\begin{remark}
	It can be proven that this is indeed a distance: the only non trivial property is that $M\neq N$ implies $\dist{p}{q}(M,N)\neq0$. This is true by virtue of the fact that, under tameness and compactness, Euler curves are step functions with a finite number of discontinuities --so two Euler curves that differ must do so in an interval--, and a known result about continuity of Euler curves as the direction parameter $v$ changes (see proposition 4.13 in \cite{curry2022many}). 
\end{remark}

The discriminating power of the ECT, in combination with good theoretical stability and approximation results (see for instance \cite{curry2022many}) have paved the way for the ECT and its many variants to find applications in several domains. These include shape classification \cite{turner2014persistent}, \cite{wang2021statistical}, analysis of black and white or grayscale medical images \cite{wang2023hypothesis}, \cite{crawford2020predicting}, or machine learning with geometric data \cite{nadimpalli2023euler}.

However, definition \ref{DefDistClassic} presents a clear disadvantage: it is only well defined when $\chi(M)=\chi(N)$, since if this is not the case, the innermost integrand does not equal $0$ at infinity. To overcome this, in many practical settings, the range $\mathbb R$ is cut short to $[-R,R]$ for a certain $R>0$ so that $M$ and $N$ are both contained in a ball with radius $R$. The chosen value of $R$ penalizes $M$ and $N$ on how much their Euler characteristics differ. However, apart from introducing an arbitrary parameter, this adds one additional artifact: the distance between two shapes $M$ and $N$ with distinct Euler characteristic may vary if both are translated by the same vector, as the next example shows.
\begin{example}
	Let $M$ be the unit disk and $N$ the unit circle. We have $\chi(M)=1$ and $\chi(N)=0$, so a cutoff radius is required: let us pick $R=2$. Notice that all proper sublevel sets of $M$ and $N$ have the same Euler characteristic, $1$, so their ECTs only differ for $t \geq 1$, where $\ECT(M)(v,t)$ takes value $1$ and $\ECT(N)(v,t)$ takes value $0$. It is then immediate that $\dist{p}{q}(M,N)=(2 \pi)^\frac{1}{q}$.
	
	Now, shift both $M$ and $N$ one unit to the left so they are still contained in the disk of radius 2. Some trigonometry now tells us that \[\ECT(M)(v,t)-\ECT(N)(v,t)=\begin{cases}
		0, t < 1-\cos v \\
		1, t \geq 1-\cos v
	\end{cases}.\] So, after the shift, we get that the new distance is $\left( \int_{\bb S^1} (1+\cos v)^\frac{q}{p} dv\right)^\frac{1}{q}$. Picking, for example, $q=2$ and $p=1$ and computing the integral, we get $(3\pi )^\frac{1}{2}$, which is certainly different from $(2\pi)^\frac{1}{2}$.
\end{example}
We will draw inspiration from convex geometry to modify definition \ref{DefDistClassic} in ways that solve this. 

\paragraph{Further properties of the ECT}
We end this brief introduction to the ECT by mentioning two particularly important facts that we will make use of:
\begin{prop}\label{add}
	The ECT is additive: given two tame, compact sets $M, N \subseteq \bb R^n$, we have, for all $v \in \bb S^{n-1}$ and $t \in \bb R$:
	\[
		\ECT(M)(v,t)+\ECT(N)(v,t)=\ECT(M \cup N)(v,t)+\ECT(M \cap N)(v,t)\, .
	\]
\end{prop}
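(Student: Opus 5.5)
The plan is to reduce the statement to the additivity of the Euler characteristic, equation \eqref{AddEuler}, applied to the sublevel sets. Fix $v \in \bb S^{n-1}$ and $t \in \bb R$, and write $H_{v,t}:=\{x \in \bb R^n \mid \langle x,v\rangle \leq t\}$, a closed tame half-space. By definition, $(M\cup N)_{v,t} = (M\cup N)\cap H_{v,t}$. Distributing the intersection over the union gives
\[
(M\cup N)_{v,t} = M_{v,t}\cup N_{v,t}.
\]
Similarly, by associativity and idempotence of intersection,
\[
(M\cap N)_{v,t} = (M\cap H_{v,t})\cap(N\cap H_{v,t}) = M_{v,t}\cap N_{v,t}.
\]

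Next I check the hypotheses needed to invoke \eqref{AddEuler}. By the remark following the definition of the ECT, $M_{v,t}$ and $N_{v,t}$ are compact and tame. Tame sets are closed under unions and intersections, and finite unions and intersections of compact sets are compact. Hence $M\cup N$ and $M\cap N$ are themselves compact tame shapes, so $\ECT(M\cup N)$ and $\ECT(M\cap N)$ are defined, and all four sets involved are compact and tame. Applying \eqref{AddEuler} to the pair $M_{v,t}, N_{v,t}$ then gives
\[
\chi(M_{v,t})+\chi(N_{v,t})=\chi(M_{v,t}\cup N_{v,t})+\chi(M_{v,t}\cap N_{v,t}).
\]
Rewriting each term with the two set identities above and the definition $\ECT(\cdot)(v,t)=\chi((\cdot)_{v,t})$ yields the claimed equality. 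Since $v$ and $t$ were arbitrary, the identity holds pointwise on all of $\bb S^{n-1}\times\bb R$.

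There is essentially no obstacle here. The only point requiring care is the empty set, which arises when $t$ is small. The convention $\chi(\emptyset)=0$ is consistent with \eqref{AddEuler}, since the o-minimal Euler characteristic is additive including on empty pieces, so no separate case is needed.
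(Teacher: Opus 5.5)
Your proof is correct and follows the paper's argument exactly: you establish $(M\cup N)_{v,t}=M_{v,t}\cup N_{v,t}$ and $(M\cap N)_{v,t}=M_{v,t}\cap N_{v,t}$, then invoke additivity of the Euler characteristic for the compact tame sublevel sets. Your extra checks are details the paper leaves implicit, and you handle them correctly: that $M\cup N$ and $M\cap N$ are again compact and tame, and that the convention $\chi(\emptyset)=0$ covers small $t$.
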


\begin{proof}
	We have:
	\begin{align*}
		(M\cup N)_{v,t}&=\{x \in M\cup N | \langle x, v\rangle\leq t\} \\
		&=\{ x \in M | \langle x, v\rangle\leq t\} \cup \{ x \in N | \langle x, v\rangle\leq t\} \\
		&=M_{v,t} \cup N_{v,t}
	\end{align*}
	and:
	\begin{align*}
		(M\cap N)_{v,t}&=\{x \in M\cap N | \langle x, v\rangle\leq t\} \\
		&=\{ x \in M | \langle x, v\rangle\leq t\} \cap \{ x \in N | \langle x, v\rangle\leq t\} \\
		&=M_{v,t} \cap N_{v,t}.
	\end{align*}
	The result then follows from the additivity formula for the Euler characteristic, equation (\ref{AddEuler}).
\end{proof}

In fact, this is a particular case of a more general fact we will not be using: under the Euler calculus viewpoint, the ECT is a \emph{linear} integral transform.

Finally, let us see how the ECT of a shape changes after applying an euclidean isometry. Recall that any isometry $T$ of $\bb R^n$ can be written as $T(x)=L(x)+a$, where $L$ is an orthogonal map and $a \in \bb R^n$.
\begin{prop}
	\label{isoformula}
	Let $M\subseteq \bb R^n$ be compact and tame, and let $T=L+a$ be an isometry of $\mathbb R^n$. Then, for all $v \in \bb S^{n-1}$ and $t \in \bb R$:
	\[ \ECT(T(M))(v,t)=\ECT(M)(L^{-1}(v), t-\langle a,v \rangle)\, . \]
\end{prop}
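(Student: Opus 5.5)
The plan is to compare the two sublevel sets directly: I will show that $T(M)_{v,t}$ is the image under $T$ of $M_{L^{-1}(v),\,t-\langle a,v\rangle}$. Since $T$ is a homeomorphism and the Euler characteristic is a topological invariant, the two sets then have the same Euler characteristic, which is exactly the claimed identity. Note also that $L$ is orthogonal, so $L^{-1}(v)\in\bb S^{n-1}$ and the right-hand side is a genuine evaluation of $\ECT(M)$ on the sphere.

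The key computation goes as follows. Every point of $T(M)$ can be written uniquely as $y=L(x)+a$ with $x\in M$. Then
\[ \langle y,v\rangle=\langle L(x),v\rangle+\langle a,v\rangle=\langle x,L^{T}(v)\rangle+\langle a,v\rangle=\langle x,L^{-1}(v)\rangle+\langle a,v\rangle, \]
using $L^{T}=L^{-1}$ for orthogonal $L$. Hence $\langle y,v\rangle\le t$ if and only if $\langle x,L^{-1}(v)\rangle\le t-\langle a,v\rangle$. This gives the set equality
\[ T(M)_{v,t}=T\bigl(M_{L^{-1}(v),\,t-\langle a,v\rangle}\bigr). \]

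To finish, I need two facts. First, the sets involved are compact and tame: an isometry is a polynomial, hence semialgebraic, map, so it preserves tameness and compactness. Second, in the compact tame setting $\chi$ agrees with the usual homotopy-invariant Euler characteristic, as noted in Section \ref{ECT}. Together these give $\chi(T(M)_{v,t})=\chi(M_{L^{-1}(v),\,t-\langle a,v\rangle})$.

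There is no serious obstacle here. The only point needing care is the transpose identity $\langle L(x),v\rangle=\langle x,L^{-1}(v)\rangle$, which is what places $L^{-1}$ rather than $L$ on the direction. The tameness bookkeeping is routine.
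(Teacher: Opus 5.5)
Your proof is correct and follows the paper's own argument: both prove $T(M)_{v,t}=T\bigl(M_{L^{-1}(v),\,t-\langle a,v\rangle}\bigr)$ using $\langle L(x),v\rangle=\langle x,L^{-1}(v)\rangle$, then conclude because $\chi$ is invariant under the homeomorphism $T$. Your explicit ``if and only if'', which gives both inclusions, and your tameness remark spell out points the paper leaves implicit.
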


\begin{proof}
	Note that since $L$ is orthogonal, $L^{-1}$ is well defined and sends vectors of $ \bb S^{n-1}$ to vectors of $\bb S^{n-1}$, so that the right-hand side is well defined. Now, given $y\in T(M)$ such that $\langle y,v \rangle\leq t$, we can write $y=T(x)=L(x)+a$ for an unique $x \in M$. Applying linearity of the dot product, we get the inequality $\langle L(x) , v\rangle +\langle a,v \rangle\leq t$, from which $\langle L(x), v\rangle \leq t-\langle a,v \rangle$. Finally, since orthogonal maps preserve dot products, we may apply $L^{-1}$ to both vectors in the left-hand side and get the same result, giving the inequality $\langle x, L^{-1}(v) \rangle \leq t-\langle a,v \rangle$ for an $x \in M$, which is the defining inequality for $M_{L^{-1}(v), t-\langle a,v \rangle}$. We have proven $T(M)_{v,t}=T(M_{L^{-1}(v), t-\langle a,v \rangle})$, and the result follows.
\end{proof}

As a consequence, we see that the issue of ECT-induced distances varying after applying a common isometry to both shapes is an artifact of the cutoff radius introduced for comparing shapes with distinct Euler characteristic:

\begin{theorem}
	ECT-induced distances are invariant under common isometry, that is, if $T$ is an isometry of $\bb R^n$ and $M,N$ are tame, compact subsets of $\bb R^n$ with the same Euler characteristic:
	\[ \dist{p}{q}(T(M), T(N))=\dist{p}{q}(M,N) \]
	for all $p, q \geq 1$.
\end{theorem}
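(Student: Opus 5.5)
The plan is to substitute the transformation formula of Proposition \ref{isoformula} into the definition of $\dist{p}{q}$ and then undo the isometry with two changes of variables. First note that $T(M)$ and $T(N)$ are again compact and tame, since $T$ is affine and hence semialgebraic. They are homeomorphic to $M$ and $N$, so $\chi(T(M))=\chi(T(N))$ and the left-hand side is well defined.

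Write $T=L+a$ and fix $v \in \bb S^{n-1}$. By Proposition \ref{isoformula},
\[ |\ECT(T(M))(v,t)-\ECT(T(N))(v,t)|^p = |\ECT(M)(L^{-1}(v),t-\langle a,v\rangle)-\ECT(N)(L^{-1}(v),t-\langle a,v\rangle)|^p . \]
The key point is that the \emph{same} shift $\langle a,v\rangle$ appears in both terms, because it depends only on $v$ and $a$, not on the shape.

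Integrate in $t$ over all of $\bb R$ and substitute $s=t-\langle a,v\rangle$. This is a translation, so Lebesgue measure is preserved and the integration range $\bb R$ is unchanged. This is exactly where having no cutoff $[-R,R]$ matters. The inner integral at $v$ therefore equals
\[ \int_\bb R |\ECT(M)(L^{-1}(v),s)-\ECT(N)(L^{-1}(v),s)|^p ds , \]
which is finite because the equal Euler characteristics make the integrand vanish outside a compact set. Raise it to $q/p$ and integrate over $\bb S^{n-1}$, then substitute $w=L^{-1}(v)$. Since $L$ is orthogonal, $L^{-1}$ restricts to a diffeomorphism of $\bb S^{n-1}$ preserving the standard surface measure; its Jacobian determinant has absolute value $1$, or one can invoke rotation invariance of the spherical measure directly. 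The outer integral thus becomes the one defining $\dist{p}{q}(M,N)$, and taking $1/q$-th powers finishes the proof.

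I expect the only real obstacle to be measure-theoretic bookkeeping. One must check that $v\mapsto \int_\bb R|\cdots|^p dt$ is measurable, so that the outer integral makes sense. This holds on both sides by the same argument, since the change of variables is a measurable bijection. One must also justify invariance of the spherical measure under $O(n)$, which is standard. No computation of the ECT itself is needed beyond Proposition \ref{isoformula}.
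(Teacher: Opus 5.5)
Your proposal is correct and follows essentially the same route as the paper: substitute Proposition \ref{isoformula}, then undo the isometry with the translation $t \mapsto t-\langle a,v\rangle$ in the inner integral and the orthogonal change of variables $v \mapsto L^{-1}(v)$ on the sphere. Your extra checks (well-definedness via $\chi(T(M))=\chi(T(N))$, finiteness of the inner integral, measurability) are harmless additions that the paper leaves implicit.
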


\begin{proof}
	By definition,
	\[\dist{p}{q}(T(M), T(N))=\left(\int_{\bb S^{n-1}} \left( \int_\bb R |\ECT(T(M))(v,t)-\ECT(T(N))(v,t)|^p dt \right)^{\frac{q}{p}} dv\right)^\frac{1}{q}.\]
	Setting $T=L+a$, we rewrite the ECTs of $T(M)$ and $T(N)$ via proposition \ref{isoformula}:
	\[\dist{p}{q}(T(M), T(N))=\left(\int_{\bb S^{n-1}} \left( \int_\bb R |\ECT(M)(L^{-1}(v),t-\langle a,v \rangle)-\ECT(N)(L^{-1}(v),t-\langle a,v \rangle)|^p dt \right)^{\frac{q}{p}} dv\right)^\frac{1}{q}.\]
	The changes of variables $\tilde t= t-\langle a,v \rangle$ and $\tilde v=L^{-1}(v)$ leave the integrals unchanged, as the first is a translation and the determinant of $L^{-1}$ is $\pm 1$. We get, then:
	\[\dist{p}{q}(T(M), T(N))=\left(\int_{\bb S^{n-1}} \left( \int_\bb R |\ECT(M)(\tilde v, \tilde t)-\ECT(N)(\tilde v,\tilde t)|^p d\tilde t \right)^{\frac{q}{p}} d\tilde v\right)^\frac{1}{q},\]
	which is $\dist{p}{q}(M,N) $.
\end{proof}

\subsection{Convex geometry and the support function}
\label{convex}
Convex geometry deals with \textbf{convex sets}, that is, subsets $K$ of $\bb R^n$ so that, for any pair of points $x,y \in K$, the line segment $\{\lambda x +(1-\lambda) y | \lambda\in [0,1]\}$ is also contained within $K$. In this paper, we will work with compact convex sets. Unless otherwise stated, results and proofs appearing in this section are taken from \cite{schneider2013convex}.

A recurring idea in convex geometry is finding functions that somehow encode the geometry of a convex set. The most well-known and easy to define is the support function.

\begin{definition}
	\label{SuppDef}
	Let $K\subseteq \bb R^n$ be a compact convex set. Its \textbf{support function} is the function $h_K: \bb R^n \to \bb R$ given by:
	\[ h_K(x)=\sup_{y \in K}\langle x,y \rangle\, . \]
\end{definition}

When $v \in \bb S^{n-1}$, $h_K(v)$ records the height of $K$ when seen from direction $v$, equivalently, the height of the last hyperplane normal to $x$ that touches $K$ (see figure \ref{SuppFig}). All other values of $h_K(x)$ are but rescalings of these. It is intuitively clear, and can be proven, that this fully determines $K$ when it is compact:

\begin{figure}
	\centering
	\includegraphics[scale = 0.4]{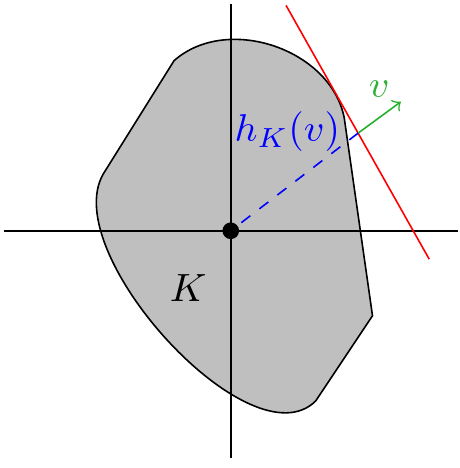}

	\caption{A convex set $K$ and the value of its support function $h_K$ at $v \in \bb S^{n-1}$. It is the height of the last hyperplane with outer normal vector $v$ that touches $K$.}
	\label{SuppFig}
\end{figure}

\begin{theorem}[Theorem 1.7.1 of \cite{schneider2013convex}]
	\label{SupportInjectiveConvex}
	The assignment $h_\bullet: K \mapsto h_K$ from compact, convex sets to functions from $\bb R^n$ to $\bb R$ is injective. Moreover, any subadditive function $f$ (that is, verifying $f(\lambda x)=\lambda f(x)$ and $f(x+y)\leq f(x)+f(y)$ for all $\lambda \geq 0$ and $x, y \in \bb R^n$) is the support function of some compact, convex set $K$. \qed
\end{theorem}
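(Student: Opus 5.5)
The plan has two halves: injectivity via a dual description of $K$, and existence by building the candidate set directly from $f$ and showing its support function is exactly $f$.

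\textbf{Injectivity.} I will show that a compact convex set is recovered from its support function as
\[ K=\{y\in\bb R^n \mid \langle x,y\rangle\leq h_K(x) \text{ for all } x\in\bb R^n\}\, . \]
The inclusion ``$\subseteq$'' is immediate from definition \ref{SuppDef}. For ``$\supseteq$'', take $y\notin K$. Since $K$ is compact and convex, strict separation of a point from a closed convex set gives a vector $u$ with $\langle u,y\rangle>\sup_{z\in K}\langle u,z\rangle=h_K(u)$. Hence $y$ fails one of the defining inequalities. Since the right-hand side depends only on $h_K$, the map $K\mapsto h_K$ is injective. Equivalently: if $K\neq L$, pick $y\in K\setminus L$ (swapping the roles if needed) and separate $y$ from $L$ to get $h_K(u)\geq\langle u,y\rangle>h_L(u)$.

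\textbf{Existence.} Let $f$ be finite, positively homogeneous and subadditive. Then $f(0)=0$ (take $\lambda=0$), and $f$ is convex, since $f(\lambda x+(1-\lambda)y)\leq \lambda f(x)+(1-\lambda)f(y)$. I define
\[ K=\{y \mid \langle x,y\rangle\leq f(x)\ \forall x\}\, . \]
This is an intersection of closed half-spaces, so it is closed and convex. It is bounded because a finite convex function on $\bb R^n$ is continuous, so $f\leq C$ on $\bb S^{n-1}$; taking $x=y/\|y\|$ then gives $\|y\|\leq C$ for every $y\in K$. By construction $h_K\leq f$.

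For the reverse inequality, fix $x_0$. The key step, and the one I expect to be the main obstacle, is producing a subgradient of $f$ at $x_0$, namely $y$ with
\[ f(x)\geq f(x_0)+\langle y,x-x_0\rangle \quad \text{for all } x\, . \]
I will obtain it from a supporting hyperplane to the epigraph of $f$ at $(x_0,f(x_0))$. The epigraph is convex with nonempty interior because $f$ is continuous, and the supporting hyperplane is non-vertical because $f$ is finite everywhere.

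Given such a $y$, homogeneity does the rest:
\begin{itemize}
\item Plugging in $x=0$ gives $\langle y,x_0\rangle\geq f(x_0)$.
\item Plugging in $x=2x_0$ gives $\langle y,x_0\rangle\leq f(x_0)$, so $\langle y,x_0\rangle=f(x_0)$.
\item Replacing $x$ by $\lambda x$, dividing by $\lambda$ and letting $\lambda\to\infty$ yields $f(x)\geq\langle y,x\rangle$ for all $x$, so $y\in K$.
\end{itemize}
Therefore $h_K(x_0)\geq\langle y,x_0\rangle=f(x_0)$. This also shows $K\neq\emptyset$, and we conclude $h_K=f$.
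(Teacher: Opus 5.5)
Your proof is correct and is the standard argument behind the cited Theorem 1.7.1 of Schneider, which the paper quotes without proof. You get injectivity from the representation of $K$ as the intersection of the halfspaces $\{y \mid \langle x,y\rangle\leq h_K(x)\}$, and existence from $K=\bigcap_x\{y \mid \langle x,y\rangle\leq f(x)\}$ together with a non-vertical supporting hyperplane to the epigraph of $f$ at $(x_0,f(x_0))$. Your evaluations at $x=0$ and $x=2x_0$ do the same job as the usual observation that the epigraph of a sublinear function is a cone with apex at the origin, so that hyperplane passes through the origin; your third bullet is redundant, since once $\langle y,x_0\rangle=f(x_0)$ the subgradient inequality already reads $f(x)\geq\langle y,x\rangle$.
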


\paragraph{The support function of unions, intersections, and sums} Note that the intersection of two convex sets is always convex, but the union only sometimes is (picture two disjoint convex sets, for instance). When a shape $M$ is not convex, we may want to work with the minimal convex set that contains it, that is, its \textbf{convex closure}. It is a classical result that it consists of all convex combinations of points in $M$. We will denote it by $\operatorname{cl}(M)$. This raises a natural question: can we give formulas for the support functions $h_{K \cap L}$ and $h_{\operatorname{cl}(K\cup L)}$ in terms of $h_K$ and $h_L$?

\begin{prop}
	\label{SuppCl}
	Let $K$ and $L$ be compact convex sets. Then, for all $x\in \bb R^n$:
	\[ h_{\operatorname{cl}(K \cup L)}(x)=\max\{ h_K(x), h_L(x) \}\, . \]
\end{prop}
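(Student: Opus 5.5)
We prove the two inequalities separately, working directly from Definition \ref{SuppDef} and the description of $\operatorname{cl}(K\cup L)$ as the set of convex combinations of points of $K\cup L$. Fix $x\in\bb R^n$.

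For the inequality $h_{\operatorname{cl}(K\cup L)}(x)\geq \max\{h_K(x),h_L(x)\}$, we use monotonicity of the support function under inclusion. Since $K\subseteq K\cup L\subseteq \operatorname{cl}(K\cup L)$, the supremum of $\langle x,y\rangle$ over the larger set is at least the supremum over $K$, and likewise for $L$.

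For the reverse inequality, take any $y\in\operatorname{cl}(K\cup L)$ and write it as a convex combination $y=\sum_i \lambda_i y_i$ with $y_i\in K\cup L$, $\lambda_i\ge 0$ and $\sum_i\lambda_i=1$. By linearity of the inner product,
\[
\langle x,y\rangle=\sum_i\lambda_i\langle x,y_i\rangle\le \sum_i\lambda_i \max\{h_K(x),h_L(x)\}=\max\{h_K(x),h_L(x)\},
\]
because each $y_i$ lies in $K$ or in $L$, so $\langle x,y_i\rangle$ is bounded by $h_K(x)$ or by $h_L(x)$. Taking the supremum over $y$ gives the claim.

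Two small points need care. First, we should check that the convex closure of $K\cup L$ is compact, so that it falls within the scope of Definition \ref{SuppDef}. Here one could cite Carathéodory: the convex hull of a compact set is compact. This is not essential, however, since the supremum formula makes sense for any bounded set. Second, we use the stated classical fact that $\operatorname{cl}(M)$ consists of all finite convex combinations of points of $M$. This is exactly what the second inequality needs. Neither point is a real obstacle; the whole argument is routine.
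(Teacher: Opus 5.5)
Your proof is correct and uses the same idea as the paper: linearity of $\langle x,\cdot\rangle$ over convex combinations. The paper instead uses the convexity of $K$ and $L$ to write each point of $\operatorname{cl}(K\cup L)$ as $\lambda y_1+(1-\lambda)y_2$ with $y_1\in K$, $y_2\in L$, then maximizes $\lambda h_K(x)+(1-\lambda)h_L(x)$ over $\lambda\in[0,1]$; your two-inequality argument with arbitrary finite convex combinations does not even need $K$ and $L$ to be convex.
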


\begin{proof}
	All points $y\in \operatorname{cl}(K \cup L)$ can be written as a convex combination of the form $\lambda y_1+(1-\lambda) y_2$, where $y_1 \in K$, $y_2 \in L$, and $\lambda \in [0,1]$. Thus, for any $x\in \bb R^n$, we have \[\langle x,y \rangle=\langle x,\lambda y_1+(1-\lambda) y_2 \rangle=\lambda \langle x,y_1\rangle +(1-\lambda) \langle x,y_2\rangle\, .\]
	Taking the supremum over $y$ on both sides, we see that
	\[ h_{\operatorname{cl}(K \cup L)}(x)=\sup_{\lambda \in [0,1]} \lambda h_K(x)+ (1-\lambda) h_L(x)\, .\]
	As the right hand term is just a convex combination of two numbers, that supremum will be reached either when $\lambda=0$ or $\lambda=1$, depending on whether $h_L(x)$ or $h_K(x)$ is greater. Thus, the maximum of both is always picked as the value for $h_{\operatorname{cl}(K \cup L)}$.
\end{proof}

\begin{coro}[Additivity]
	\label{SuppInt}
	Suppose $K$ and $L$ are compact, convex sets such that $K \cup L$ is also convex. Then, for all $x\in \bb R^n$
	\[ h_{K\cap L}(x)=\min\{h_K(x), h_L(x)\}\, , \]
	and in particular,
	\[ h_K+h_L=h_{K \cup L}+h_{K \cap L}\, . \]
\end{coro}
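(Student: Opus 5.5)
We prove the min formula first; the additivity identity then follows pointwise from it together with Proposition \ref{SuppCl}.

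\textbf{Inequality $\leq$.} Since $K\cap L\subseteq K$ and $K\cap L\subseteq L$, the supremum defining $h_{K\cap L}(x)$ runs over a smaller set. Hence $h_{K\cap L}(x)\leq \min\{h_K(x),h_L(x)\}$ for every $x$. If $K\cap L=\emptyset$ the support function would be $-\infty$, but convexity of $K\cup L$ rules this out: $K\cup L$ is connected, and $K$, $L$ are closed, so they must meet.

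\textbf{Inequality $\geq$.} This is the step where convexity of $K\cup L$ is essential. Fix $x$ and assume without loss of generality $h_K(x)\leq h_L(x)$. Pick $y_1\in K$ with $\langle x,y_1\rangle=h_K(x)$ and $y_2\in L$ with $\langle x,y_2\rangle=h_L(x)$; both exist by compactness.

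The segment $[y_1,y_2]$ lies in $K\cup L$ by convexity. Write it as the union of $[y_1,y_2]\cap K$ and $[y_1,y_2]\cap L$. These are two closed sets (closed subsegments, by convexity of $K$ and $L$) that cover a connected segment. The first contains $y_1$ and the second contains $y_2$, so they share some point $z\in K\cap L$.

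The map $y\mapsto\langle x,y\rangle$ is affine along the segment, so it takes at $z$ a value between $h_K(x)$ and $h_L(x)$; in particular $\langle x,z\rangle\geq h_K(x)$. This gives
\[ h_{K\cap L}(x)\geq \langle x,z\rangle\geq h_K(x)=\min\{h_K(x),h_L(x)\}. \]

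\textbf{Additivity.} Since $K\cup L$ is convex, it equals $\operatorname{cl}(K\cup L)$. Proposition \ref{SuppCl} then gives $h_{K\cup L}=\max\{h_K,h_L\}$. Adding this to the min formula and using $\max\{a,b\}+\min\{a,b\}=a+b$ pointwise yields
\[ h_K+h_L=h_{K\cup L}+h_{K\cap L}. \]

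The only delicate point is the intersection step in the second inequality. It needs $[y_1,y_2]\cap K$ and $[y_1,y_2]\cap L$ to be closed and to cover the connected segment, which is exactly what compactness and convexity of $K$ and $L$ provide.
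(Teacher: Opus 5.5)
Your proof is correct and follows the paper's argument. The upper bound comes from the inclusion $K\cap L\subseteq K,L$; the lower bound comes from a point of $K\cap L$ on the segment joining the maximizers $y_1\in K$, $y_2\in L$, where the linear functional is at least $\min\{h_K(x),h_L(x)\}$; and Proposition~\ref{SuppCl} then gives the formula for $h_{K\cup L}$. Your explicit connectedness step, that two nonempty closed subsets covering $[y_1,y_2]$ must meet, fills in the point the paper only asserts.
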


\begin{proof}
	Clearly, $h_{K \cap L}\leq \min\{h_K, h_L\}$ since the supremum defining the left-hand side is taken on $K \cap L$, contained in both $K$ and $L$. Now, for a given $x \in \bb R^n$, pick $y_1 \in K$ and $y_2 \in L$ realizing the supremums: $\langle x,y_1\rangle=h_K(x)$ and $\langle x,y_2 \rangle=h_L(x)$. Since $K \cup L$ is convex, all points in the line joining $y_1$ and $y_2$ are contained in either $K$ or $L$; moreover, since $y_1$ is in $K$ and $y_2$ is in $L$, at least one point $y=\lambda y_1+(1-\lambda) y_2$ in the line must lie in $K \cap L$. The point $y\in K \cap L$ verifies 
	\[\langle x, y\rangle=\langle x, \lambda y_1+(1-\lambda) y_2 \rangle=\lambda \langle x, y_1 \rangle +(1-\lambda) \langle x,y_2\rangle=\lambda h_K(x)+(1-\lambda) h_L(x) \, ,\]
	and so we obtain $\langle x, y \rangle\geq \min\{ h_K(x), h_L(x) \}$, thus $h_{K \cap L}(x) \geq \min\{h_K(x), h_L(x)\}$.
	
	The second formula follows, since $h_{K \cup L}=h_{\operatorname{cl}(K \cup L)}=\max\{h_K, h_L\}$ by proposition \ref{SuppCl}.
\end{proof}

We can also compute the support function of the sum of two convex sets, as defined below:
\begin{definition}
	Let $K$ and $L$ two convex sets. Their \textbf{Minkowski sum} $K+L$ is the convex set defined by all possible sums of points of $K$ with points of $L$, that is: 
	\[ K+L:=\{ k+l | k \in K, l \in L \}\, . \]
\end{definition}

\begin{prop}
	\label{MinkAdd}
	Let $K$ and $L$ two compact convex sets. Then,
	\[ h_{K+L}=h_K+h_L\, . \]
\end{prop}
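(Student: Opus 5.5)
We prove $h_{K+L}(x)=h_K(x)+h_L(x)$ for each fixed $x \in \bb R^n$ by establishing the two inequalities separately, working directly from definition \ref{SuppDef}.

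For the inequality $h_{K+L}(x)\leq h_K(x)+h_L(x)$, take an arbitrary point $y \in K+L$. By definition of the Minkowski sum we may write $y=k+l$ with $k\in K$ and $l \in L$. Linearity of the inner product gives
\[\langle x, y\rangle=\langle x,k\rangle+\langle x,l\rangle\leq h_K(x)+h_L(x).\]
Taking the supremum over $y\in K+L$ yields the inequality.

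For the reverse inequality, we use compactness. Each supremum $h_K(x)$ and $h_L(x)$ is attained, since $y\mapsto\langle x,y\rangle$ is continuous on a compact set. Pick $k_0\in K$ and $l_0\in L$ with $\langle x,k_0\rangle=h_K(x)$ and $\langle x,l_0\rangle=h_L(x)$. Then $k_0+l_0\in K+L$, so
\[h_{K+L}(x)\geq \langle x,k_0+l_0\rangle=h_K(x)+h_L(x).\]
Alternatively, we could avoid attainment and use an $\varepsilon$-approximation of each supremum.

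There is no real obstacle here. The only point needing care is that $h_{K+L}$ is well defined: $K+L$ is compact, being the image of the compact set $K\times L$ under the continuous addition map, and it is convex, since a convex combination of sums is a sum of convex combinations. This places $K+L$ within the setting of definition \ref{SuppDef}. The one substantive ingredient is that the supremum over $K+L$ splits as the sum of the separate suprema over $K$ and $L$, and this holds because $k$ and $l$ range independently.
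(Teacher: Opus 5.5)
Your proof is correct and follows the same approach as the paper, which says only that the result follows at once from $\langle x,k+l\rangle=\langle x,k\rangle+\langle x,l\rangle$. You have written out the two inequalities, the attainment of the suprema, and the compactness and convexity of $K+L$, all of which the paper leaves implicit.
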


\begin{proof}
	This follows at once from $\langle x, k+l \rangle=\langle x, k \rangle+\langle x, l \rangle$.
\end{proof}

The support function is useful in a great deal of scenarios: we'll outline below how it can be used to define distances between convex sets and its many integral formulas. We will give no proofs in the following.

\paragraph{$L^p$ norms on convex sets}
\label{LpNorms}

In a similar vein as the ECT distances defined earlier, one can define $L^p$ norms and distances between convex sets by means of their support functions:

\begin{definition}
	\label{LpNorm}
	Let $K$ be a compact, convex set, and let $p\in [1, \infty]$. Its \textbf{$L^p$ norm} is defined as
	\[ \|K\|_p:=\left(\int_{\bb S^{n-1}} |h_K(v)|^p dv\right)^\frac{1}{p}. \]
	Given two compact, convex sets $K,L$, their \textbf{$L^p$ distance} is similarly defined as 
	\[ \delta_p(K,L):=\left(\int_{\bb S^{n-1}} |h_K(v)-h_L(v)|^p dv\right)^\frac{1}{p}. \]
\end{definition}

\begin{remark}
	When $p=\infty$, we understand the formulas above as
	\[ \|K\|_\infty = \sup_{v \in \bb S^{n-1}} |h_K(v)| \]
	and 
	\[ \delta_\infty(K,L) = \sup_{v \in \bb S^{n-1}} |h_K(v)-k_L(v)|\, , \]
	as is custom, and we will do so in the remainder of the article.
\end{remark}
These distances were more thoroughly examined by Vitale in \cite{vitale1985lp}. We mention an important special case, relating the support function to the Hausdorff metric:
\begin{theorem}[Lemma 1.8.14 of \cite{schneider2013convex}]
	\label{HausdorffDist}
	The $\delta_\infty$ distance between two compact, convex sets matches with their Hausdorff distance, that is:
	\[ d_H(K,L)=\max_{v \in \bb S^{n-1}} |h_K(v)-h_L(v)|.\] 
\end{theorem}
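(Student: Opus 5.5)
We want to show that for compact convex $K,L\subseteq\bb R^n$,
\[ d_H(K,L)=\max_{v\in\bb S^{n-1}}|h_K(v)-h_L(v)|, \]
where $d_H(K,L)=\inf\{\varepsilon\ge 0 \mid K\subseteq L+\varepsilon B,\ L\subseteq K+\varepsilon B\}$ and $B$ is the closed unit ball. The plan is to express both inclusions in terms of support functions. Two ingredients do the work. Proposition \ref{MinkAdd} gives $h_{L+\varepsilon B}=h_L+\varepsilon h_B$. Directly from Definition \ref{SuppDef}, $h_B(v)=\|v\|$, so $h_B\equiv 1$ on the sphere.

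The key lemma is monotonicity: for compact convex sets $A,C$, we have $A\subseteq C$ if and only if $h_A\le h_C$ on $\bb S^{n-1}$, or equivalently on all of $\bb R^n$ by positive homogeneity.

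The forward direction is immediate, since the supremum defining $h_A$ is taken over a smaller set. For the converse, I will argue by contradiction using the separating hyperplane theorem. Suppose $a\in A\setminus C$. Let $c$ be the nearest point of $C$ to $a$, which exists and is unique by compactness and convexity. Set $v=(a-c)/\|a-c\|$. The standard obtuse-angle characterization of the nearest point gives $\langle y-c, v\rangle\le 0$ for all $y\in C$. Hence
\[ h_C(v)=\langle c,v\rangle<\langle a,v\rangle\le h_A(v), \]
which is the desired contradiction. This separation step is the only genuinely geometric input, and it is the step I expect to need the most care, specifically in verifying the projection inequality.

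Now put $\rho=\max_{v\in\bb S^{n-1}}|h_K(v)-h_L(v)|$. The maximum exists because support functions are continuous: they are convex and finite, or more simply $1$-Lipschitz with constant bounded by the radius of the set. For any $\varepsilon\ge0$, the lemma and the formula for $h_{L+\varepsilon B}$ give
\[ K\subseteq L+\varepsilon B \iff h_K\le h_L+\varepsilon \text{ on } \bb S^{n-1}. \]
The symmetric statement holds with $K$ and $L$ exchanged. Therefore both inclusions hold exactly when $|h_K-h_L|\le\varepsilon$ on the sphere, that is, exactly when $\varepsilon\ge\rho$. Taking the infimum over such $\varepsilon$ yields $d_H(K,L)=\rho$.

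One small check remains: $L+\varepsilon B$ must be compact and convex for the lemma to apply. This holds because it is the continuous image of the compact set $L\times B$ under addition, and a Minkowski sum of convex sets is convex.
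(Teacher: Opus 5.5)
Your proof is correct. The paper gives no argument of its own for this statement; it simply cites Schneider. Your route is essentially the standard proof of the cited lemma: monotonicity $A\subseteq C\iff h_A\le h_C$ via the metric projection, combined with $h_{L+\varepsilon B}=h_L+\varepsilon$ on $\mathbb S^{n-1}$, so that the admissible $\varepsilon$ are exactly $[\rho,\infty)$.

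Two small touch-ups:
\begin{itemize}
\item State explicitly that $K$ and $L$ are nonempty. This is needed for the nearest point $c$ to exist and for the support functions to be finite.
\item In the continuity remark, say that $h_K$ is Lipschitz with constant $\max_{y\in K}\|y\|$, rather than ``1-Lipschitz''.
\end{itemize}
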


\paragraph{Integral formulas for the support function}

A great deal of geometric invariants of convex sets can be obtained via integrals where the support function plays a role. For our purposes, we single out three of them, referring the reader to \cite{schneider2013convex} for proofs and further formulas. The first is quite trivial: notice that, given a direction $v \in \bb S^{n-1}$, the value $h_K(v)+h_K(-v)$ is precisely the width of $K$ when seen from direction $v$, as we are just adding the (signed) distances to the origin of the lower-most and higher-most points of $K$ when seen from direction $v$. Thus:
\begin{theorem}
	\label{meanwidth}
	The \textbf{mean width} of a compact convex set $K \subseteq \bb R^n$ is given by
	\[ \overline W(K)=\frac{2}{\mu(\bb S^{n-1})}\int_{\bb S^{n-1}} h_K(v) dv \, ,\]
	where $\mu(\bb S^{n-1})$ is the surface area of the $(n-1)$-sphere. \qed
\end{theorem}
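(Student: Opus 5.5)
Let $w_K(v):=h_K(v)+h_K(-v)$ denote the width of $K$ in direction $v$; the statement says precisely that the mean width is the average of $w_K$ over the sphere. So the plan is a short computation. First we identify $w_K(v)$ geometrically with the width, then we average over $\bb S^{n-1}$, and finally we use the symmetry of the surface measure under the antipodal map to collapse $h_K(v)+h_K(-v)$ into $2h_K(v)$.

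\textbf{Step 1.} Fix $v\in\bb S^{n-1}$. By definition \ref{SuppDef}, $h_K(v)=\max_{y\in K}\langle y,v\rangle$, where the maximum is attained because $K$ is compact. Similarly,
\[ -h_K(-v)=-\max_{y\in K}\langle y,-v\rangle=\min_{y\in K}\langle y,v\rangle . \]
The projection $\langle K,v\rangle$ is a compact interval, by convexity and compactness. Its endpoints are therefore $-h_K(-v)$ and $h_K(v)$, so its length is $w_K(v)=h_K(v)+h_K(-v)$. This is the width of $K$ in direction $v$, namely the distance between the two supporting hyperplanes with normal $v$.

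\textbf{Step 2.} The mean width is, by definition, the average of $w_K$ over the sphere with respect to the normalized surface measure:
\[ \overline W(K)=\frac{1}{\mu(\bb S^{n-1})}\int_{\bb S^{n-1}} \bigl(h_K(v)+h_K(-v)\bigr)\,dv . \]
The support function is sublinear, hence convex and continuous, so all of these integrals are finite.

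\textbf{Step 3.} The antipodal map $v\mapsto -v$ is an orthogonal map, so it preserves the surface measure on $\bb S^{n-1}$. This is the same fact used when changing variables by $L^{-1}$ in the proof of isometry invariance of ECT distances. Substituting $\tilde v=-v$ gives
\[ \int_{\bb S^{n-1}} h_K(-v)\,dv=\int_{\bb S^{n-1}} h_K(\tilde v)\,d\tilde v . \]
Adding this to the first term yields the factor $2$ in the statement.

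The only point needing care is definitional. We must fix what ``mean width'' means: the normalized spherical average of the width, as in \cite{schneider2013convex}. We must also justify that the width equals $h_K(v)+h_K(-v)$. Both are handled by Step 1. There is no genuine obstacle beyond these conventions.
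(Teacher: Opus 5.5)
Your proposal is correct and follows essentially the same route as the paper, which justifies the statement only by the remark preceding it: $h_K(v)+h_K(-v)$ is the width of $K$ in direction $v$, so averaging over $\mathbb{S}^{n-1}$ gives the mean width. You additionally make explicit the antipodal change of variables that produces the factor $2$, which the paper leaves implicit.
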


Secondly, one can relate the support function to a certain distinguished point in a convex set, which will take a central role in this paper. We first give a loose geometric definition of it follows:
\begin{definition}
	Let $K$ be a compact convex set. Its \textbf{Steiner curvature centroid} or simply \textbf{Steiner point} is defined as the center of mass of its boundary when weighed by its Gaussian curvature, that is:
	\[ s(K):= \frac{\int_{\partial K} xC_0(x) dx}{\int_{\partial K} C_0(x) dx}\, , \]
	where $C_0(x)$ denotes the Gaussian curvature of $\partial K$ at the point $x$.
\end{definition}
\begin{remark}
	
	When $K$ is a convex polygon in the plane, $s(K)$ is given by the weighted sum of all vertices of the polygon, weighted by their external angles. The Steiner point is also sometimes called the \emph{Steiner curvature centroid} to distinguish it from another distinguished point particular to triangles which shares the same name.
\end{remark}

The Steiner point can be computed in another, simpler, way by means of the support function. This is the definition we will keep in mind for our purposes.

\begin{theorem}
	\label{SteinerIntegralFormula}
		Let $K\subseteq \bb R^n$ be a compact convex set. Then,
		\[ s(K)=\frac{1}{\mu(\bb B^{n})}\int_{\bb S^{n-1}} v \cdot h_K(v) dv\, , \]
		where $\mu(\bb B^{n})$ is the volume of the unit $n$-ball.
\end{theorem}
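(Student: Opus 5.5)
We must show that the curvature centroid $s(K)$ coincides with $\frac{1}{\mu(\bb B^{n})}\int_{\bb S^{n-1}} v\, h_K(v)\, dv$. The plan is to prove it first for smooth convex bodies and then pass to the general case. The approach rests on the Gauss map together with the gradient of the support function.

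\textbf{Step 1 (smooth, strictly convex case).} Assume $\partial K$ is $C^2$ with positive Gaussian curvature. Then the Gauss map $\nu:\partial K\to\bb S^{n-1}$ is a diffeomorphism with Jacobian $C_0$. Changing variables gives
\[ \int_{\partial K} x\,C_0(x)\,dx=\int_{\bb S^{n-1}} \nu^{-1}(v)\,dv, \qquad \int_{\partial K} C_0(x)\,dx=\mu(\bb S^{n-1}). \]
By the standard fact that $h_K$ is differentiable on $\bb R^n\setminus\{0\}$ with $\nabla h_K(v)=\nu^{-1}(v)$ (the unique maximizer of $\langle v,\cdot\rangle$ on $K$), we get
\[ s(K)=\frac{1}{\mu(\bb S^{n-1})}\int_{\bb S^{n-1}}\nabla h_K(v)\,dv . \]

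\textbf{Step 2 (integration on the sphere).} Decompose $\nabla h_K(v)=\langle \nabla h_K(v),v\rangle v+\nabla_S h(v)$, where $\nabla_S h$ is the spherical gradient. By Euler's identity for $1$-homogeneous functions, the radial part equals $h_K(v)\,v$. For the tangential part, fix a unit vector $e$ and apply the divergence theorem on $\bb S^{n-1}$:
\[ \int \langle \nabla_S h,e\rangle=\int\langle\nabla_S h,\nabla_S\langle v,e\rangle\rangle=-\int h\,\Delta_S\langle v,e\rangle=(n-1)\int h\,\langle v,e\rangle, \]
using that linear functions are spherical harmonics of degree one, so $\Delta_S\langle v,e\rangle=-(n-1)\langle v,e\rangle$. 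Hence
\[ \int\nabla h_K=n\int v\,h_K(v)\,dv. \]
Since $\mu(\bb S^{n-1})=n\,\mu(\bb B^n)$, the claimed formula follows.

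\textbf{Step 3 (general case).} Approximate an arbitrary compact convex $K$ in the Hausdorff metric by smooth strictly convex bodies $K_j$, for instance $K+\epsilon\bb B^n$ suitably regularized. By Theorem \ref{HausdorffDist}, $h_{K_j}\to h_K$ uniformly, so the right-hand side converges. The left-hand side must be interpreted for nonsmooth $K$ through curvature measures, e.g. the vertex-weighting by exterior angles for polygons, and this interpretation is continuous under Hausdorff convergence because curvature measures are weakly continuous. The main obstacle is exactly this step: making the ``loose'' definition of $s(K)$ precise for nonsmooth bodies and justifying the limit. The cleanest route is to define the left-hand side in general as $\frac{1}{\mu(\bb S^{n-1})}\int_{\bb S^{n-1}}\nabla h_K$, which exists almost everywhere since $h_K$ is convex, and then check that it reproduces the polygon formula directly. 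In the plane, $\nabla h_K$ is constant, equal to a vertex, on each normal arc, so the integral is the exterior-angle-weighted sum of the vertices, consistent with the remark.
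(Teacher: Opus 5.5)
Your proof is correct, and it goes further than the paper does. The paper gives no proof of this statement. It imports it from Schneider's book, where (in Schneider's treatment) the integral $\frac{1}{\mu(\bb B^n)}\int_{\bb S^{n-1}} v\,h_K(v)\,dv$ is itself the definition of $s(K)$, and the curvature-centroid description is a derived property expressed through the curvature measure $C_0(K,\cdot)$. The paper calls the curvature formula a ``loose'' definition and uses the integral as its working one. So you are proving an equivalence the paper simply quotes.

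Your smooth case is sound: the Gauss map has Jacobian $C_0$, $\nabla h_K=\nu^{-1}$, and $\int_{\bb S^{n-1}}\nabla h_K=n\int_{\bb S^{n-1}}v\,h_K$ follows from $\Delta_S\langle\cdot,e\rangle=-(n-1)\langle\cdot,e\rangle$. Your approximation step is also complete once $C_0$ is read as a curvature measure: $C_0(K,\cdot)$ is weakly continuous in $K$, and support functions converge uniformly.

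There are two small points to fix.
\begin{itemize}
\item $K+\epsilon\bb B^n$ is only $C^{1,1}$ and generally has vanishing Gaussian curvature on large pieces, for instance translated facets when $K$ is a polytope. You need a genuine $C^\infty_+$ approximation, which is standard, rather than just a parallel body.
\item In your ``cleanest route,'' Step 2 has only been established for $C^2$ support functions.
\end{itemize}

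Both issues disappear if you replace the spherical-harmonic computation with the divergence theorem on the ball, which holds for the Lipschitz function $h_K$. The gradient $\nabla h_K$ is $0$-homogeneous and exists $\mathcal H^{n-1}$-a.e.\ on $\bb S^{n-1}$, because the non-differentiability set is a Lebesgue-null cone. Hence
\[ \int_{\bb S^{n-1}} v\,h_K(v)\,dv=\int_{\bb B^n}\nabla h_K(x)\,dx=\frac{1}{n}\int_{\bb S^{n-1}}\nabla h_K(v)\,dv . \]
Moreover, $C_0(K,\cdot)$ is the image of spherical Lebesgue measure under the a.e.-defined map $u\mapsto\nabla h_K(u)$, which sends $u$ to the unique point of the support set. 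Together these give the formula for every compact convex $K$, lower-dimensional ones included, with no smoothing and no limiting argument.
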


Finally, there are a couple interesting integral formulas involving the support function that hold in the plane. For starters, it is a remarkable fact of convex geometry that, in the plane
\begin{equation}
	\label{PeriSupport}
\int_{\bb S^{1}} h_K(v) dv =\Peri(K)\, ,
\end{equation}
where ``$\Peri$'' denotes the perimeter. Equating this with the general result from theorem \ref{meanwidth}, we get the following:

\begin{coro}[Barbier's theorem]
	\label{Barbier}
	Let $K\subseteq \bb R^2$ be a compact convex set in the plane. Then,
	\[ \Peri(K)=\pi \overline W(K)\, . \]
	In particular, all flat shapes of constant width $w$ have the same perimeter, $\pi w$.
\end{coro}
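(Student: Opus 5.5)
The corollary follows by equating two integral expressions for $\int_{\bb S^1} h_K(v)\,dv$. One is the planar identity (\ref{PeriSupport}). The other is the mean width formula of theorem \ref{meanwidth} with $n=2$. In that case $\mu(\bb S^1)=2\pi$, so
\[ \overline W(K)=\frac{2}{2\pi}\int_{\bb S^1} h_K(v)\,dv=\frac{1}{\pi}\int_{\bb S^1} h_K(v)\,dv\, . \]
Substituting (\ref{PeriSupport}) on the right-hand side gives $\overline W(K)=\Peri(K)/\pi$, which is the claimed formula $\Peri(K)=\pi\overline W(K)$.

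For the constant width statement, I would first recall why theorem \ref{meanwidth} holds. The width of $K$ in direction $v$ is $w_K(v)=h_K(v)+h_K(-v)$. Integrating over the circle and using the change of variables $v\mapsto -v$, which preserves the measure on $\bb S^1$, we get
\[ \int_{\bb S^1} w_K(v)\,dv=2\int_{\bb S^1} h_K(v)\,dv\, , \]
so $\overline W(K)$ is exactly the average of $w_K$ over $\bb S^1$. If $K$ has constant width $w$, then $w_K\equiv w$, hence $\overline W(K)=w$, and the first part gives $\Peri(K)=\pi w$.

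The only nontrivial ingredient is the identity (\ref{PeriSupport}), which is taken as given. If one wanted to justify it independently, the natural route is polygons. For a convex polygon, $\int_{\bb S^1} h_K$ splits over the normal cones of the vertices. A direct computation, or equivalently Cauchy's projection formula, shows that each edge contributes exactly its length. The general case then follows by approximating $K$ in the Hausdorff metric, since $h_K$ converges uniformly by theorem \ref{HausdorffDist} and the perimeter is continuous on compact convex sets. Degenerate $K$, such as a segment, where the perimeter is counted as twice the length, are covered by the same limiting argument.
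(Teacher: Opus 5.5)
Your proof is correct and is the paper's argument: the paper obtains the corollary by equating the planar identity (\ref{PeriSupport}) with the mean width formula of theorem \ref{meanwidth} at $n=2$, where $\mu(\bb S^1)=2\pi$. The constant width case then follows because $\overline W(K)$ is the average of $w_K$ over the circle. Your optional justification of (\ref{PeriSupport}) goes beyond the paper, which simply cites that identity from the convex geometry literature; it is sound, via each edge contributing its length plus Hausdorff approximation.
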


Finally, under some differentiability assumptions on the boundary on $K$, we can relate the support function of a convex set in the plane with its area and curvature.

\begin{theorem}[Equations (4) and (5) from \cite{malagoli2009optimal}]
	\label{SupportArea}
	Let $K\subseteq \bb R^2$ be a compact convex set such that $h_K(v)$ is twice differentiable over $\bb S^{n-1}$, for each $v \in \bb S^{n-1}$ there is a unique point $x(v) \in \partial K$ such that $h_K(v)=x(v)$, and $\partial K$ has a well defined radius of curvature $\rho(v)$ at each point $x(v)$. Then,
	\[ h_K(v)+h_K''(v)=\rho(v)\, , \]
	and
	\[ \operatorname{Area}(K)=\frac{1}{2}\int_{\bb S^1} h_K(v) \rho(v) dv\, .\]
\end{theorem}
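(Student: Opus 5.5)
The statement to prove is the last one displayed, Theorem \ref{SupportArea}: if $h_K$ is twice differentiable, then $h_K+h_K''=\rho$ and $\operatorname{Area}(K)=\frac12\int_{\bb S^1}h_K\rho\,dv$. The plan is to parametrize the circle by angle, $v(\theta)=(\cos\theta,\sin\theta)$, and write $h(\theta)=h_K(v(\theta))$. Let $x(\theta)\in\partial K$ be the unique point with $\langle x(\theta),v(\theta)\rangle=h(\theta)$. Throughout I read the hypothesis "$h_K(v)=x(v)$" as "$x(v)$ attains $h_K(v)$".

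\textbf{Step 1: an explicit formula for the boundary point.} Set $v'(\theta)=(-\sin\theta,\cos\theta)$. The claim is that
\[ x(\theta)=h(\theta)v(\theta)+h'(\theta)v'(\theta). \]
For fixed $\theta_0$, consider $g(\theta)=h(\theta)-\langle x(\theta_0),v(\theta)\rangle$. By the definition of the support function, $g\geq 0$ everywhere, and $g(\theta_0)=0$. So $\theta_0$ is a minimum of the differentiable function $g$, which gives $g'(\theta_0)=0$, that is, $h'(\theta_0)=\langle x(\theta_0),v'(\theta_0)\rangle$. Since $\{v,v'\}$ is an orthonormal basis, the two components $\langle x,v\rangle=h$ and $\langle x,v'\rangle=h'$ determine $x(\theta)$, giving the formula. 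This is the envelope argument.

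\textbf{Step 2: the radius of curvature.} Differentiate the formula from Step 1, using $v''=-v$:
\[ x'(\theta)=h'v+hv'+h''v'-h'v=(h+h'')\,v'(\theta). \]
The boundary is therefore traced with unit tangent $v'(\theta)$, whose turning angle is exactly $\theta$, and with arclength speed $|h+h''|$. The radius of curvature is $ds/d\theta$, so $\rho=h+h''$. Convexity gives $\rho\geq0$, since the tangent direction turns monotonically.

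\textbf{Step 3: the area.} Use the Green-type formula $\operatorname{Area}(K)=\frac12\oint_{\partial K}\langle x,\nu\rangle\,ds$, where $\nu$ is the outer normal. Here $\nu=v(\theta)$, $\langle x(\theta),v(\theta)\rangle=h(\theta)$ and $ds=\rho\,d\theta$, so
\[ \operatorname{Area}(K)=\tfrac12\int_0^{2\pi}h\rho\,d\theta. \]
Substituting Step 2 also gives the equivalent form $\frac12\int(h^2-h'^2)$ after integrating by parts.

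\textbf{Main obstacle.} The delicate step is justifying that $\theta\mapsto x(\theta)$ parametrizes all of $\partial K$ once and with the correct orientation, so that Green's theorem applies. Uniqueness of $x(v)$ means $K$ has no edges. Vertices, where $\rho=0$, are handled automatically because corners contribute zero length. Regularity of the curve follows from $h\in C^2$ via Step 1, and monotonicity of the normal angle follows from convexity. I would take the curve $x(\theta)$, argue that it is a closed, rectifiable, positively oriented curve whose image is $\partial K$, and then apply Green's theorem in its Lipschitz form.
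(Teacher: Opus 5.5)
The paper gives no proof of this statement. It is quoted from \cite{malagoli2009optimal} (equations (4) and (5)), so your proposal supplies an argument the paper outsources. Your route is the standard support-parametrization derivation, and it is correct. The envelope argument gives $x(\theta)=hv+h'v'$. Differentiating gives $x'=(h+h'')v'$, so $\rho=ds/d\theta=h+h''$. The divergence theorem, or equivalently $\frac12\int_0^{2\pi}\det(x,x')\,d\theta=\frac12\int_0^{2\pi}h(h+h'')\,d\theta$, then gives the area.

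It also fits how the paper later uses the result. Your Step 1 is exactly the fact $\nabla h_K(v)=x(v)$ that the paper imports from \cite{schneider2013convex} (Corollary 1.7.3) in the proof of theorem \ref{Area}. Your closing identity $\operatorname{Area}(K)=\frac12\int(h^2-h'^2)$ is the one derived there, so your argument would make that subsection self-contained. It even shows the uniqueness hypothesis on $x(v)$ is redundant: applying the envelope argument to an arbitrary maximizer $y$ forces $\langle y,v'\rangle=h'$, so differentiability of $h_K$ alone determines the maximizer. The citation buys brevity; your proof shows which hypotheses are actually used.

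One thing to tighten: the hypothesis is that $h$ is twice differentiable, not $C^2$ as you wrote. So $h''$ may be discontinuous or even unbounded, and $\theta\mapsto x(\theta)$ need not be Lipschitz. This is easily repaired:
\begin{enumerate}
\item $h+h''\ge 0$ holds at every point. It is the second derivative at $t=0$ of the convex function $t\mapsto h_K(v+tv')$, with $h_K$ viewed as the convex $1$-homogeneous function on $\mathbb R^2$ from definition \ref{SuppDef}. This is a sharper justification than ``the tangent turns monotonically''.
\item Hence $F(\theta)=h'(\theta)+\int_0^\theta h$ is nondecreasing and differentiable everywhere. Its derivative is therefore integrable, and an everywhere-differentiable function with $L^1$ derivative is absolutely continuous. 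So $h'$ and $x$ are absolutely continuous.
\item This legitimizes $ds=\rho\,d\theta$, the change of variables in the boundary integral, and the integration by parts $\int hh''=-\int h'^2$. Alternatively, reparametrize by arclength before invoking the Lipschitz form of Green's theorem.
\end{enumerate}
The surjectivity and orientation issue you flag is then short. Every $p\in\partial K$ has an outer normal $v(\theta)$ and so equals $x(\theta)$. Since $x'=\rho v'$ with $\rho\ge 0$, $\partial K$ is traversed exactly once counterclockwise, with $x$ constant only on the normal arcs of corners.
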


\begin{remark}
	All these results are special cases of many more, involving quantities in convex geometry that generalize the volume and surface area, dubbed \emph{mixed volumes} (some special cases of these involve so-called \emph{quermassintegrals}). We have only mentioned the few that appear to have a straightforward connection with the ECT and familiar geometric features.
\end{remark}

\subsection{Valuations and extending them beyond convex sets}
\label{valuations}

The support function, as written in definition \ref{SuppDef}, does not require convexity to be defined. This begs the question of whether its obvious extension (just replicating its definition) over a greater class of shapes, is a ``good'' extension, in some sense. To illustrate how an extension like this can sometimes be uninteresting, let us first take a look at a simpler function over convex sets: the Euler characteristic. For a compact convex set $K$, its Euler characteristic is defined as:
\[ \chi(K)=\begin{cases}
	1, \text{ if } K \neq \emptyset \\
	0, \text{ if } K = \emptyset
\end{cases}. \]
This agrees with the topological Euler characteristic, as any nonempty convex set is contractible. If we just extend this definition to a greater class of shapes, we end up with a somewhat trivial function, that only checks if a set is empty or not.

A more interesting approach is the following: we first observe that, for two nonempty convex sets, \emph{if $K \cup L$ is convex}, then $K \cap L$ must be nonempty (otherwise, $K \cup L$ would consist of two disjoint pieces), and thus the equation \[\chi(K)+\chi(L)=\chi(K \cup L)+\chi(K \cap L)\] holds, as both sides equal two. This additivity formula will also hold if either or both of $K$ and $L$ are empty. This formula is summarized by saying that the Euler characteristic is a \textbf{valuation} over convex sets. This is a very desirable property, and giving an extension of the Euler characteristic to a greater family of sets while preserving this formula is simple: given $K$ and $L$ convex, \emph{whether or not their union is convex}, we define:
\[ \chi(K \cup L):=\chi(K) +\chi(L) -\chi(K \cap L)\, .\]
So, for instance, the union of two disjoint convex sets has Euler characteristic $2$. The extension to unions of three sets is done similarly, however, one must check two facts: that the definition is independent of the decomposition into convex sets given, and that a general additivity formula holds. A theorem by Groemer \cite{groemer1978extension} asserts the following:
\begin{theorem}[Groemer]
	\label{Groemer}
	Let $\lambda$ be a valuation over a class $\mathcal S$ of sets that is closed under intersections. Then, $\lambda$ admits a unique additive extension to the class of finite unions of members of $\mathcal S$ if and only if $\lambda$ is generally additive, that is, if the equation
	\[ \lambda(X_1 \cup ... \cup X_m)=\sum \lambda (X_i) -\sum_{j<i} \lambda (X_j \cap X_i)+\sum_{i<j<k} \lambda(X_i \cap X_j \cap X_k) -\dots \]
	holds whenever $X_1,...,X_m$ and $X_1 \cup ... \cup X_m$ are in $\mathcal S$.
\end{theorem}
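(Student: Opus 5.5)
We prove both directions of Groemer's theorem, taking $\mathcal U(\mathcal S)$ to be the class of finite unions of members of $\mathcal S$. Necessity is immediate: if $\lambda$ extends additively to $\mathcal U(\mathcal S)$, then the inclusion--exclusion formula holds for every finite union. This follows by induction on $m$ from the two-set additivity, applied to $(X_1\cup\dots\cup X_{m-1})\cup X_m$ together with the distributive identity $(X_1\cup\dots\cup X_{m-1})\cap X_m=\bigcup_i (X_i\cap X_m)$. In particular the formula holds whenever the $X_i$ and their union lie in $\mathcal S$. So the content of the theorem is sufficiency and uniqueness.

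For uniqueness, note that any additive extension $\tilde\lambda$ must satisfy inclusion--exclusion on every finite union, by the induction just described. Every intersection $X_{i_1}\cap\dots\cap X_{i_k}$ lies in $\mathcal S$, because $\mathcal S$ is closed under intersections. Hence the value of $\tilde\lambda(X_1\cup\dots\cup X_m)$ is forced to be
\[ \sum_{\emptyset\neq I\subseteq\{1,\dots,m\}} (-1)^{|I|+1}\lambda\Big(\bigcap_{i\in I} X_i\Big). \]
For existence, we define $\tilde\lambda$ on a union $U=X_1\cup\dots\cup X_m$ by exactly this formula.

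The main obstacle is showing that this definition does not depend on the chosen decomposition. Suppose $U=X_1\cup\dots\cup X_m=Y_1\cup\dots\cup Y_k$. We pass to the common refinement $\{X_i\cap Y_j\}$, whose members all lie in $\mathcal S$. It suffices to show that the formula computed from $(X_i)$ equals the formula computed from $(X_i\cap Y_j)_{i,j}$; by symmetry the same holds for $(Y_j)$, and the two values then agree.

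To compare them, fix an index set $I$. Each $X_I:=\bigcap_{i\in I}X_i$ lies in $\mathcal S$ and is the union of the members $X_I\cap Y_j$ of $\mathcal S$. The general additivity hypothesis therefore gives
\[ \lambda(X_I)=\sum_{\emptyset\neq J} (-1)^{|J|+1}\lambda\big(X_I\cap Y_J\big). \]
Substituting this into the formula for $(X_i)$ produces a double alternating sum over pairs $(I,J)$. We then expand the formula for the refinement $(X_i\cap Y_j)$, which runs over nonempty subsets $P\subseteq\{1,\dots,m\}\times\{1,\dots,k\}$. The term for $P$ is $\lambda(X_{\pi_1 P}\cap Y_{\pi_2 P})$, where $\pi_1,\pi_2$ are the coordinate projections. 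Grouping the terms by $(I,J)=(\pi_1 P,\pi_2 P)$ reduces the comparison to a combinatorial identity:
\[ \sum_{P:\ \pi_1P=I,\ \pi_2P=J}(-1)^{|P|+1}=(-1)^{|I|+|J|}. \]
I would verify this by Möbius inversion over subsets, or by computing $\sum_{P\subseteq I\times J}(-1)^{|P|}$ with inclusion--exclusion on the projection conditions. This is the only genuinely technical step.

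With well-definedness established, additivity of $\tilde\lambda$ on $\mathcal U(\mathcal S)$ follows directly. Given $U=\bigcup X_i$ and $V=\bigcup Y_j$, we use the decompositions $U\cup V=\bigcup X_i\cup\bigcup Y_j$ and $U\cap V=\bigcup_{i,j}(X_i\cap Y_j)$, both of which are legitimate by well-definedness. Splitting the inclusion--exclusion sum for $U\cup V$ according to whether each index subset meets only the $X$'s, only the $Y$'s, or both gives $\tilde\lambda(U)+\tilde\lambda(V)-\tilde\lambda(U\cap V)$. The mixed terms are exactly $-\tilde\lambda(U\cap V)$, again by the same sign-counting identity. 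Finally, $\tilde\lambda$ extends $\lambda$, as one sees by taking $m=1$.
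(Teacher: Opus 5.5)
The paper gives no proof of this statement. It quotes it from \cite{groemer1978extension} and uses it as a black box, so there is no internal argument to match. Your proof is correct and complete.

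The one step you leave to the reader, the sign-counting identity, does hold. For nonempty $I,J$, apply inclusion--exclusion over the sets $A\subseteq I$, $B\subseteq J$ of rows and columns that $P$ avoids:
\[
\sum_{\substack{P\subseteq I\times J\\ \pi_1P=I,\ \pi_2P=J}}(-1)^{|P|}
=\sum_{\substack{A\subseteq I,\ B\subseteq J\\ A=I\ \text{or}\ B=J}}(-1)^{|A|+|B|}
=-(-1)^{|I|+|J|}.
\]
The first equality holds because $\sum_{P\subseteq Q}(-1)^{|P|}$ vanishes unless $Q=\emptyset$. The second holds because $\sum_{A\subseteq I}(-1)^{|A|}=\sum_{B\subseteq J}(-1)^{|B|}=0$ for nonempty $I,J$. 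This gives exactly the coefficient $(-1)^{|I|+|J|}$ you need, both in the well-definedness step and in the mixed terms of the additivity step.

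Your route differs from the classical proof of Groemer's theorem, which works in the span of indicator functions. There the real work is showing that $\sum_i a_i\mathbb{1}_{X_i}=0$ forces $\sum_i a_i\lambda(X_i)=0$. Then $\lambda$ induces a linear functional $f\mapsto\int f\,d\lambda$ on $\mathcal S$-simple functions, and one sets $\tilde\lambda(U)=\int\mathbb{1}_U\,d\lambda$. Well-definedness is then automatic, and additivity follows at once from $\mathbb{1}_{U\cup V}+\mathbb{1}_{U\cap V}=\mathbb{1}_U+\mathbb{1}_V$.

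You stay at the level of sets instead. Via the common refinement $\{X_i\cap Y_j\}$, well-definedness reduces to one application of general additivity to each $X_I=\bigcup_j(X_I\cap Y_j)$, plus a single combinatorial identity that you reuse for additivity. Your version is more elementary and fully self-contained. The classical one yields more: the integral on simple functions, which is the Euler-calculus viewpoint the paper alludes to when it calls the ECT a linear integral transform.
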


It can be seen that the Euler characteristic is indeed generally additive, as is the support function. Both valuations then admit a unique extension to the family of finite unions of compact convex sets. The general additivity formula allows one to evaluate a generally additive valuation when a decomposition of the space into convex sets is known. In addition, we can generalize many constructions of the last section, like, say, the Steiner point, to this greater class of shapes, and they too will verify additivity formulas. In the next section, we will provide some explicit formulas for the extension of the support function.
	
	\section{Extensions of convex-geometric constructions via the ECT}
\label{SectExt}
\subsection{Extending the support function}

It is hopefully clear how both the ECT and the support function encode similar kinds of information. Here we make the connection between these two concepts explicit:

\begin{definition}
	\label{DefSupport}
	Let $M\subseteq \bb R^n$ be compact and tame. Its \textbf{support function} is defined as the function $h_M: \bb R^n \to \bb R$ given by:
	\[ h_M(x)=\int_{\bb R} \left(\ECT(M)\left(-x,t\right)-\chi(M)\bb 1_{[0, +\infty)}(t)\right) dt\, .\]
\end{definition}

\begin{lemma}
	\label{SuppMatchesConvex}
	The support function of a tame, compact convex set $K$ agrees with its usual definition.
\end{lemma}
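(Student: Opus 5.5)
The plan is a direct computation: for a convex set, every sublevel set is again convex, so each Euler curve is a single step function whose jump sits exactly at the value of the classical support function. Integrating that step against $\chi(K)\bb 1_{[0,+\infty)}$ then returns that value with the correct sign.

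First I will dispose of the trivial cases. If $K=\emptyset$, then $\chi(K)=0$ and every sublevel set is empty, so the integrand vanishes and $h_K\equiv 0$. This agrees with the usual definition under the standard convention for an empty supremum. If $x=0$, Remark \ref{rmkECText} gives $\ECT(K)(0,t)=\chi(K)\bb 1_{[0,\infty)}(t)$, so the integrand is identically zero. This matches $h_K(0)=\sup_{y\in K}\langle 0,y\rangle=0$.

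Now let $K\neq\emptyset$ and $x\neq 0$, and write $h=\sup_{y\in K}\langle x,y\rangle$ for the classical support function. By compactness this supremum is attained. Working directly from the definition of the sublevel set, which makes sense for any vector by Remark \ref{rmkECText}, I get
\[
K_{-x,t}=\{y\in K \mid \langle y,-x\rangle\le t\}=\{y\in K\mid \langle y,x\rangle\ge -t\}.
\]
This set is the intersection of $K$ with a closed half-space, hence compact and convex. It is nonempty exactly when $-t\le h$, i.e. when $t\ge -h$. Since nonempty compact convex sets are contractible, $\ECT(K)(-x,t)=\bb 1_{[-h,\infty)}(t)$. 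Also $\chi(K)=1$, so the integrand of Definition \ref{DefSupport} equals $\bb 1_{[-h,\infty)}-\bb 1_{[0,\infty)}$.

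It remains to integrate this difference, splitting on the sign of $h$. If $h\ge 0$, the integrand is $1$ on $[-h,0)$ and $0$ elsewhere, so the integral is $h$. If $h<0$, the integrand is $-1$ on $[0,-h)$ and $0$ elsewhere, so the integral is again $h$. In both cases the integral equals $\sup_{y\in K}\langle x,y\rangle$, as required.

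I do not expect a genuine obstacle. The only points needing care are the sign conventions: the use of $-x$ in the definition and the resulting orientation of the step. Whether the endpoints are open or closed is irrelevant, since they form a set of measure zero.
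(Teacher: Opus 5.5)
Your proof is correct and is the paper's argument: sublevel sets of $K$ are empty or compact convex, so the Euler curve in direction $-x$ is $\bb 1_{[-h_K(x),\infty)}$, and splitting on the sign of $h_K(x)$ gives the integral $h_K(x)$. Two small points. Your separate treatment of $x=0$ is already covered by the general case. For $K=\emptyset$, the usual convention is $\sup\emptyset=-\infty$, so $h_\emptyset=0$ is the valuation convention rather than an empty-supremum one; the paper tacitly assumes $K$ is nonempty.
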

\begin{proof}
	All sublevel sets of a compact convex set $K$ are either empty, or are themselves compact and convex, being the intersection of two closed convex sets, one of them compact. Thus:
	\[\ECT(K)(-x,t)=\begin{cases}
		0, \text{ if } \forall y \in K, t< \langle -x, y \rangle \\
		1, \text{ if } \exists y \in K \text{ s.t. }t \geq \langle -x, y \rangle
	\end{cases}=\begin{cases}
		0, \text{ if } t< -h_K(x)  \\
		1, \text{ if } t\geq -h_K(x)
	\end{cases}.\]
	If $h_K(x)\geq 0$, the integrand is only non zero in the interval $[-h_K(x), 0]$, taking value $1$, and if $h_K(x)<0$, it will be nonzero in the interval $[0,-h_K(x)]$, taking value $-1$. In either case the integral evaluates to $h_K(x)$.
\end{proof}

\begin{prop}
	\label{SuppAddExt}
	The support function defined above is an additive extension of the support function to the class of compact, tame $n$-euclidean shapes. Moreover, it restricts to the unique extension of the support function to finite unions of tame, compact, convex sets.
\end{prop}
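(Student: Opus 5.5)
The plan is to prove three things: that $h_M$ is well defined, that it is additive, and that it restricts correctly to convex sets and their finite unions.

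\emph{Well-definedness.} Fix $x \in \bb R^n$. Since $M$ is compact, there is some $R>0$ such that the sublevel set $M_{-x,t}$ is empty for $t<-R$ and equals $M$ for $t\geq R$. Hence the integrand $\ECT(M)(-x,t)-\chi(M)\bb 1_{[0,+\infty)}(t)$ vanishes outside $[-R,R]$. Inside that interval it is a bounded step function with finitely many jumps, by tameness. So the integral is finite and $h_M(x)$ is defined for every $x$. Note that this also covers $x=0$, using Remark \ref{rmkECText}: there the integrand is identically zero, consistent with $h_K(0)=0$.

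\emph{Additivity.} Take compact tame $M,N$ and apply Proposition \ref{add} at direction $-x$ (extended to arbitrary vectors as in Remark \ref{rmkECText}; the proof of Proposition \ref{add} does not use $\|v\|=1$). This gives
\[\ECT(M)(-x,t)+\ECT(N)(-x,t)=\ECT(M\cup N)(-x,t)+\ECT(M\cap N)(-x,t).\]
By \eqref{AddEuler}, the subtracted terms satisfy the same relation:
\[\chi(M)\bb 1_{[0,\infty)}+\chi(N)\bb 1_{[0,\infty)}=\chi(M\cup N)\bb 1_{[0,\infty)}+\chi(M\cap N)\bb 1_{[0,\infty)}.\]
Subtracting and integrating over $t$, using linearity of the integral and the compact supports established above, yields
\[h_M+h_N=h_{M\cup N}+h_{M\cap N}.\]
One point needs care: $M\cap N$ may be empty. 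In that case we set $h_\emptyset=0$, which is consistent with the definition since both terms of the integrand vanish.

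\emph{Restriction to convex sets and uniqueness.} By Lemma \ref{SuppMatchesConvex}, $h_M$ agrees with the classical support function whenever $M$ is compact, convex and tame. The main subtlety is that a general compact convex set need not be tame, so the extension here is only defined on tame ones. I would therefore read the uniqueness claim as a statement about the class $\mathcal S$ of compact, convex, tame sets; for instance, $\mathcal S$ contains all polytopes, and that is the relevant setting. This class is closed under intersections, and finite unions of its members are again compact and tame, so $h$ is defined on all of them.

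Groemer's theorem \ref{Groemer} states that if the classical support function is generally additive on $\mathcal S$, then it has a unique additive extension to finite unions of members of $\mathcal S$. The paper asserts this general additivity just after Theorem \ref{Groemer}. It also follows directly from our additive $h$: additivity on pairs implies the full inclusion–exclusion formula by induction on $m$, applying the pairwise formula to $(X_1\cup\dots\cup X_{m-1})\cup X_m$ and distributing the intersection over the union.

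Our $h$ is additive on the whole lattice of compact tame sets and coincides with the classical support function on $\mathcal S$. Its restriction to finite unions is therefore an additive extension. Any additive extension is forced on a union $X_1\cup\dots\cup X_m$ by the inclusion–exclusion formula, whose right-hand side involves only intersections of the $X_i$, which lie in $\mathcal S$. So the restriction of $h$ must be the unique extension guaranteed by Groemer.
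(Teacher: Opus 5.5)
Your proposal is correct and follows the same route as the paper. Additivity comes from Proposition \ref{add} together with equation (\ref{AddEuler}) and linearity of the integral, agreement with the classical support function from Lemma \ref{SuppMatchesConvex}, and uniqueness on finite unions of tame compact convex sets from Groemer's theorem \ref{Groemer}. The extra points you check are sound and only make explicit what the paper's short proof leaves implicit: finiteness of the integral, the convention $h_\emptyset=0$, and the direct inclusion--exclusion argument for uniqueness.
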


\begin{proof}
	We have just seen that the definition matches with the support function for convex sets; it only remains to see that it is additive. This follows immediately from additivity of the ECT (proposition \ref{add}) and of the Euler characteristic (equation (\ref{AddEuler})) along with linearity of the integral. The extension when restricted to unions of convex sets is unique by theorem \ref{Groemer}.
\end{proof}

\begin{remark}
	There are two useful ways one can think of the term $\chi(M)\bb 1_{[0,+\infty)}(t)$ appearing in the integral. The first is the value $\ECT(M)(0,t)$, as we have stated in remark \ref{rmkECText}. The second one starts by noticing that $\bb 1_{[0, +\infty)}(t)=\ECT(a)(v,t)$ for all $v\in \bb R^n$ and $t\in \bb R$, where $a$ is a point set at the origin. Thus, we can interpret the integrand as the difference of ECTs of $M$ and of a ``weighted point'' set at the origin, with weight equal to the Euler characteristic of $M$. Equivalently, we could picture a very small copy of $M$ set at the origin.
\end{remark}

\begin{remark}
Another explicit formula for the additive extension of the support function to the class of unions of convex sets was given by Mani in \cite{mani_angle_1971} and studied further by Hadwiger and Schneider in \cite{hadwiger1971vektorielle}:
\[ h_M(x)=\sum_{t \in \mathbb R}  t (\chi(M \cap H_{x, t})-\lim_{\mu \to t^+} \chi(M \cap H_{x, t}))\, ,\]
where $H_{x, t}=\{ y \in \bb R^n | \la x,y\ra = t \}$. This can be seen to be equal to our definition. (To see this, write $\chi(M \cap H_{v,t})=\chi(M_{v,t})+\chi(M_{-v,-t})-\chi(M)$ by additivity and apply right-continuity of Euler curves. The result is easily related to the integral in definition \ref{DefSupport}.) 
\end{remark}

We could, at this point, take inspiration from the $L^p$ distances defined in section \ref{LpNorms} and naively define support distances between two shapes by the same procedure. However, the same issue we had for the classical ECT-induced distance between shapes with distinct Euler characteristic arises: distances change whenever the same linear isometry is applied to both compared shapes. This time, the issue is different: by subtracting $\chi(M) \bb 1_{[0, \infty)}(t)$ from each Euler curve, we have singled out the origin as a special point. We can do better than this by defining a central point for a shape, and taking that as a reference instead of the origin. Since we will be moving shapes around to realize this strategy, let us see how the support function changes under isometries:

\begin{prop}
	\label{IsoSupport}
	Let $M\subseteq \bb R^n$ be a compact, tame, shape, and let $T=L+a$ be an an euclidean isometry. Then, for all $x \in \bb R^n$:
	\[ h_{T(M)}(x)=h_M(L^{-1}(x))+\chi(M) \la x, a \ra \, . \]
\end{prop}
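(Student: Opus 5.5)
The plan is a direct computation from Definition \ref{DefSupport}: rewrite the Euler curves of $T(M)$ in terms of those of $M$ using Proposition \ref{isoformula}, then change variables in the $t$-integral. Two preliminary observations are needed. First, $\chi(T(M))=\chi(M)$, since $T$ is a homeomorphism. Second, Proposition \ref{isoformula} is stated only for $v\in\bb S^{n-1}$, but the definition of $h$ evaluates the ECT at $-x$ for arbitrary $x\in\bb R^n$, as in Remark \ref{rmkECText}. The proof of Proposition \ref{isoformula} never uses $\|v\|=1$: it only uses linearity of the inner product and $\la L(y),v\ra=\la y,L^{-1}(v)\ra$. So I will note that
\[ \ECT(T(M))(v,t)=\ECT(M)(L^{-1}(v),t-\la a,v\ra) \]
holds verbatim for every $v\in\bb R^n$, including $v=0$.

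Applying this with $v=-x$, and using $L^{-1}(-x)=-L^{-1}(x)$, gives
\[ h_{T(M)}(x)=\int_{\bb R}\left(\ECT(M)(-L^{-1}(x),t+\la a,x\ra)-\chi(M)\bb 1_{[0,\infty)}(t)\right)dt. \]
Next I substitute $s=t+\la a,x\ra$, which is a translation and so does not change the integral. Then I add and subtract $\chi(M)\bb 1_{[0,\infty)}(s)$ inside the integrand. This splits the integral into two pieces. The first piece is exactly $h_M(L^{-1}(x))$. The second piece is
\[ \chi(M)\int_{\bb R}\left(\bb 1_{[0,\infty)}(s)-\bb 1_{[\la a,x\ra,\infty)}(s)\right)ds. \]
Checking the two signs of $\la a,x\ra$ separately, exactly as in the proof of Lemma \ref{SuppMatchesConvex}, this second piece equals $\chi(M)\la a,x\ra$. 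That is the claimed formula.

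The only step that needs real care is justifying the split of the integral into two. Each piece must be integrable on its own, not just their sum. I will argue this from compactness. If $M\subseteq B(0,R)$, then for $w=-L^{-1}(x)$ the Euler curve $\ECT(M)(w,s)$ vanishes for $s<-R\|x\|$ and equals $\chi(M)$ for $s\geq R\|x\|$. Tameness makes it a step function with finitely many jumps. Hence $\ECT(M)(w,s)-\chi(M)\bb 1_{[0,\infty)}(s)$ is bounded and compactly supported. The same holds for the difference of the two indicator functions, which is supported between $0$ and $\la a,x\ra$. So linearity of the integral applies and the computation is rigorous. The case $x=0$ is trivial and can be checked separately as a sanity check: both sides vanish.
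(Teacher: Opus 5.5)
Your proposal is correct and follows the paper's proof essentially step for step: apply Proposition \ref{isoformula} at $-x$, shift $t$ by $\la x,a\ra$, and split off the integral of the difference of the two indicator functions, which equals $\la x,a\ra$. Your extra remarks fill in details the paper leaves implicit: that Proposition \ref{isoformula} holds for all $v\in\bb R^n$, that $\chi(T(M))=\chi(M)$, and that each piece is separately integrable.
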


\begin{proof}
	Using proposition \ref{isoformula} and introducing the change of variables $\tilde t = t+\la x,a \ra$, we write
	\begin{align*} h_{T(M)}(x) &= \int_{\bb R} (\ECT(M)(-L^{-1}(x), t+\la x,a \ra)-\chi(M)\bb 1_{[0, +\infty)}(t)) dt \\
	& = \int_{\bb R} (\ECT(M)(-L^{-1}(x), \tilde t)-\chi(M)\bb 1_{[\la x,a \ra, +\infty)}(\tilde t)) d\tilde t \\
	& = \int_{\bb R} (\ECT(M)(-L^{-1}(x), \tilde t)-\chi(M)(\bb 1_{[0, +\infty)}(\tilde t)+f(\tilde t)) d\tilde t\\
	& = \int_{\bb R} (\ECT(M)(-L^{-1}(x), \tilde t)-\chi(M)\bb 1_{[0, +\infty)}(\tilde t)) d \tilde t-\chi(M)\int_{\bb R}f(\tilde t) d\tilde t \, , \end{align*}
	where $f(\tilde t)= \begin{cases}
		\bb 1_{[\la x,a \ra, 0)}(\tilde t), \text{ if } \la x,a \ra<0 \\
		-\bb 1_{[0,\la x,a \ra)}(\tilde t), \text{ if } \la x, a\ra >0
	\end{cases}$. The integral of $f$ over $\bb R$ is obviously $-\la x, a\ra$, yielding the result.
\end{proof}

\subsection{The (weighted) Steiner point}
Having defined the support function, it is a simple matter to define the Steiner point of a shape $M$ by taking to heart the formula given in theorem \ref{SteinerIntegralFormula}. However, we will introduce a change: dividing the result by the Euler characteristic of the shape.

\begin{definition}
	Let $M\subseteq \bb R^n$ be compact and tame with $\chi(M) \neq 0$. Its \textbf{weighted Steiner point} or simply \textbf{Steiner point} is defined by:
	\[ s(M):=\frac{1}{\chi(M) \mu(\bb B^{n})} \int_{\bb S^{n-1}} v \cdot h_M(v) dv \, . \]
	If $\chi(M)=0$, we set its Steiner point as the origin. 
\end{definition}

\begin{remark}
	\label{RemarkUnweighted}
	Although our main focus will be the \emph{weighted} Steiner point, it also will be useful to us to consider the \textbf{\emph{unweighted} Steiner point}, defined for any compact, tame shape $M$ as:
	\[ \frac{1}{\mu(\bb B^n)} \int_{\bb S^{n-1}} v \cdot h_M(v) dv\, .\]
	Since for $\chi(M) \neq 0$ this is just $\chi(M)s(M)$, we will denote this unweighted point by $\chi(M)s(M)$, keeping in mind that if $\chi(M)=0$, we actually mean the result of the integral above.
\end{remark}

Setting the Steiner point as the origin for a shape of Euler characteristic zero is an arbitrary choice which will amount to nothing for practical purposes, as will become clear shortly. 
The reason for dividing by the Euler characteristic is so we have the following:

\begin{prop}
	\label{SteinerIsometry}
	Let $M$ be compact and tame with $\chi(M)\neq 0$. If $T$ is an euclidean isometry, then
	\[ s(T(M))=T(s(M)). \]
\end{prop}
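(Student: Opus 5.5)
We substitute the isometry formula for the support function, Proposition \ref{IsoSupport}, directly into the definition of the weighted Steiner point, and then split the resulting integral into a rotational part and a translational part. Write $T=L+a$ with $L$ orthogonal. Since $T$ is a homeomorphism, $\chi(T(M))=\chi(M)\neq 0$, so $s(T(M))$ is given by the integral formula rather than the convention for $\chi=0$. By Proposition \ref{IsoSupport},
\[
\chi(M)\mu(\bb B^n)\, s(T(M))=\int_{\bb S^{n-1}} v\, h_M(L^{-1}(v))\,dv+\chi(M)\int_{\bb S^{n-1}} v\,\la v,a\ra\, dv\, ,
\]
so it suffices to show that the first integral equals $L\big(\chi(M)\mu(\bb B^n)s(M)\big)$ and the second equals $\mu(\bb B^n)\,a$.

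For the first term we change variables $w=L^{-1}(v)$, so $v=L(w)$. Since $L$ is orthogonal, this is a measure-preserving diffeomorphism of $\bb S^{n-1}$. Linearity of $L$ then lets us pull it out of the vector-valued integral:
\[
\int_{\bb S^{n-1}} L(w)\, h_M(w)\,dw=L\Big(\int_{\bb S^{n-1}} w\, h_M(w)\,dw\Big)=L\big(\chi(M)\mu(\bb B^n)s(M)\big).
\]

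The second term is the one genuine computation, namely the identity
\[
\int_{\bb S^{n-1}} v\,\la v,a\ra\,dv=\mu(\bb B^n)\,a .
\]
We will prove it componentwise. The $i$-th component is $\sum_j a_j\int_{\bb S^{n-1}} v_iv_j\,dv$. For $i\neq j$ the integral vanishes by the symmetry $v_i\mapsto -v_i$. For $i=j$, rotational symmetry gives $\int v_i^2\,dv=\frac1n\int\sum_k v_k^2\,dv=\frac{\mu(\bb S^{n-1})}{n}$. Finally, the standard relation $\mu(\bb S^{n-1})=n\,\mu(\bb B^n)$, obtained by integrating in polar coordinates, shows that this equals $\mu(\bb B^n)$. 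As a consistency check, this is exactly Theorem \ref{SteinerIntegralFormula} applied to the point $\{a\}$, whose support function is $\la \cdot,a\ra$ and whose Steiner point is $a$.

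Dividing through by $\chi(M)\mu(\bb B^n)$ gives $s(T(M))=L(s(M))+a=T(s(M))$. The only delicate point is the normalization constant in the last identity, and it is precisely this step that requires dividing by $\chi(M)$. Without that division the translation would enter with the factor $\chi(M)$, and the unweighted point would not be equivariant under translations.
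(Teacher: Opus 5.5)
Your proof is correct and follows the paper's own argument: you substitute Proposition~\ref{IsoSupport}, change variables $\tilde v = L^{-1}(v)$ and pull $L$ out of the first integral, then use $\int_{\bb S^{n-1}} v\,\la v,a\ra\,dv=\mu(\bb B^n)\,a$. The only difference is that you actually carry out this last computation, via $\int_{\bb S^{n-1}} v_iv_j\,dv=\delta_{ij}\,\mu(\bb S^{n-1})/n$ and $\mu(\bb S^{n-1})=n\,\mu(\bb B^n)$, whereas the paper simply calls it ``a standard computation''.
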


\begin{proof}
	By definition, and using proposition \ref{IsoSupport},
	\begin{align*} s(T(M))&=\frac{1}{\chi(T(M)) \mu(\bb B^n)} \int_{\bb S^{n-1}} v\cdot h_{T(M)}(v) dv \\
	&=\frac{1}{\chi(M) \mu(\bb B^n)} \int_{\bb S^{n-1}} v\cdot (h_M(L^{-1}(v))+\chi(M)\la v,a\ra) dv \\
	&=\frac{1}{\chi(M) \mu (\bb B^n)} \left(\int_{\bb S^{n-1}} v \cdot h_M(L^{-1}(v)) dv +\chi(M)\int_{\bb S^{n-1}} v \cdot \la v,a \ra dv\right).
	 \end{align*} 
	 By introducing the linear change of variables $\tilde v:=L^{-1}v$ on the first integral, we can rewrite it as $\int_{\bb S^{n-1}} L(\tilde v \cdot h_M(\tilde v)) d\tilde v$, and since $L$ is linear, it can be taken out of the integral, yielding $\chi(M)\mu(\bb B^n)L(s(M))$. The other integral is a standard computation and equals $\mu(\bb B^n)a$. All in all:
	 \[ s(T(M))=\frac{1}{\chi(M) \mu(\bb B^n)}(\chi(M) \mu(\bb B^n)L(s(M))+\chi(M)\mu(\bb B^n)a)=L(s(M))+a=T(s(M)). \]
\end{proof}

\begin{example}[The Steiner point of a point cloud]
	With a similar computation as above, one can check that the Steiner point of a point cloud --that is, a finite number of isolated points-- is its usual center of mass. Thus, the weighted Steiner point is a generalization of both the Steiner point of a convex set, and of the center of mass of a point cloud.
\end{example}

The Steiner point also verifies a weighted version of the additivity formula:

\begin{prop}
	\label{SteinerWeightAdd}
	For $M,N\subseteq \bb R^n$ compact and tame:
	\[ \chi(M)s(M)+\chi(N)s(N)=\chi(M \cup N)s(M \cup N)+\chi(M \cap N)s(M \cap N)\, ,\]
	where we are using the convention of remark \ref{RemarkUnweighted} if any shape has Euler Characteristic zero.
\end{prop}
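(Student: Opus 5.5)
The key observation is that, with the convention of remark \ref{RemarkUnweighted}, the quantity $\chi(M)s(M)$ is in every case (whether or not $\chi(M)=0$) equal to the unweighted Steiner point
\[ \chi(M)s(M)=\frac{1}{\mu(\bb B^n)}\int_{\bb S^{n-1}} v\cdot h_M(v)\, dv\, .\]
Indeed, if $\chi(M)\neq 0$ this is just the definition of $s(M)$ multiplied through by $\chi(M)$. If $\chi(M)=0$, the convention says precisely that we mean the integral. So the first step is to rewrite all four terms of the claimed identity as integrals of the form above, for $M$, $N$, $M\cup N$ and $M\cap N$.

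Once this is done, the statement reduces to additivity of the extended support function. By proposition \ref{SuppAddExt}, for every $v\in\bb S^{n-1}$ we have
\[ h_M(v)+h_N(v)=h_{M\cup N}(v)+h_{M\cap N}(v)\, . \]
This itself comes from additivity of the ECT (proposition \ref{add}) together with additivity of $\chi$ applied to the correction term $\chi(\cdot)\bb 1_{[0,\infty)}$. We multiply this identity by $v$, integrate over $\bb S^{n-1}$ componentwise, and divide by $\mu(\bb B^n)$; by linearity of the integral the identity of the proposition follows.

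The only point requiring care is that the integrals are finite, so that linearity may be applied legitimately. For a compact shape $M$ contained in a ball of radius $R$, the integrand $\ECT(M)(-v,t)-\chi(M)\bb 1_{[0,\infty)}(t)$ vanishes for $|t|>R$. It is also bounded, since tameness gives a uniform bound on the Euler characteristic of sublevel sets. Hence $h_M$ is bounded on the sphere, and since it is measurable it is integrable. The same argument applies to $N$, $M\cup N$ and $M\cap N$, all of which are compact and tame. I expect this integrability/measurability check to be the only mildly delicate step; the rest is bookkeeping.
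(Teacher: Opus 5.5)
Your proposal is correct and follows the paper's own argument: interpret every term as the unweighted Steiner point $\frac{1}{\mu(\bb B^n)}\int_{\bb S^{n-1}} v\, h_M(v)\,dv$, then multiply the pointwise additivity $h_M+h_N=h_{M\cup N}+h_{M\cap N}$ by $v$ and integrate over the sphere. Your integrability check (bounded integrand supported in $[-R,R]$, with uniform finiteness coming from tameness) is a harmless detail that the paper leaves implicit.
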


\begin{proof}
	Since the support function is additive, we have \[\int_{\bb S^{n-1}} v (h_M(v)+h_N(v)) dv=\int_{\bb S^{n-1}} v (h_{M \cup N}(v)+h_{M \cap N}(v)) dv,\] from which the result follows at once.
\end{proof}

\paragraph{The Steiner point as a minimizer}
Here is another perspective on the Steiner point:

\begin{prop}
	\label{SteinerMinimizer}
	The Steiner point $s(M)$ of a compact, tame shape $M\subseteq \bb R^n$ with $\chi(M) \neq 0$ is, among all points $a\in \bb R^n$, the unique minimizer of the following value:
	\[ \int_{\bb S^{n-1}} \left( \int_{\bb R} (\ECT(M)(-v,t)-\chi(M)\ECT(a)(-v,t)) dt\right)^2 dv\, .\]
\end{prop}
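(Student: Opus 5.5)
The plan is to rewrite the inner integral in terms of the support function and then minimize a quadratic in $a$. Since $\ECT(a)(-v,t)=\bb 1_{[\la -v,a\ra,+\infty)}(t)$, I first subtract and add $\chi(M)\bb 1_{[0,+\infty)}(t)$ inside the integrand. The first piece integrates to $h_M(v)$ by Definition \ref{DefSupport}. The second piece, $\chi(M)\left(\bb 1_{[0,+\infty)}(t)-\bb 1_{[-\la v,a\ra,+\infty)}(t)\right)$, integrates to $-\chi(M)\la v,a\ra$, by exactly the computation with the function $f$ in the proof of Proposition \ref{IsoSupport}. (Equivalently, $a$ is a convex set with $h_{\{a\}}(v)=\la v,a\ra$, and Lemma \ref{SuppMatchesConvex} together with additivity gives the same thing.) So the quantity to minimize is
\[ F(a)=\int_{\bb S^{n-1}} \left(h_M(v)-\chi(M)\la v,a\ra\right)^2 dv\, . \]

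Next I expand the square. This gives $F(a)=\int h_M^2\,dv-2\chi(M)\la \int v\, h_M(v)\,dv, a\ra+\chi(M)^2\int \la v,a\ra^2 dv$. For the last term I use the standard identity $\int_{\bb S^{n-1}} v v^T dv=\frac{\mu(\bb S^{n-1})}{n} I=\mu(\bb B^n) I$. This follows from symmetry, since the trace of the matrix is $\mu(\bb S^{n-1})$, and $\mu(\bb S^{n-1})=n\mu(\bb B^n)$. It is the same computation the paper calls standard in the proof of Proposition \ref{SteinerIsometry}. Hence
\[ F(a)=\int h_M^2\,dv-2\chi(M)\mu(\bb B^n)\la \chi(M)s(M),a\ra+\chi(M)^2\mu(\bb B^n)\|a\|^2 , \]
using the definition $\int v\,h_M(v)\,dv=\chi(M)\mu(\bb B^n)s(M)$.

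Finally I complete the square:
\[ F(a)=\chi(M)^2\mu(\bb B^n)\|a-s(M)\|^2+\text{const}\, , \]
where the constant is independent of $a$. Because $\chi(M)\neq 0$, the leading coefficient is strictly positive, so $F$ is a strictly convex quadratic with unique minimizer $a=s(M)$.

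The only delicate points are the justifications. First, the inner integral must be finite, which holds because $M$ is compact, so the integrand has compact support. Second, $h_M$ must be square integrable on the sphere. This should follow from $|h_M(v)|$ being bounded by a constant times $\sup_{x\in M}\|x\|$, since Euler curves of tame compact sets are uniformly bounded step functions. I would state that bound briefly rather than prove it in detail. The rest is the routine second-moment identity for the sphere.
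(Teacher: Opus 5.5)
Your proposal is correct and is essentially the paper's proof. You add and subtract $\chi(M)$ times the indicator of $[0,+\infty)$ to reduce to minimizing $\int_{\mathbb S^{n-1}}(h_M(v)-\chi(M)\langle v,a\rangle)^2\,dv$, and then spell out the ``standard minimization argument'' the paper leaves implicit: you use $\int_{\mathbb S^{n-1}} vv^{T}\,dv=\mu(\mathbb B^n)I$ and complete the square to $\chi(M)^2\mu(\mathbb B^n)\|a-s(M)\|^2+\mathrm{const}$. Your integrability remark (uniformly bounded Euler curves supported in a bounded $t$-interval) is also correct, and it is a detail the paper omits.
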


\begin{proof}
	By adding and subtracting $\chi(M)\bb 1_{[0,+\infty)}(t)$ in the innermost integral, the expression can be rewritten as $$\int_{\bb S^{n-1}}(h_M(v)-\chi(M) \langle v,a \rangle)^2 dv\, .$$ The minimizing problem is thus reduced to finding the best linear approximation of the function $h_M(v)$ in the least squares sense. Expanding the square and a standard minimization argument recovers the formula given for the Steiner point.
\end{proof}
In this light, the Steiner point is a central point in the following sense: it is the point ``closest'' to $M$ in a certain distance, given by an $L^2$ norm\footnote{Before the connection with the classical Steiner point was made apparent, the author suggested the name ``cECTroid'' for this distinguished point. It is probably for the best that a better name already exists.}. The natural question is if by varying the norm we can define other interesting distinguished points of a shape. In general, however, these points may not be unique. For instance, for a simply connected shape in $\bb R^2$, any point lying on its convex closure minimizes the analogous $L^1$ norm (this will follow from theorems \ref{PeriRecovery}, \ref{PeriLowBound}, and  lemma \ref{SimpCompPositive}). 

\subsection{The Steiner support and new pseudodistances}

With the Steiner point in our toolkit, we can now provide a new kind of support function, better suited for our purposes, since it doesn't single out the origin as a special point, and thus yields norms and distances between shapes that are invariant under common euclidean isometries.

\begin{definition}
	Let $M\subseteq \bb R^n$ be compact and tame. We define its \textbf{Steiner support function} as the function $S_M: \bb  R^n\to \bb R$ given by:
	\[ S_M(x) = h_M(x)-h_{\chi(M)s(M)}(x) =h_M(x)- \la x, \chi(M)s(M) \ra\, .\]
	Whenever $\chi(M) \neq 0$, we may use the following equivalent definition:
	\begin{align*} S_M(x):&=\int_{\bb R} (\ECT(M)(-x,t)-\chi(M) \ECT(s(M))(-x,t)) dt \\
		&=\int_{\bb R} (\ECT(M)(-x,t)-\chi(M) \bb 1_{[-\langle x,s(M) \rangle, +\infty)}(t)) dt\, .
	\end{align*}
\end{definition}

	It is straightforward to see that both definitions agree in the case $\chi(M) \neq 0$, which also agree with the support function of the centered shape $M-s(M)$. 

\begin{prop}
	\label{LinEquiv}
	The Steiner support function is linear-equivariant with respect to euclidean isometries, that is, for all compact tame shapes $M\subseteq \bb R^n$ and all isometries $T=L+a$, we have
	\[ S_{T(M)}(x)=S_{M}(L^{-1}(x))\, .\]
	In particular, for translations,
	\[ S_{M+a}=S_M\, .\]
\end{prop}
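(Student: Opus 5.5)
The plan is to work directly from the first definition, $S_M(x)=h_M(x)-\la x,\chi(M)s(M)\ra$. This form is valid for every shape, including those with $\chi(M)=0$, provided $\chi(M)s(M)$ is read as the unweighted Steiner point of remark \ref{RemarkUnweighted}. Then $S_{T(M)}$ splits into two pieces, each governed by an earlier result: the support function term by proposition \ref{IsoSupport}, and the unweighted Steiner point term by a transformation rule for that point. The translation statement is the special case $L=\operatorname{id}$.

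First, proposition \ref{IsoSupport} gives $h_{T(M)}(x)=h_M(L^{-1}(x))+\chi(M)\la x,a\ra$; note that $\chi(T(M))=\chi(M)$, since $T$ is a homeomorphism. Second, I need the identity
\[\chi(M)s(T(M))=\chi(M)\,L(s(M))+\chi(M)a\]
for unweighted Steiner points. When $\chi(M)\neq0$ this is proposition \ref{SteinerIsometry} multiplied by $\chi(M)$. When $\chi(M)=0$ I redo the computation in that proof without dividing by $\chi(M)$. One substitutes $h_{T(M)}(v)=h_M(L^{-1}(v))+\chi(M)\la v,a\ra$ into $\frac{1}{\mu(\bb B^n)}\int_{\bb S^{n-1}} v\, h_{T(M)}(v)\,dv$ and applies the change of variables $\tilde v=L^{-1}v$ to the first term, which yields $L$ applied to the unweighted Steiner point of $M$. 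The second term equals $\chi(M)a$ by the standard identity $\int_{\bb S^{n-1}} v\la v,a\ra\,dv=\mu(\bb B^n)a$, and it vanishes in this case. So the same formula holds uniformly.

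Combining these,
\[S_{T(M)}(x)=h_M(L^{-1}x)+\chi(M)\la x,a\ra-\la x,\,L(\chi(M)s(M))+\chi(M)a\ra .\]
The $\la x,a\ra$ terms cancel. By orthogonality of $L$, $\la x,L(\chi(M)s(M))\ra=\la L^{-1}x,\chi(M)s(M)\ra$, which leaves $h_M(L^{-1}x)-\la L^{-1}x,\chi(M)s(M)\ra=S_M(L^{-1}x)$. Setting $L=\operatorname{id}$ gives $S_{M+a}=S_M$.

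The only step needing care is the $\chi(M)=0$ case: the convention $s(M)=0$ must not be used there, and the unweighted Steiner point must be used instead. Otherwise everything is routine bookkeeping with proposition \ref{IsoSupport} and the computation from proposition \ref{SteinerIsometry}.
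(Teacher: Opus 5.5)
Your proof is correct and follows the paper's argument. The paper likewise combines proposition \ref{IsoSupport} with the transformation rule for the (unweighted) Steiner point and orthogonality of $L$, handling $\chi(M)=0$ separately by the same change-of-variables computation you describe, so your single formula simply merges its two cases. When $\chi(M)=0$ the middle term should be written $L(\chi(M)s(M))$ rather than $\chi(M)L(s(M))$, as you in fact do in the final combination.
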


\begin{proof}
	For $\chi(M)\neq 0$, we have $s(T(M))=T(s(M))$ and $h_{T(M)}(v)=h_M(L^{-1}(x))+ \la x, \chi(M)a\ra$ by propositions \ref{SteinerIsometry} and \ref{IsoSupport}. Thus,
	\begin{align*} 
		S_{T(M)}(x)&=h_M(L^{-1}(x))+\la x,\chi(M)a\ra-\la x, \chi(M)(L(s(M))+a) \ra \\
		&= h_M(L^{-1}(x))-\la x, L(\chi(M)s(M)) \ra \\
		&= h_M(L^{-1}(x))-\la L^{-1}(x), \chi(M)s(M) \ra \\
		&=S_M(L^{-1}(x))\, .
	\end{align*}
	
	If $\chi(M)=0$, notice that by proposition \ref{IsoSupport} we have $h_{T(M)}(x)=h_{M}(L^{-1}(x))$. Therefore:
	\begin{align*} 
		\chi(T(M))s(T(M)) &= \frac{1}{\mu(\bb B^n)}\int_{\bb S^{n-1}} v \cdot h_{T(M)}(v) dv\\
		&=\frac{1}{\mu(\bb B^n)}\int_{\bb S^{n-1}} v \cdot h_M(L^{-1}(v)) dv\\ &= L\left(\frac{1}{\mu(\bb B^n)}\int_{\bb S^{n-1}} \tilde v \cdot h_M(\tilde v) d\tilde v\right) \\&= L(\chi(M)s(M))\, , \end{align*}
	where $\tilde v = L^{-1}(v)$. We conclude that $S_{T(M)}(x)=h_M(L^{-1}(x))-\la x, L(\chi(M)s(M)) \ra$, and from there
	we proceed as in the case $\chi(M)\neq0$.
\end{proof}
\begin{prop}
	\label{SteinerSuppAdd}
	The Steiner support function is additive:
	\[ S_M(x)+S_N(x)=S_{M \cup N}(x)+S_{M \cap N}(x). \]
\end{prop}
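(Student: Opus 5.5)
We work from the first form of the definition,
\[ S_M(x)=h_M(x)-\la x, \chi(M)s(M)\ra, \]
where $\chi(M)s(M)$ denotes the unweighted Steiner point of Remark \ref{RemarkUnweighted}. This form is valid for every compact tame shape, including those with Euler characteristic zero, so no case distinction is needed. The claim then splits into two additivity statements, one for each term, and these can be added together.

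The first term is the support function. Proposition \ref{SuppAddExt} gives its additivity directly:
\[ h_M(x)+h_N(x)=h_{M\cup N}(x)+h_{M\cap N}(x) \]
for all $x\in\bb R^n$. This follows from additivity of the ECT (Proposition \ref{add}), additivity of $\chi$ (equation (\ref{AddEuler})), and linearity of the integral.

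The second term involves the unweighted Steiner points. Proposition \ref{SteinerWeightAdd}, read with the convention of Remark \ref{RemarkUnweighted}, gives
\[ \chi(M)s(M)+\chi(N)s(N)=\chi(M\cup N)s(M\cup N)+\chi(M\cap N)s(M\cap N). \]
Taking the inner product of both sides with the fixed vector $x$, which is linear in its second argument, yields the corresponding identity for the terms $\la x,\chi(\cdot)s(\cdot)\ra$.

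Subtracting the second identity from the first gives exactly $S_M+S_N=S_{M\cup N}+S_{M\cap N}$.

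The only point needing care is the shapes with Euler characteristic zero. For these, the weighted Steiner point was set to the origin, so the unweighted point cannot be read as $\chi\cdot s$ and must be taken to mean the integral in Remark \ref{RemarkUnweighted}. We should check that the definition of $S_M$ uses that integral, which the notation of Remark \ref{RemarkUnweighted} ensures. We should also check that Proposition \ref{SteinerWeightAdd} was proved for the unweighted integrals themselves, which it was, since its proof integrates the additive support functions against $v$. With both checks in place, no division by $\chi$ occurs anywhere and the argument is uniform.
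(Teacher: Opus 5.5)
Your proof is correct and matches the paper's own argument. It combines additivity of the support function (Proposition~\ref{SuppAddExt}) with additivity of the unweighted Steiner point (Proposition~\ref{SteinerWeightAdd}), paired linearly with $x$. Your explicit check that the $\chi=0$ case goes through under the unweighted convention of Remark~\ref{RemarkUnweighted} is a detail the paper leaves implicit.
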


\begin{proof}
	This follows immediately from proposition \ref{SuppAddExt}, additivity of the support function, and proposition \ref{SteinerWeightAdd}, additivity of the unweighted Steiner point.
\end{proof}

\begin{remark}
	The usual support function cannot distinguish between $K \sqcup L$ and $K+L$ for $K$ and $L$ disjoint convex shapes, as both equal $h_K +h_L$ (see proposition \ref{MinkAdd}). However, the Steiner support function can in general: since $\chi(K + L)=1$ while $\chi(K \sqcup L)=2$, it turns out that $s(K+L)=s(K)+s(L)$ while $s(K \sqcup L)=\frac{s(K)+s(L)}{2}$. This means that the Steiner support function of $K+L$ is the support function of $K+L-(s(K)+s(L))$, while the one of $K \sqcup L$ is the support function of $K \sqcup L-\left( \frac{s(K)+s(L)}{2}\right)$, which equals the one of $K + L-\left( \frac{s(K)+s(L)}{2}\right)$. By theorem \ref{SupportInjectiveConvex}, the functions are distinct whenever these two convex sets are distinct, that is, whenever $s(K)+s(L) \neq 0$. On the other hand, the Steiner support function cannot distinguish certain shapes that the support function can, as we will see later with point clouds.
\end{remark}

\begin{remark}
	In the case of shapes with Euler characteristic zero, one could bypass subtracting $\la x, \chi(M)s(M) \ra$, defining $S_M(x)$ as $h_M(x)$, and still get a function with the same behavior under euclidean isometries. This has the advantage of letting the alternate definition
	\[ S_M(x) = \int_{\bb R}(\ECT(M)(-x,t)-\chi(M) \bb 1_{[-\la x, s(M) \ra,+\infty)}(t)) dt\] become valid for all shapes. However, doing so breaks additivity, hence why we added this ``correction term''. Nonetheless, this alternate definition may be of use.
\end{remark}

\paragraph{New ECT-based pseudodistances}
We arrive at our main definition: new ``seminorms'' and pseudodistances for compact, tame shapes that are invariant under common isometry. For ease of writing, we introduce the following notation for the \textbf{Steiner centered ECT}:
\[ \overline{\ECT}(M)(v,t):= \ECT(M)(v,t)-\chi(M) \ECT(s(M))(v,t) \, . \]
Note that if $\chi(M) \neq 0$, we have \[S_M(v)=\int_{\bb R} \overline{\ECT}(v,t) dt\, .\]

\begin{definition}
	\label{DefNormSupp}
	Let $M$ be compact and tame, and let $p,q\in [1, +\infty]$. We define its \textbf{$p$-support norm} as
	\[ \normsupp{M}_p:=\left(\int_{\bb S^{n-1}} |S_M(v)|^p dv\right)^{\frac{1}{p}}. \]
	Likewise, we define its \textbf{$(p,q)$-Steiner centered norm} as
	\[\normcent{M}_{p,q}:=\left(\int_{\bb S^{n-1}} \left( \int_{\bb R}|\overline{\ECT}(M)(v,t)|^p dt \right)^\frac{q}{p} dv\right)^\frac{1}{q} .\]
\end{definition}

\begin{definition}
	\label{DefDistSupp}
	Let $M,N$ be compact and tame shapes, and let $p,q\in [1, +\infty]$. We define their \textbf{$p$-support distance} $\dsupp_p(M,N)$ as
	\[ \dsupp_p(M,N):=\left(\int_{\bb S^{n-1}} |S_M(v)-S_N(v)|^p dv\right)^\frac{1}{p}.\]
	Likewise, we define their \textbf{$(p,q)$-Steiner centered distance} $\dcent_{p,q}(M,N)$ as
	\[ \dcent_{p,q}(M,N):=\left( \int_{\bb S^{n-1}} \left( \int_{\bb R} |\overline{\ECT}(M)(v,t)-\overline{\ECT}(N)(v,t)|^p dt\right)^{\frac{q}{p}} dv\right)^{\frac{1}{q}}. \]
\end{definition}

\begin{coro}
	The $p$-support and $(p,q)$-Steiner centered norms are invariant under isometries. The $p$-support and $(p,q)$-Steiner centered distances are invariant under common isometry. The $p$-support distance between two shapes $M$ and $N$ is unchanged when either shape is translated.
\end{coro}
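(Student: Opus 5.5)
The plan is to reduce every claim to the transformation rules already proven for the Steiner support function (Proposition \ref{LinEquiv}) and for the ECT under isometries (Proposition \ref{isoformula}). These will be combined with the invariance of the measure on $\bb S^{n-1}$ under orthogonal maps and of $dt$ under translations.

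\textbf{Support norm and support distance.} By Proposition \ref{LinEquiv}, $S_{T(M)}(v)=S_M(L^{-1}v)$. Hence
\[\normsupp{T(M)}_p^p=\int_{\bb S^{n-1}}|S_M(L^{-1}v)|^p\,dv ,\]
and the change of variables $\tilde v=L^{-1}v$, which has Jacobian of absolute value $1$, gives $\normsupp{M}_p^p$. Similarly,
\[S_{T(M)}(v)-S_{T(N)}(v)=S_M(L^{-1}v)-S_N(L^{-1}v),\]
so the same substitution yields $\dsupp_p(T(M),T(N))=\dsupp_p(M,N)$. For the last claim, Proposition \ref{LinEquiv} gives $S_{M+a}=S_M$, so the integrand of $\dsupp_p(M+a,N)$ is literally the same as that of $\dsupp_p(M,N)$, and likewise if $N$ is translated. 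The case $p=\infty$ is handled identically, since suprema are also invariant under the bijection $v\mapsto L^{-1}v$ of the sphere.

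\textbf{Steiner centered ECT.} First I will show
\[\overline{\ECT}(T(M))(v,t)=\overline{\ECT}(M)(L^{-1}v,\,t-\la a,v\ra).\]
By Proposition \ref{isoformula}, $\ECT(T(M))(v,t)=\ECT(M)(L^{-1}v,t-\la a,v\ra)$, and $\chi(T(M))=\chi(M)$ because $T$ is a homeomorphism. If $\chi(M)=0$, the correction term vanishes and there is nothing more to do. If $\chi(M)\neq0$, Proposition \ref{SteinerIsometry} gives $s(T(M))=T(s(M))$, which is the image of the one-point shape $\{s(M)\}$ under $T$. Applying Proposition \ref{isoformula} to that point set gives
\[\ECT(T(s(M)))(v,t)=\ECT(s(M))(L^{-1}v,t-\la a,v\ra),\]
and combining the two pieces proves the displayed identity.

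\textbf{Steiner centered norm and distance.} Substituting the identity into $\normcent{T(M)}_{p,q}$ and $\dcent_{p,q}(T(M),T(N))$, I will first change variables $\tilde t=t-\la a,v\ra$ in the inner integral. For fixed $v$ this is a translation, so the inner integral is unchanged. Then I will change variables $\tilde v=L^{-1}v$ in the outer integral, exactly as in the proof of invariance of the classical ECT distance. For the distance it matters that both shapes are moved by the same $T$: then the shift $\la a,v\ra$ is common to both terms and a single substitution handles both.

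I expect no real obstacle. The only delicate point is the bookkeeping when $\chi(M)=0$ or $\chi(M)\neq\chi(N)$. In the first case the convention $s(M)=0$ is harmless, because the weight $\chi(M)$ kills the correction term. In the second case one must treat $\overline{\ECT}(M)$ and $\overline{\ECT}(N)$ separately before subtracting, which the identity above already allows.
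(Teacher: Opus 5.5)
Your proposal is correct and follows the same route as the paper. The paper uses Proposition~\ref{LinEquiv} for the $p$-support norm and distance, and combines Propositions~\ref{SteinerIsometry} and~\ref{isoformula} with the translation and orthogonal changes of variables for the Steiner centered case. You additionally make explicit two points the paper leaves implicit: the transformation rule $\overline{\ECT}(T(M))(v,t)=\overline{\ECT}(M)(L^{-1}v,\,t-\la a,v\ra)$, and the observation that the correction term vanishes when $\chi(M)=0$.
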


\begin{proof}
	This follows at once from proposition \ref{LinEquiv} for the $p$-support norm and distance. The $(p,q)$-Steiner centered case is analogous and follows from the behavior of the Steiner point and the ECT under isometries (propositions  \ref{SteinerIsometry} and \ref{isoformula}).
\end{proof}

Note that if $M$ and $N$ share the same Steiner point and Euler characteristic, $\dcent_{p,q}(M,N)$ coincides with the classical ECT-induced distance, see definition \ref{DefDistClassic}. The same is true if both shapes have Euler characteristic zero, regardless of their position. 

Next, we point out some trivial relations between these norms and distances:
\begin{prop}
	\label{DistanceBounds}
	Let $p,q \in [1, +\infty]$. For any compact, tame shape $M$ with $\chi(M) \neq 0$, let $R$ be the radius of any ball that fully encloses $M$ and $s(M)$. Then:
	\[ \normsupp M_q\leq (2R)^{1-\frac{1}{p}}\normcent M_ {p,q},\]
	Likewise, for any compact, tame shapes $M$ and $N$ with $\chi(M), \chi(N) \neq 0$, let $R$ be the radius of any ball that fully encloses $M$, $N$, $s(M)$ and $s(N)$. Then:
	\[ \dsupp_q(M,N)\leq (2R)^{1-\frac{1}{p}}\dcent_{p,q}(M,N).\]
\end{prop}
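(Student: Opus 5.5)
The plan is to compare the two quantities direction by direction, using Hölder's inequality on the inner integral over $t$ after showing that the integrand has support in an interval of length at most $2R$.

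\emph{Step 1 (support of the integrand).} Fix $v \in \bb S^{n-1}$ and let $c$ be the centre of the enclosing ball of radius $R$. If $t < \la c,v\ra - R$, then $M_{v,t}=\emptyset$, so $\ECT(M)(v,t)=0$. Also $\la s(M),v\ra > t$, so $\ECT(s(M))(v,t)=0$ as well. If $t \geq \la c,v\ra + R$, then $M_{v,t}=M$ and $s(M)$ lies in its own sublevel set, so
\[ \ECT(M)(v,t)-\chi(M)\ECT(s(M))(v,t)=\chi(M)-\chi(M)=0 . \]
Hence $\overline{\ECT}(M)(v,\cdot)$ vanishes outside an interval $I_v$ of length $2R$. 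The same holds for $\overline{\ECT}(M)(v,\cdot)-\overline{\ECT}(N)(v,\cdot)$ when the ball encloses $M$, $N$, $s(M)$ and $s(N)$.

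\emph{Step 2 (Hölder on the inner integral).} Since $\chi(M)\neq 0$, the definition gives $S_M(-v)=\int_{\bb R}\overline{\ECT}(M)(v,t)\,dt$, where the sign of the direction is fixed by the convention $S_M(x)=\int \ECT(M)(-x,t)\dots$. Hölder's inequality on $I_v$ with exponents $p$ and $p/(p-1)$ then yields
\[ |S_M(-v)| \leq \int_{I_v}|\overline{\ECT}(M)(v,t)|\,dt \leq (2R)^{1-\frac1p}\left(\int_{\bb R}|\overline{\ECT}(M)(v,t)|^p dt\right)^{\frac1p}. \]
This covers $p=\infty$ with exponent $1$ and $p=1$ with exponent $0$.

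\emph{Step 3 (integrate over the sphere).} Raise the inequality to the power $q$ and integrate over $\bb S^{n-1}$. The substitution $v\mapsto -v$ preserves the measure on the sphere, so the left side becomes $(\normsupp{M}_q)^q$. Taking $q$-th roots gives the first claim. For $q=\infty$, take suprema instead.

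The distance bound is identical. By linearity, $S_M(-v)-S_N(-v)=\int(\overline{\ECT}(M)-\overline{\ECT}(N))(v,t)\,dt$, and Step 1 shows this difference is supported on an interval of length $2R$.

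The only real care needed is Step 1. The hypothesis that the ball contains the Steiner points is exactly what guarantees the subtracted indicator jumps inside the same window as the shape, so the integrand vanishes at both ends. The rest is routine.
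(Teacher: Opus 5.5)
Your proof is correct and follows the paper's own argument. The paper translates the ball's centre to the origin, cuts the $t$-integral to $[-R,R]$, and applies Hölder to $\overline{\ECT}(M)(v,\cdot)$, exactly as you do with the window $[\la c,v\ra - R, \la c,v\ra + R)$. Your explicit handling of the sign convention ($S_M(-v)=\int \overline{\ECT}(M)(v,t)\,dt$, followed by $v\mapsto -v$ on the sphere) is a small improvement, since the paper glosses over that point.
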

\begin{proof}
	Translate the shape(s) so that the center of the enclosing ball is the origin. The integration range $\bb R$ can then be safely cut short to $[-R, R]$, as all hyperplanes below height $-R$ have not yet reached any of the involved shapes, and all above height $R$ have surpassed them all. The statement then follows from the Hölder inequalities \[ \left|\int_{-R}^R f\right| \leq \int_{-R}^R |f| \leq\left(\int_{-R}^R |f|^p\right)^\frac{1}{p}\left( \int_{-R}^R 1^{1-\frac{1}{p}}\right)^{1-\frac{1}{p}} =  \left(\int_{-R}^R |f|^p\right)^\frac{1}{p} (2R)^{1-\frac{1}{p}}\]
	applied to $f=\overline{\ECT}(v,\bullet)$.
\end{proof}

\begin{remark}
	\label{RmkAlternativeCentered}
	Similar bounds hold for shapes with Euler characteristic zero, but with an added term to account for $\la v, \chi(M)s(M)\ra$. This hints that perhaps the definition of the Steiner centered ECT for the case $\chi(M)=0$ should be changed to 
	\[ \overline{\ECT}(M)=\ECT(M)+\ECT(a)-\ECT(\chi(M)s(M))\, , \]
	where $a$ is a point placed at the origin. This makes it so that $S_M(v)= \int_\bb R \overline{\ECT}(-v, t) dt$ in this case as well, and makes the bounds work in all cases.
\end{remark}

As usual, variations of these definitions can be made if one is interested in distances up to isometry, by taking the infimum of all distances when one of the shapes is transformed under an isometry. We will later describe an alternative approach in definition \ref{DefBarcodes}.

It is important to keep in mind that despite calling them distances for short, these are merely pseudodistances: indeed, the Steiner point of a single point is itself, which means its Steiner support function is identically zero. This means any two points have zero support and Steiner centered distance. More is true: any point cloud (disjoint union of points) has identically zero Steiner support, as follows from proposition \ref{SteinerSuppAdd} and the fact that any point has identically zero Steiner support function. This can be seen as a strange quirk, or as a strength of the support distance, as it implies it is completely unaffected by noise in the form of point clouds:

\begin{theorem}[Support distance robustness under point cloud noise]
	\label{ThmPointCloud}
	Let $M$ be a compact tame shape, and let $P$ be a point cloud disjoint from $M$. Then:
	\[ S_{M \sqcup P}=S_M\, , \]
	and for any other compact, tame shape $N$:
	\[\dsupp_p(M \sqcup P, N)=\dsupp_p(M,N)\, .\]
\end{theorem}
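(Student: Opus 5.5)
The plan is to reduce everything to additivity of the Steiner support function (proposition \ref{SteinerSuppAdd}) together with the fact that a single point has identically zero Steiner support.

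First, for a single point $p\in\bb R^n$, we have $\chi(\{p\})=1$. By lemma \ref{SuppMatchesConvex}, since $\{p\}$ is compact and convex, $h_{\{p\}}(x)=\la x,p\ra$. Next compute $s(\{p\})$. Either apply proposition \ref{SteinerIsometry} to the translation by $p$, starting from the origin (whose support function vanishes, so its Steiner point is $0$), or use directly the standard identity $\int_{\bb S^{n-1}} v\la v,p\ra dv=\mu(\bb B^n)p$ already invoked in that proof. Either way $s(\{p\})=p$, and therefore
\[S_{\{p\}}(x)=\la x,p\ra-\la x,p\ra=0.\]

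Second, extend this to a point cloud $P=\{p_1,\dots,p_k\}$ by induction on $k$, using proposition \ref{SteinerSuppAdd} with $M=\{p_1,\dots,p_{k-1}\}$ and $N=\{p_k\}$. Their intersection is empty, and the empty set has zero ECT, $\chi=0$, $h\equiv0$, hence $S_\emptyset\equiv 0$. This gives
\[S_P=S_{\{p_1,\dots,p_{k-1}\}}+S_{\{p_k\}}=0.\]
Then apply proposition \ref{SteinerSuppAdd} once more to $M$ and $P$. Since they are disjoint, $M\cap P=\emptyset$, so
\[S_{M\sqcup P}=S_M+S_P-S_\emptyset=S_M.\]
The distance statement follows immediately by substituting this into definition \ref{DefDistSupp}.

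The only delicate point is making sure the conventions for Euler characteristic zero are consistent. The empty set and $M$ itself may have $\chi=0$, so I would check that $S$ is always read as $h_M(x)-\la x,\chi(M)s(M)\ra$, with the unweighted Steiner point of remark \ref{RemarkUnweighted}. Under that reading, additivity (proposition \ref{SteinerSuppAdd}) holds with no case distinction, and the empty set indeed contributes zero.
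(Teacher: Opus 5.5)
Your proposal is correct and follows the paper's route: you show $S_{\{p\}}\equiv 0$ because $s(\{p\})=p$, extend this to point clouds via Proposition~\ref{SteinerSuppAdd}, and then apply additivity once more to $M$ and $P$ with $M\cap P=\emptyset$. You also check explicitly that $S_\emptyset\equiv 0$ under the unweighted convention, a detail the paper leaves implicit.
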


\begin{proof}
	As proven by the discussion above, $S_P$ is identically zero. The statement follows immediately from additivity, proposition \ref{SteinerSuppAdd}.
\end{proof}
Finally, the support distance also verifies an interesting additive property:
\begin{coro}
	\label{IterativeDist}
	Let $M$ and $N$ be compact, tame shapes. Then:
	\[ \dsupp_p(M\cup N, M)= \dsupp_p(N, M\cap N)\, .\]
\end{coro}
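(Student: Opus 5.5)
The plan is to reduce the statement to the additivity of the Steiner support function, Proposition \ref{SteinerSuppAdd}, and then substitute into Definition \ref{DefDistSupp}. No estimates are needed: the claim is that the integrands of both sides agree pointwise on $\bb S^{n-1}$.

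First, we apply Proposition \ref{SteinerSuppAdd} to the pair $M,N$. For every $x\in\bb R^n$ it gives
\[ S_M(x)+S_N(x)=S_{M\cup N}(x)+S_{M\cap N}(x)\, , \]
which rearranges to
\[ S_{M\cup N}(x)-S_M(x)=S_N(x)-S_{M\cap N}(x)\, . \]
This identity holds for all compact tame $M,N$, regardless of their Euler characteristics. The reason is that Proposition \ref{SteinerSuppAdd} was derived from additivity of $h$ (Proposition \ref{SuppAddExt}) together with additivity of the unweighted Steiner point (Proposition \ref{SteinerWeightAdd}). The convention of Remark \ref{RemarkUnweighted} handles the shapes with $\chi=0$ consistently, because the correction term $\la x,\chi(M)s(M)\ra$ is defined in every case as the unweighted integral.

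Next, we take absolute values and raise to the $p$-th power, so that $|S_{M\cup N}(v)-S_M(v)|^p=|S_N(v)-S_{M\cap N}(v)|^p$ for each $v\in\bb S^{n-1}$. Integrating over the sphere and taking $p$-th roots gives $\dsupp_p(M\cup N,M)=\dsupp_p(N,M\cap N)$ directly from Definition \ref{DefDistSupp}. For $p=\infty$, equality of the functions $|S_{M\cup N}-S_M|$ and $|S_N-S_{M\cap N}|$ gives equality of their suprema.

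The only step needing care is confirming that $M\cup N$ and $M\cap N$ are again compact and tame, so that their Steiner support functions are defined. This follows because tame sets are closed under unions and intersections, and finite unions and intersections of compact sets are compact. I expect no real obstacle beyond checking this point and the $\chi=0$ convention above.
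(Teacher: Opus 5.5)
Your proof is correct and follows the paper's argument: the paper's proof is the one-line remark that the corollary follows from the additivity of the Steiner support function (Proposition \ref{SteinerSuppAdd}). You add the explicit rearrangement $S_{M\cup N}-S_M=S_N-S_{M\cap N}$ and the routine checks on the $\chi=0$ convention and on closure of compact tame sets under union and intersection.
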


\begin{proof}
	This follows at once from additivity, proposition \ref{SteinerSuppAdd}.
\end{proof}

This last result leads to a computational advantage when working with filtered shapes: suppose we have a growing sequence of shapes $M_1 \subseteq ... \subseteq M_n$ and we wish to compute the distance between two stages $M_i \subseteq M_j$ of the sequence. Writing $M_j=M_i \cup N$ for some compact, tame set $N$ (representing the ``growth'' achieved from $M_i$ to $M_j$), corollary \ref{IterativeDist} ensures that we may replace this calculation by computing the support distance between $N$ and $N \cap M_i$. Since $N$ is the growth from one stage to the other, $N$ may be a much simpler shape than $M_i$ and $M_j$. This is particularly true for sequences arising from filtered simplicial complexes, for which $N$ and $N \cap M_i$ may be just a few simplices.

\subsection{Non-additive extensions}

Taking a step back, our general strategy has relied on transforming Euler curves into something we could integrate along each direction. We have done this by subtracting a step function whose value at infinity was the Euler characteristic of our shape. This approach preserves additivity, which is desirable from a theoretical and computational standpoint, but causes different shapes to share the same support function, mistaking them as the same. There are more ways to take Euler curves into more amenable objects: these types of strategies are known in the TDA literature as \emph{hybrid transforms}. Essentially, Euler curves are multiplied by a kernel: a real valued function which make them integrable across the whole of $\bb R$. In \cite{lebovici_hybrid_2024}, example 7.13, the authors define a hybrid transform reminiscent of the Laplace transform and relate it to an invariant called the persistent magnitude, defined in \cite{govc2021persistent}, where the connection with the Laplace transform was made clear. As it turns out, these strategies also have their place in the support function context, allowing us to define new support functions and Steiner points that are well behaved under isometries, but drop additivity. It is the case of the following definition, inspired by the aforementioned construction in loc. cit.

\begin{definition}
	Let $f: \bb R \to \bb R_{\geq 0}$ be a non-negative function such that $f^{-1}(0)=\{0\}$. The \textbf{exponential support} with parameter $f$ of a compact, tame shape $M \subseteq \bb R^n$ is the function \linebreak${\tilde h_M: \bb R^n \to \bb R}$ defined by
	\[ \tilde h_M(x) :=\ln \left[ \int_{\bb R} e^{-t} f(\ECT(M)(-x,t)) dt \right] \, . \]
\end{definition}

\begin{prop}
	The exponential support extends the support function for compact convex sets, that is, for a compact, convex set $K$:
	\[ \tilde h_K = \ln f(1) + h_K.\]
\end{prop}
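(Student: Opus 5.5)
The plan is to compute the Euler curves of $K$ explicitly, exactly as in the proof of lemma \ref{SuppMatchesConvex}, and then evaluate the integral directly. First we dispose of the degenerate case: if $K=\emptyset$ both sides are meaningless (the integrand vanishes and the logarithm is $-\infty$), so assume $K$ is nonempty.

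For a nonempty compact convex $K$, every sublevel set $K_{-x,t}$ is either empty or a nonempty compact convex set, hence contractible. Therefore, as computed in the proof of lemma \ref{SuppMatchesConvex},
\[ \ECT(K)(-x,t)=\bb 1_{[-h_K(x),+\infty)}(t)\, . \]
Since $f(0)=0$ by hypothesis, it follows that $f(\ECT(K)(-x,t))$ equals $f(1)$ for $t\geq -h_K(x)$ and $0$ otherwise. This is the only place the hypothesis $f^{-1}(0)=\{0\}$ enters: we need $f(0)=0$ so that the integrand vanishes below the first critical height. We also need $f(1)>0$, so that the logarithm is finite.

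The integral then reduces to an elementary computation:
\[ \int_{\bb R} e^{-t} f(\ECT(K)(-x,t))\, dt = f(1)\int_{-h_K(x)}^{+\infty} e^{-t}\, dt = f(1)\, e^{h_K(x)}\, . \]
Taking the natural logarithm gives $\tilde h_K(x)=\ln f(1)+h_K(x)$.

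There is no real obstacle here. The only points needing care are that the kernel $e^{-t}$ makes the integral converge at $+\infty$ even though the Euler curve does not decay, and that the case $x=0$ is covered consistently. For $x=0$, remark \ref{rmkECText} gives $\ECT(K)(0,t)=\bb 1_{[0,\infty)}(t)$, and $h_K(0)=0$, so the same formula holds.
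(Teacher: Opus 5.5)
Your proof is correct and follows essentially the same route as the paper: you identify the Euler curve of a convex $K$ as the indicator $\bb 1_{[-h_K(x),+\infty)}$ (as in lemma \ref{SuppMatchesConvex}), use $f(0)=0$, and evaluate $\int_{-h_K(x)}^{+\infty} e^{-t} f(1)\,dt = f(1)e^{h_K(x)}$ before taking logarithms. Your remarks on the empty set, on $x=0$, and on $f(1)>0$ (which also follows from $f^{-1}(0)=\{0\}$) are minor additions the paper leaves implicit.
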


\begin{proof}
	In a similar fashion as in the proof of lemma \ref{SuppMatchesConvex}, for $K$ convex, the integral equals
	\[ \int_{-h_K(x)}^{+\infty} e^{-t} f(1) dt= f(1) e^{h_K(x)},\]
	from which the result follows.
\end{proof}
So, when $f(1)=1$, this is the support function on convex sets: we consider such an $f$ for the remainder of the article. Reasonable choices might include the absolute value or square function.

We can now define Steiner points and Steiner supports based on this new extension of the support function. Remarkably, there is now no need to divide by the Euler characteristic:

\begin{definition}
	The \textbf{exponential Steiner point} of a compact, tame shape $M\subseteq \bb R^n$ is defined by
	
	\[ \tilde s(M)=\frac{1}{\mu(\bb B^n)} \int_{\bb S^{n-1}} v \cdot \tilde h_M(v) dv \, .\]
\end{definition}

\begin{theorem}
	The exponential Steiner point is covariant under euclidean isometries:
	\[ \tilde s(T(M))=T(\tilde s(M)) \]
	for every euclidean isometry $T=L+a$.
\end{theorem}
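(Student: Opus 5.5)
The plan is to mimic the proofs of Proposition \ref{IsoSupport} and Proposition \ref{SteinerIsometry}. First I would establish how the exponential support transforms under an isometry, namely
\[ \tilde h_{T(M)}(x)=\tilde h_M(L^{-1}(x))+\la x,a\ra \]
for all $x\in\bb R^n$ with $\|x\|=1$; in fact it holds for all $x$. Then I would integrate this identity against $v$ over the sphere, exactly as in the Steiner point case.

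\textbf{Step 1: transformation law.} By Proposition \ref{isoformula}, extended to all of $\bb R^n$ via Remark \ref{rmkECText} (the proof there never used $\|v\|=1$), we have
\[ \ECT(T(M))(-x,t)=\ECT(M)(-L^{-1}(x),t+\la x,a\ra), \]
since $\la a,-x\ra=-\la x,a\ra$. Substituting into the definition gives
\[ \int_{\bb R} e^{-t} f(\ECT(M)(-L^{-1}x,t+\la x,a\ra))\,dt . \]
The change of variables $\tilde t=t+\la x,a\ra$ turns this into
\[ e^{\la x,a\ra}\int_{\bb R} e^{-\tilde t} f(\ECT(M)(-L^{-1}x,\tilde t))\,d\tilde t . \]
Taking logarithms yields the claimed formula.

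The key point is that the exponential kernel converts a translation in $t$ into a multiplicative factor, which the logarithm then turns into an additive linear term. This is precisely why no division by $\chi(M)$ is needed: the coefficient of $\la x,a\ra$ is $1$ rather than $\chi(M)$.

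\textbf{Step 2: well-definedness, the main obstacle.} The integral must be finite and positive for the logarithm to make sense. Finiteness for $t\to+\infty$ holds because the integrand is $e^{-t}f(\chi(M))$, which is integrable. For $t$ below the minimum height of $M$, the sublevel set is empty, the ECT vanishes and $f(0)=0$, so there is no problem at $-\infty$.

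Positivity requires $M\neq\emptyset$. Just above the minimal height the sublevel set is a nonempty compact set, so its Euler characteristic is nonzero on some interval. The cleanest argument is that the Euler curve cannot be identically zero: it is a right-continuous step function that starts at $0$, and at the first height where $M$ is met the sublevel set is a nonempty compact convex-in-a-hyperplane slice. I expect a short argument of this kind is needed, and I would state it as a tacit hypothesis (nonempty $M$, with $f>0$ off $0$) if it gets subtle. The same reasoning shows that $\tilde h_M$ is measurable and bounded on the sphere, so $\tilde s$ is well defined.

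\textbf{Step 3: integrate.} Write
\[ \mu(\bb B^n)\,\tilde s(T(M))=\int_{\bb S^{n-1}} v\,\tilde h_M(L^{-1}v)\,dv+\int_{\bb S^{n-1}} v\la v,a\ra\,dv . \]
For the first integral, the substitution $\tilde v=L^{-1}v$, which preserves the sphere measure, together with linearity of $L$ gives $L(\mu(\bb B^n)\tilde s(M))$. The second integral is the standard computation already used in Proposition \ref{SteinerIsometry}, and it equals $\mu(\bb B^n)a$. Dividing by $\mu(\bb B^n)$ gives
\[ \tilde s(T(M))=L(\tilde s(M))+a=T(\tilde s(M)). \]
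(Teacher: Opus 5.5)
Your Steps 1 and 3 are the paper's proof. The shift $\tilde t=t+\la x,a\ra$ turns the translation into a factor $e^{\la x,a\ra}$, and the logarithm gives $\tilde h_{T(M)}(x)=\tilde h_M(L^{-1}x)+\la x,a\ra$. Integrating against $v$ and using $\int_{\bb S^{n-1}}v\la v,a\ra\,dv=\mu(\bb B^n)a$ then finishes. The paper says nothing about well-definedness, so the statement is implicitly conditional on $\tilde s(M)$ existing. Your Step 1 already covers that case: the transformation law holds in $[-\infty,\infty)$ and $\la v,a\ra$ is bounded on the sphere, so $v\,\tilde h_{T(M)}(v)$ is integrable exactly when $v\,\tilde h_M(v)$ is.

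Your Step 2 should be cut or restricted, because part of it is false. The first nonempty sublevel set $M\cap H$ is an arbitrary compact tame subset of the hyperplane $H$, not necessarily a convex one. A nonempty compact set can have Euler characteristic $0$, and for $n\ge 3$ the Euler curve can vanish identically. Take the unit circle $M$ in the plane $\{z=0\}\subseteq\bb R^3$ and $x=\pm e_3$. Every sublevel set is $\emptyset$ or $M$, and $\chi(M)=0$, so the integrand is identically zero. Hence $\tilde h_M(\pm e_3)=\ln 0=-\infty$, and $\tilde h_M$ is unbounded below near these directions.

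Your positivity argument is valid in the plane. There $M\cap\ell$ is a finite union of points and segments, so $\chi\ge 1$, and right-continuity gives an interval where the Euler curve is nonzero. In higher dimensions, however, the claimed boundedness of $\tilde h_M$ on the sphere fails, and the covariance statement does not need it.
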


\begin{proof}
	One checks that $\tilde h_{L(M)+a}(v)=\tilde h_{M}(L^{-1}(v))+\langle v,a \rangle$ via the change of variables $\tilde t = t + \langle v,a \rangle$ (see the proof of proposition \ref{IsoSupport}) and properties of the natural logarithm. The result then follows from the value of $\int_{\bb S^{n-1}} v \cdot \langle v,a \rangle \: dv$ being $\mu (\bb B^n) a$.
\end{proof}

\begin{definition}
	\label{DefExpSupp}
	The \textbf{exponential Steiner support} $\tilde S_M$ of a compact, tame shape $M$ is defined as the exponential support of $M-\tilde s(M)$, that is, as $\tilde S_M(v):=\tilde h_M(v) -\langle v,\tilde s(M) \rangle$.
\end{definition}

Norms and distances stemming from the exponential Steiner support can now be defined as in definitions \ref{DefNormSupp} and \ref{DefDistSupp}, with the same behavior under isometries. Our experiments seem to indicate that exponential Steiner supports are more discriminative than the usual Steiner support, which we believe is due to their non-additivity (see section \ref{SectExperiment}).
	
	\section{Geometric formulas and bounds in the plane}
\label{SectGeo}
The formulas relating the support function to the perimeter and the area in the plane (see section \ref{convex}) allow us to produce new geometric bounds for ECT-induced norms and distances.

\subsection{The perimeter recovery formula and perimetric bounds}
We begin by noting that the edges of every simplicial complex in the plane bound either two, one, or no 2-dimensional faces. We call these edges \textbf{internal, boundary} and \textbf{isolated} edges, respectively (see figure \ref{FigSimpComp}).

\begin{definition}
	Let $S\subseteq \bb R^2$ be a simplicial complex. We define its $\textbf{perimeter}$ $\Peri(S)$ as the sum of the lengths of all boundary edges, plus two times the sum of the lengths of isolated edges.
\end{definition}

\begin{figure}
	\centering
	\includegraphics[scale = 0.6]{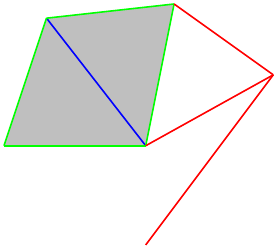}
	\caption{A flat simplicial complex. The internal edge is marked in blue, the boundary edges in green, and the isolated edges in red.}
	\label{FigSimpComp}
\end{figure}

Notice that, under this definition, the perimeter of a line segment is twice its length. This may seem strange, but picture a shrinking rectangle collapsing to a line: it makes sense that the perimeter remains continuous at the limit. One can picture lines as ``bigons'' under this perspective. 

We now extend this definition to any flat, compact, tame set $M$. We make use of the fact that any such set admits a $\mathcal C^1$ triangulation, that is, a $\mathcal C^1$ homeomorphism $f: S \to M$, where $S$ is a simplicial complex (see, for instance, \cite{pawlucki_strict_2024}).

\begin{definition}
	Let $M \subseteq \bb R^2$ be compact and tame, and let $f: S \to M$ be a $\mathcal C^1$ triangulation. We define the perimeter of $M$ as
	\[ \Peri(M):=\sum_{e\in \mathcal B(S)} \ell(f(e)) + 2\sum_{e \in \mathcal I(S)} \ell(f(e)) \, ,\]
	where $\ell$ denotes the usual length of a $\mathcal C^1$ curve, and $\mathcal B(S)$ and $\mathcal I(S)$ are the sets of boundary and isolated edges of $S$ respectively.
\end{definition}

It is easy to see that this definition does not depend on the chosen triangulation by invariance of length under reparametrization. We can now state the main result of this section, a rephrasing of Crofton's theorem in the plane (see, for instance, \cite{hug_integral_2020}) under the ECT language:

\begin{theorem}[Perimeter recovery]
	\label{PeriRecovery}
	Let $M \subseteq \bb R^2$ be compact and tame, and let $a \in \bb R^2$. Then:
	\[ \int_{\bb S^1} \int_{\bb R} (\ECT(M)(v,t)- \chi(M) \ECT(a)(v,t)) dt dv=\Peri(M)\, .\]
	In particular, \[ \int_{\bb S^{n-1}} h_M(v) dv=\int_{\bb S^{n-1}} S_M(v) dv=\Peri(M)\, . \]
\end{theorem}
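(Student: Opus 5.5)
We reduce the statement to the convex identity (\ref{PeriSupport}) by additivity. First we get rid of the point $a$. By proposition \ref{IsoSupport} (or directly, as in its proof), replacing $a$ by the origin changes the inner integral by $\chi(M)\la v,a\ra$, up to the sign convention $v\mapsto -v$. Since $\int_{\bb S^1}\la v,a\ra dv=0$, the double integral does not depend on $a$. Taking $a=0$ and substituting $v\mapsto -v$, which preserves the measure on $\bb S^1$, the left-hand side becomes $\int_{\bb S^1} h_M(v)dv$. Taking $a=s(M)$ (or using $S_M=h_M-\la\cdot,\chi(M)s(M)\ra$) gives the same value for $\int S_M$. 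It therefore suffices to prove
\[\int_{\bb S^1} h_M(v)\,dv=\Peri(M).\]

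Both sides are additive in a suitable sense.
\begin{itemize}
\item The map $M\mapsto \int_{\bb S^1}h_M$ is a valuation by proposition \ref{SuppAddExt}.
\item For $\Peri$, we first treat the case where $M$ is a simplicial complex $S$. Write $S$ by inclusion--exclusion over its closed simplices, which are convex. Their intersections are faces, hence again simplices. By general additivity of $h$ (theorem \ref{Groemer} and proposition \ref{SuppAddExt}), $\int h_S=\sum_\sigma c_\sigma \int h_\sigma$ for integer coefficients $c_\sigma$. More concretely, use the Euler-type formula $h_S=\sum_{\sigma}(-1)^{\dim\sigma}\cdot(\text{local correction})$. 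The clean route is as follows: for a convex polygon or segment $K$, (\ref{PeriSupport}) gives $\int h_K=\Peri(K)$. Here a segment has perimeter twice its length, consistent with $h_K(v)+h_K(-v)$ integrating to $2\ell$, and a point has $\int h=\int\la v,p\ra=0$.
\item Build $S$ by adding simplices one at a time. Adding a triangle $T$ along a subcomplex $T\cap S'$ changes $\int h$ by $\Peri(T)-\int h_{T\cap S'}$, and $\int h_{T\cap S'}$ is the sum of twice the lengths of the shared edges (vertices contribute $0$). Adding an edge contributes $2\ell$ minus $0$.
\item Check that $\Peri$ changes in exactly the same way. Gluing a triangle along $k$ existing edges adds its free edges as boundary edges. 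Each shared edge is either converted from isolated (contributing $2\ell$) to boundary (contributing $\ell$), or from boundary to internal (contributing $0$). In both cases the net change is $\Peri(T)-2\sum_{\text{shared}}\ell$, which matches.
\end{itemize}
Induction on the number of simplices then proves the identity for all planar simplicial complexes.

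For a general compact tame $M$ with a $\mathcal C^1$ triangulation $f:S\to M$, the curved triangles are not convex, so we approximate. This is the main obstacle. The first option is to invoke Crofton's formula directly: rewrite $\int h_M$ through the Mani-type expression in the remark after proposition \ref{SuppAddExt}, as $\int_{\bb S^1}\int_{\bb R}\big(\chi(M\cap H_{v,t})-\chi(M_{v,t}^{\circ}\text{-type limit})\big)$. The second option, which I would try first, is to observe that for a curved tame set, after an integration by parts in $t$, $\int h_M$ becomes $\frac12\int_{\bb S^1}\sum_{x}(\text{signed count of critical/tangency points})\cdot$ heights. Both sides are additive under cutting $M$ into pieces along finitely many segments, so we may reduce to a curved region bounded by one $\mathcal C^1$ arc and chords. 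For such a region, the contribution of the arc is its length by the Cauchy--Crofton formula, since the number of crossings of a line with the arc averages to $\frac{2}{\pi}\cdot$ length. As a fallback, approximate each curved edge by inscribed polygonal chains whose lengths converge. Using the stability of Euler curves in $L^1$, the integrals $\int h$ converge, which passes the polygonal identity to the limit.
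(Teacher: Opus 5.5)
Your removal of $a$ and your simplicial case are correct. The paper uses the same ingredients (the value of $\int h$ on simplices, plus additivity), but organises them as a single inclusion--exclusion over faces, isolated edges and isolated points rather than your simplex-by-simplex induction. Your bookkeeping of edges passing from isolated to boundary to internal checks out.

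The genuine gap is the general compact tame $M$. This is where the work lies, and you leave it as a list of options, none carried out. Your fallback asserts $\int_{\bb S^1}h_{S_n}\to\int_{\bb S^1}h_M$ for inscribed polygonal approximations ``by stability of Euler curves in $L^1$'', but no such general principle is available. By the very statement you are proving, $\int_{\bb S^1}h$ is the perimeter, and the perimeter is not continuous under Hausdorff or sup-norm perturbation: a fine zigzag close to a segment has much larger $\int h$. So any convergence argument has to use that the lengths of the approximants converge, which a generic stability statement cannot see.

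The paper supplies exactly this in theorem \ref{Approx}. The vertices on $\partial M$ are chosen so finely that every arc between consecutive vertices is convex, concave or straight, so $M$ and $S_n$ differ by finitely many convex ``bumps'' and ``dents''. Additivity then reduces $\int_{\bb S^1}(h_{S_n}-h_M)$ to two cases:
\begin{itemize}
\item convex sets, handled by (\ref{PeriSupport});
\item convex arcs, handled by lemma \ref{ConvexCurve}, via dominated convergence on the number of points where a line meets the arc.
\end{itemize}
The result is exactly $\Peri(S_n)-\Peri(M)$.

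Your Crofton option would actually give a complete proof with no approximation step at all, but the two facts that make it work are missing from what you wrote.
\begin{itemize}
\item First, additivity applied to $M=M_{v,t}\cup M_{-v,-t}$ gives $\ECT(M)(v,t)+\ECT(M)(-v,-t)=\chi(M)+\chi(M\cap H_{v,t})$, with $H_{v,t}=\{x:\la x,v\ra=t\}$. Hence $\int_{\bb S^1}h_M\,dv=\int_{\bb S^1_+}\int_{\bb R}\chi(M\cap H_{v,t})\,dt\,dv=\frac12\int_{\bb S^1}\int_{\bb R}\chi(M\cap H_{v,t})\,dt\,dv$. This hemisphere splitting also appears in the proof of lemma \ref{ConvexCurve}.
\item Second, take almost every line $H$, meaning one that misses the vertices of a $\mathcal C^1$ triangulation and crosses its edges transversally. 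Each component of $M\cap H$ is then either a segment with both endpoints on boundary edges, or a single point of an isolated edge. So $2\chi(M\cap H)$ equals the number of crossings with boundary edges plus twice the number of crossings with isolated edges.
\end{itemize}
The Cauchy--Crofton formula $\int_{\bb S^1}\int_{\bb R}\#(C\cap H_{v,t})\,dt\,dv=4\,\ell(C)$ then yields $\Peri(M)$ directly, for simplicial complexes and curved shapes alike.

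As written, however, your ``integration by parts'' and ``signed count of tangency points'' route is too vague to check. Your reduction to a region bounded by one $\mathcal C^1$ arc and chords also presupposes that $\Peri$ is additive under cutting, which you do not establish.
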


\begin{proof}
	We first prove the statement in the case where $M$ is a simplicial complex. By replacing $M$ by $M-a$, we may assume without loss of generality that $a$ is set at the origin, so we compute $\int_{\bb S^{n-1}} h_M(v) dv$.
	The formula holds for any simplex of dimension 2 or lower in the plane, by virtue of the convex case --see equation (\ref{PeriSupport})-- or direct computation. The goal now is to express $M$ as a union of some of its simplices and apply general additivity of the support function.
	
	Now, any flat simplicial complex may be written as a union of all its faces, isolated edges (both taken with their boundaries), and isolated points. All twofold intersections between members of this family are either vertices --which we may ignore since the value of the resulting integral will be zero-- or internal edges. The perimeter of each internal edge is therefore subtracted and thus will cancel with the side lengths provided by both faces each one bounds. All threefold and higher intersections can only be vertices, so again, we ignore them. Only the perimeters of isolated edges and lengths of boundary edges remain, and so the statement follows. 
	
	In the general case, theorem \ref{Approx} in the appendix shows that we can pick a sequence $\{S_n\}_{n \in \bb N}$ of simplicial complexes with $\Peri(S_n) \overset{n \to \infty}\rightarrow \Peri(M)$ and $\int_{\bb S^{n-1}} h_{S_n}(v) dv \overset{n \to \infty}\rightarrow \int_{\bb S^{n-1}} h_M(v) dv $. We have just proven that both sequences are the same, and so their limits are equal.
\end{proof}

\begin{coro}
	\label{NormGeoIneqs}
	Let $M\subseteq \bb R^2$ be compact and tame. Then,
	\[ \Peri(M) \leq \normsupp M_1 \]
	and
	\[ \Peri(M) \leq \normcent M_{1,1}\, . \]
	 If, in addition, $S_M(v) \geq 0$ for all $v \in \bb S^{n-1}$, then:
	\[\normsupp M_1=\Peri(M)\, .\] 
	If, furthermore, $\overline{\ECT}(v,t) \geq 0$ for all  $v \in \bb S^{n-1}$ and $t \in \bb R$, then
	\[ \normcent M_{1,1}=\Peri(M)\, . \]
	\qed
\end{coro}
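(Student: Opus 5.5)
The whole corollary should follow from Theorem \ref{PeriRecovery} by comparing an integral with the integral of its absolute value. For the support norm, write $\Peri(M)=\int_{\bb S^1} S_M(v)\,dv$, which is the second form of Theorem \ref{PeriRecovery}. Then
\[ \Peri(M)=\int_{\bb S^1} S_M(v)\,dv\leq \int_{\bb S^1}|S_M(v)|\,dv=\normsupp M_1 . \]
Equality holds exactly when $S_M\geq 0$ almost everywhere, which gives the third claim. (The Steiner support function is continuous up to finitely many issues, but pointwise nonnegativity already suffices.)

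For the Steiner centered norm, I will use the first form of Theorem \ref{PeriRecovery} with the auxiliary point chosen to be $a=s(M)$ when $\chi(M)\neq 0$. Then the integrand is precisely $\overline{\ECT}(M)(v,t)$. Since $v\mapsto -v$ is a measure-preserving bijection of $\bb S^1$, the sign convention on $v$ is irrelevant. This yields
\[ \Peri(M)=\int_{\bb S^1}\int_{\bb R}\overline{\ECT}(M)(v,t)\,dt\,dv\leq \int_{\bb S^1}\int_{\bb R}|\overline{\ECT}(M)(v,t)|\,dt\,dv=\normcent M_{1,1}. \]
The integrand is integrable because, for $M$ inside a ball of radius $R$ around the origin, it vanishes outside $[-R',R']$ for some $R'$, exactly as in the proof of Proposition \ref{DistanceBounds}. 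Equality holds when $\overline{\ECT}(M)\geq 0$, which gives the last claim.

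The main obstacle is the case $\chi(M)=0$. There $\overline{\ECT}(M)=\ECT(M)$ and $s(M)$ is set to the origin. The identity still holds, however: taking any $a$ in Theorem \ref{PeriRecovery}, the term $\chi(M)\ECT(a)$ vanishes, so $\int\int \ECT(M)\,dt\,dv=\Peri(M)$. Thus the same inequality goes through verbatim. Integrability of $\ECT(M)(v,\cdot)$ also holds here, since it is zero below the shape and tends to $\chi(M)=0$ above it. Similarly, for the support norm with $\chi(M)=0$, the definition gives $S_M=h_M-\langle\cdot,\chi(M)s(M)\rangle$, where the subtracted term is linear in $v$ and integrates to zero over $\bb S^1$. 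Hence $\int S_M=\int h_M=\Peri(M)$, as already stated in Theorem \ref{PeriRecovery}, and no separate argument is needed.
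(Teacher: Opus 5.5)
Your proof is correct and is the argument the paper intends: the corollary is stated without a written proof as an immediate consequence of Theorem~\ref{PeriRecovery}, by applying $\int f\le\int|f|$ to $S_M$ and, taking $a=s(M)$, to $\overline{\ECT}(M)$, with equality when the integrand is nonnegative. Your separate check of the case $\chi(M)=0$, where the subtracted linear term integrates to zero and $\overline{\ECT}(M)=\ECT(M)$, is care the paper leaves implicit; the remark about $v\mapsto -v$ is harmless but unnecessary, since the theorem and the centered norm use the same sign convention.
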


\begin{theorem}[Perimeter lower bound]
	\label{PeriLowBound}
	Let $M, N \subseteq \bb R^2$ be compact and tame shapes. Then:
	\[ |\Peri(M)-\Peri(N)|\leq d_1(M,N)\, ,  \]
	where $d_1$ is any of $\dsupp_1$, $\dcent_{1,1}$, or, if $M$ and $N$ have the same Euler characteristic, $\dist{1}{1}$.
\end{theorem}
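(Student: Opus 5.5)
The plan is to write both perimeters as integrals of the same kind of integrand via the perimeter recovery formula (Theorem \ref{PeriRecovery}). Then we subtract and bound the absolute value of an integral by the integral of the absolute value. The three choices of $d_1$ differ only in which integrand we compare, so we treat them in turn.

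\emph{Support distance.} By Theorem \ref{PeriRecovery}, $\Peri(M)=\int_{\bb S^1} S_M(v)\,dv$ and $\Peri(N)=\int_{\bb S^1} S_N(v)\,dv$. Hence
\[ |\Peri(M)-\Peri(N)| = \left|\int_{\bb S^1} (S_M(v)-S_N(v))\,dv\right| \leq \int_{\bb S^1}|S_M(v)-S_N(v)|\,dv=\dsupp_1(M,N). \]

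\emph{Steiner centered distance.} Suppose first that $\chi(M)\neq 0$ and $\chi(N)\neq 0$. Apply Theorem \ref{PeriRecovery} with $a=s(M)$ and $a=s(N)$ respectively. This gives $\Peri(M)=\int_{\bb S^1}\int_{\bb R}\overline{\ECT}(M)(v,t)\,dt\,dv$, and similarly for $N$. Subtracting and using $|\int\int f|\le \int\int|f|$ yields the bound by $\dcent_{1,1}(M,N)$.

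If $\chi(M)=0$, then $\overline{\ECT}(M)=\ECT(M)$, since the subtracted term vanishes. Theorem \ref{PeriRecovery} still applies, because its correction term $\chi(M)\ECT(a)$ is then zero for any $a$. So $\Peri(M)=\int\int \ECT(M)$, and the same argument goes through. The integrals over $\bb R$ converge because each integrand is a step function vanishing outside a bounded interval. This holds because the subtracted step function has the same limit $\chi$ at $+\infty$ and both vanish at $-\infty$. Hence Fubini/linearity is legitimate when splitting the integral of the difference into the difference of integrals.

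\emph{Classical distance.} When $\chi(M)=\chi(N)=\chi$, apply Theorem \ref{PeriRecovery} to both shapes with the same point $a$, say the origin. The terms $\chi\,\ECT(a)$ cancel in the difference, leaving
\[ \Peri(M)-\Peri(N)=\int_{\bb S^1}\int_{\bb R}(\ECT(M)-\ECT(N))\,dt\,dv. \]
Taking absolute values gives the bound by $\dist{1}{1}(M,N)$.

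The only delicate point is the integrability bookkeeping needed to split integrals of differences, together with the $\chi=0$ convention for the Steiner centered ECT. Both are handled by the bounded-support remark above, so no real obstacle is expected.
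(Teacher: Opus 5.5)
Your proof is correct and follows the paper's own argument. In each case you express the perimeters as double integrals via Theorem \ref{PeriRecovery}, subtract, and move the absolute value inside the integrals, with the common term $\chi\,\ECT(a)$ cancelling in the classical case. Your treatment of the support distance, directly through $\int_{\bb S^1} S_M(v)\,dv=\Peri(M)$, is slightly cleaner than the paper's: it avoids the alternative definition of $\overline{\ECT}$ that the paper invokes in a footnote when a shape has Euler characteristic zero.
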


\begin{proof}
	Writing $S_M(v)= \int_{\bb R} \overline{\ECT}(M)(v,t) dt$, theorem \ref{PeriRecovery} provides
	\[ |\Peri(M)-\Peri(N)|=\left| \int_{\bb S^1} \int_{\bb R} (\overline{\ECT}(M)(v,t)- \overline{\ECT}(N)(v,t) )dt dv\right|. \]
	The obvious inequalities obtained by introducing the absolute value inside either integral settle the result for the support and Steiner centered distance.\footnote{For the case of shapes with Euler characteristic zero, this equality holds with the alternative definition of $\overline{\ECT}$ of remark \ref{RmkAlternativeCentered}, since the double integral of $\ECT(a)(-v,t)-\ECT(\chi(M)s(M))(-v,t)$ vanishes. Employing this alternate definition settles the bound for the support distance in case any shape has Euler characteristic zero.} For the classical ECT-induced distance for shapes with equal Euler characteristic, we have, for an arbitrary $a \in \bb R^2$, \[\ECT(M)-\ECT(N)=(\ECT(M)-\chi(M)\ECT(a))-(\ECT(N)-\chi(N)\ECT(a)) \, ,\]
	so that
	\[ |\Peri(M)-\Peri(N)| =\left| \int_{\bb S^1} \int_{\bb R} ({\ECT}(M)(v,t)- {\ECT}(N)(v,t) )dt dv\right| \, ,\]
	and the same argument as before finishes the proof.
\end{proof}

The preceding corollary is particularly important, as it ensures that if two shapes have distinct perimeter, their support distance will be nonzero. As such, in the plane, the support distance is only at risk of misidentifying two distinct shapes if they have equal perimeter. Similar lower bounds can be obtained for other $p,q$ parameters by employing classic inequalities for $p$-norms.

Upper bounds for these distances are harder to come by, but there is a particularly nice result for simply connected shapes. We need a technical lemma first:
\begin{lemma}
	\label{SimpCompPositive}
	Let $M \subseteq \bb R^2$ be simply connected, compact and tame, and let $a$ be any point lying in its convex closure. Then,
	\[ \ECT(M)(v,t)-\ECT(a)(v,t) \geq 0 \]
	for all $v \in \bb S^1$ and $t \in \bb R$.
\end{lemma}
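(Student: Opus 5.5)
We must show $\chi(M_{v,t}) \geq \bb 1_{[\la a,v\ra,\infty)}(t)$; note $\ECT(a)(v,t)=1$ exactly when $t \geq \la a,v\ra$. We split into two cases according to the height $t$.

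\textbf{Case $t<\la a,v\ra$.} Here we need $\chi(M_{v,t})\geq 0$. The plan is to use that $M$ is simply connected and planar (implicitly connected, hence $\chi(M)=1$), so the sublevel set $M_{v,t}=M\cap H$, with $H$ a closed half-plane, has vanishing first homology. To see this, we will show that any loop in $M_{v,t}$ bounds in $M$: since $M$ is simply connected, the bounded complementary regions of the loop are contained in $M$ (a planar compact set is simply connected iff its complement is connected, by Alexander duality). Those regions lie inside the convex set $H$ because the loop does, so the loop already bounds in $M_{v,t}$. Concretely, we will argue via Alexander duality that $H_1(M_{v,t})=0$ iff $\bb R^2\setminus M_{v,t}$ is connected. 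This holds because $\bb R^2\setminus M_{v,t}=(\bb R^2\setminus M)\cup(\bb R^2\setminus H)$, where the first set is connected and unbounded, the second is a connected open half-plane, and every unbounded component of $\bb R^2 \setminus M$ meets the far region of $\bb R^2\setminus H$. Hence $\chi(M_{v,t})=b_0(M_{v,t})\geq 0$. Tameness guarantees finitely many components and makes Čech and singular homology agree.

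\textbf{Case $t\geq\la a,v\ra$.} Here we need $\chi(M_{v,t})\geq 1$, i.e.\ that $M_{v,t}$ is nonempty. Since $a\in\operatorname{cl}(M)$, we have $\min_{y\in M}\la y,v\ra=-h_{\operatorname{cl}(M)}(-v)\leq \la a,v\ra\leq t$, so some point of $M$ lies in $H$ and $b_0\geq1$. Combined with $b_1=0$ from the first case, this gives $\chi(M_{v,t})\geq 1$.

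The main obstacle is the first case: rigorously establishing that intersecting a simply connected planar tame set with a half-plane yields no $1$-dimensional homology. I expect this to need the Alexander duality / complement-connectedness argument above, together with tameness to avoid pathologies such as the Warsaw circle. We will also state explicitly the implicit connectedness assumption on $M$ that comes with "simply connected."
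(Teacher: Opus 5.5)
Your proof is correct, and its overall structure matches the paper's. Everything reduces to showing $H_1(M_{v,t})=0$ for every half-plane sublevel set. After that, $\chi(M_{v,t})=b_0(M_{v,t})\geq 1$ whenever $M_{v,t}\neq\emptyset$, and $a\in\operatorname{cl}(M)$ guarantees $M_{v,t}\neq\emptyset$ whenever $\ECT(a)(v,t)=1$. Your case split on $t$ versus $\la a,v\ra$ is just the contrapositive of the paper's last paragraph.

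The difference is in the key step.

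\textbf{The paper's route.} It applies Mayer--Vietoris to the closed cover $M=M_{v,t}\cup M_{-v,-t}$. The overlap lies in a line, so it has trivial $H_1$. Exactness of $H_1(M_{v,t}\cap M_{-v,-t})\to H_1(M_{v,t})\oplus H_1(M_{-v,-t})\to H_1(M)=0$ then kills the middle term.

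\textbf{Your route.} You use Alexander duality to trade $H_1=0$ for connectedness of the complement. You then note that $\bb R^2\setminus M_{v,t}=(\bb R^2\setminus M)\cup(\bb R^2\setminus H)$ is a union of two connected sets that meet far away from the bounded set $M$.

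\textbf{Comparison.} The Mayer--Vietoris argument is shorter and intrinsic to $M$: it only needs that a compact tame subset of a line has no $H_1$. Yours carries more overhead, namely the \v{C}ech versus singular comparison via tameness, and working with ranks or field coefficients to sidestep torsion. In exchange, it makes the geometric reason visible and does not depend on the cut being straight. The same argument gives $H_1(M\cap C)=0$ for any closed tame convex $C$, since each component of $\bb R^2\setminus C$ is unbounded and so meets the connected set $\bb R^2\setminus M$. Mayer--Vietoris with a curved cut would instead have to deal with $H_1$ of the overlap.

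Finally, neither argument uses connectedness of $M$, only $H_1(M)=0$, so you do not need to add it as an assumption.
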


\begin{proof}
First, we prove that $\ECT(M)(v,t)$ never drops below $1$ whenever $M_{v, t}\neq \emptyset$. Since the Euler characteristic of a flat shape equals its number of connected components minus the dimension of its first homology group, the only way for the Euler characteristic of a sublevel to drop to zero is if $M_{v,t}$ has non-trivial one-dimensional homology. Write $M=M_{v,t} \cup M_{-v,-t}$. The Mayer-Vietoris exact sequence for $M_{v,t}$ and $M_{-v,-t}$ reads:
\[ \cdots \to H_1(M_{v,t} \cap M_{-v,-t})\to H_1(M_{v,t}) \oplus H_1(M_{-v,-t})\to H_1(M)\to \cdots \]
The first group is zero since $M_{v,t} \cap M_{-v,-t}$ is contained in a line. The last group is zero by hypothesis. By exactness, $H_1(M_{v,t})$ must be zero. Thus, $\ECT(M)(v,t)\geq 1\geq \ECT(a)(v,t)$ whenever $M_{v,t}\neq \emptyset$.

Finally, since $a$ lies in the convex closure of $M$, $\ECT(a)(v,t)=0$ whenever $M_{v,t}= \emptyset$, so in the case $M_{v,t}=\emptyset$, the inequality holds with both sides equal to $0$.
\end{proof}

\begin{theorem}[Perimeter upper bound for simply connected shapes]
	\label{PeriUpper}
	Let $M$ and $N$ be two flat, simply connected, compact and tame shapes. Then,
	\[ \dist{1}{1}(M,N)\leq \Peri(M)+ \Peri(N) +4 \dd(\operatorname{cl}(M), \operatorname{cl}(N)), \]
	where $\dd$ denotes the usual euclidean distance.
	
	If, in addition, each Steiner point of $M$ and $N$ lies inside the convex closure of $M$ and $N$ respectively,
	\[ \dsupp_1(M,N)\leq \Peri(M)+\Peri(N)\, , \]
	\[ \dcent_{1,1}(M,N) \leq \Peri(M)+\Peri(N) \, . \]
\end{theorem}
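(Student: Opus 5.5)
The plan is to insert a point as an intermediary and use the triangle inequality in $L^1(\bb S^1\times\bb R)$. Both $M$ and $N$ are simply connected and nonempty, so $\chi(M)=\chi(N)=1$ and $\dist{1}{1}(M,N)$ is defined. Pick points $a\in\operatorname{cl}(M)$ and $b\in\operatorname{cl}(N)$ realizing $\|a-b\|=\dd(\operatorname{cl}(M),\operatorname{cl}(N))$; they exist by compactness. Then
\[ |\ECT(M)-\ECT(N)|\leq |\ECT(M)-\ECT(a)|+|\ECT(a)-\ECT(b)|+|\ECT(b)-\ECT(N)|\, . \]
By lemma \ref{SimpCompPositive}, the first and third integrands are nonnegative, so we may drop the absolute values. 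Theorem \ref{PeriRecovery}, applied with $\chi(M)=1$, then shows that their double integrals are exactly $\Peri(M)$ and $\Peri(N)$.

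The middle term is a direct computation. $\ECT(a)(v,t)-\ECT(b)(v,t)$ is $\pm1$ exactly for $t$ between $\la a,v\ra$ and $\la b,v\ra$, so its inner integral is $|\la a-b,v\ra|$. Integrating over $\bb S^1$ gives $\|a-b\|\int_0^{2\pi}|\cos\theta|\,d\theta=4\|a-b\|$. This produces the $4\dd(\operatorname{cl}(M),\operatorname{cl}(N))$ term and finishes the first inequality.

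For the Steiner centered distance, the hypothesis $s(M)\in\operatorname{cl}(M)$ together with $\chi(M)=1$ gives $\overline{\ECT}(M)=\ECT(M)-\ECT(s(M))\geq0$ by lemma \ref{SimpCompPositive}, and likewise for $N$. Then
\[ |\overline{\ECT}(M)-\overline{\ECT}(N)|\leq \overline{\ECT}(M)+\overline{\ECT}(N)\, , \]
and by corollary \ref{NormGeoIneqs} (or directly by theorem \ref{PeriRecovery}) the right-hand side integrates to $\Peri(M)+\Peri(N)$. For the support distance, $S_M(v)=\int_{\bb R}\overline{\ECT}(M)(-v,t)\,dt\geq0$, so $|S_M-S_N|\leq S_M+S_N$, which integrates to $\Peri(M)+\Peri(N)$ by theorem \ref{PeriRecovery}. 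Alternatively, this follows from proposition \ref{DistanceBounds} with $p=1$.

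The main point to check is the sign argument. Lemma \ref{SimpCompPositive} requires the reference point to lie in the convex closure, which is why $a$ and $b$ are taken as the nearest points of the two closures, and why the Steiner point hypothesis is needed in the second part. One further detail is the sign convention $\ECT(\cdot)(-v,t)$ in $S_M$. It does not matter, since nonnegativity holds for every direction. The remaining computations are routine.
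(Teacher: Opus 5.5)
Your proof is correct and follows essentially the same route as the paper: the triangle inequality through points $a\in\operatorname{cl}(M)$, $b\in\operatorname{cl}(N)$, lemma \ref{SimpCompPositive} to drop the absolute values, theorem \ref{PeriRecovery} for the perimeter terms, the computation that two points are at $(1,1)$-ECT distance $4\|a-b\|$, and, for the second part, nonnegativity of $\overline{\ECT}$ combined with bounding a distance by the sum of the two norms. The only cosmetic difference is that you fix a minimizing pair $a,b$ using compactness of the convex closures, whereas the paper takes an infimum at the end.
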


\begin{proof}
	For the latter two inequalities, the hypotheses in combination with lemma \ref{SimpCompPositive} ensure that $\overline{\ECT}(M)(v,t)$ and $\overline{\ECT}(N)(v,t)$ are non negative, so corollary \ref{NormGeoIneqs} ensures that the support and Steiner centered norms of $M$ and $N$ are equal to their respective perimeters. The result then follows from the distance between two shapes being less than or equal to the sum of their norms.
	
	For the first inequality, let $a \in \operatorname{cl}(M)$ and $b \in \operatorname{cl}(N)$ be arbitrary. The triangle inequality yields
	\[ \dist{1}{1}(M,N)\leq \dist{1}{1}(M,a)+\dist{1}{1}(a,b)+\dist{1}{1}(b,N)\, .\]
	Lemma \ref{SimpCompPositive} allows us to drop the absolute values when computing the first and last distances, and so they are equal to the perimeter of $M$ and $N$ by theorem \ref{PeriRecovery}. A computation shows that in the plane, the $(1,1)$-ECT-induced distance between two points is four times their euclidean distance. Taking the infimum over points in the convex closures of $M$ and $N$ yields the result.
\end{proof}

\subsection{A generalization of Barbier's theorem}

Recall from theorem \ref{meanwidth} and the discussion beforehand that, in any dimension, $h_K(v)+h_K(-v)$ is the width of a convex set $K$ when seen from direction $v$, and so $\frac{2}{\mu(\bb S^{n-1})}\int_{\bb S^{n-1}} h_K(v) dv$ is its mean width. These quantities lack a direct geometrical interpretation when $K$ is not convex\footnote{Although, see definition \ref{DefGenRadius}.}, but we can nonetheless extend the definitions and state a small generalization of Barbier's theorem.

\begin{definition}
	Given a compact, tame shape $M$, we define its \textbf{support width} in direction $v$ as
	\[ w_M(v):=h_M(v)+h_M(-v) \, .\]
	Its \textbf{mean support width} is, then, given by the integral
	\[ \overline{W}(M):= \frac{2}{\mu(\bb S^{n-1})} \int_{\bb S^{n-1}} h_M(v) dv \, .\]
	A shape $M$ is said to have \textbf{constant support width} if $w(v)$ is a constant function.
\end{definition}

\begin{theorem}[Generalized Barbier's theorem]
	Let $M\subseteq \bb R^2$ be a compact, tame shape. Then,
	\[ \Peri(M)=\pi \overline W(M). \]
	In particular, all flat shapes of constant support width $w$ have the same perimeter, $\pi w$.
	\label{BarbierGen}
\end{theorem}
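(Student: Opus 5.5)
The plan is to read Theorem \ref{BarbierGen} off directly from Theorem \ref{PeriRecovery}. In the plane we have $\mu(\bb S^1)=2\pi$, so by definition
\[ \overline W(M)=\frac{2}{2\pi}\int_{\bb S^1} h_M(v)\,dv=\frac{1}{\pi}\int_{\bb S^1} h_M(v)\,dv\, . \]
The second statement of Theorem \ref{PeriRecovery} gives $\int_{\bb S^1} h_M(v)\,dv=\Peri(M)$. Multiplying by $\pi$ then yields $\pi\overline W(M)=\Peri(M)$, which is the first claim.

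For the constant width statement, I will use the symmetry of the sphere under the antipodal map $v\mapsto -v$, which preserves the measure on $\bb S^1$. This gives
\[ \int_{\bb S^1} h_M(v)\,dv=\int_{\bb S^1} h_M(-v)\,dv\, , \]
and therefore
\[ \int_{\bb S^1} w_M(v)\,dv=2\int_{\bb S^1} h_M(v)\,dv=2\Peri(M)\, . \]
If $w_M\equiv w$ is constant, the left-hand side equals $2\pi w$. Hence $\Peri(M)=\pi w$, which matches $\overline W(M)=w$.

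I do not expect a real obstacle, since all the content sits in the perimeter recovery formula, which is assumed. The only points needing care are bookkeeping. First, the $\bb S^{n-1}$ in Theorem \ref{PeriRecovery} must be read as $\bb S^1$. Second, the sign convention in the definition of $h_M$, which uses $-x$ in the ECT, differs from the integrand in the first display of Theorem \ref{PeriRecovery}; this difference is harmless because the antipodal change of variables leaves the integral over $\bb S^1$ unchanged.
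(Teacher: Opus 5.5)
Your proof is correct and matches the paper's, which also just combines the definition of mean support width (with $\mu(\bb S^1)=2\pi$) with the perimeter recovery formula $\int_{\bb S^1} h_M(v)\,dv=\Peri(M)$. Your antipodal-invariance argument showing that constant support width $w$ forces $\overline W(M)=w$ is a correct expansion of the ``in particular'' clause, which the paper leaves implicit.
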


\begin{proof}
	Follows immediately from the definition of mean support width and theorem \ref{PeriRecovery}.
\end{proof}

As an example, see figure \ref{FigHollowedCircle}: a circle for which an arc has been inwardly reflected along a chord. Explicitly computing its support function shows that it has constant support width, equal to two times its diameter, and its perimeter is indeed equal to $\pi$ times its diameter, counted twice (recall that for 1-dimensional shapes, the perimeter was defined as twice their length). Other trivial examples may be made by considering disjoint Reuleaux polygons or by picking a Reuleaux polygon and peppering it with Reuleaux polygon-shaped holes.

\begin{figure}
	\centering
	\includegraphics[scale=0.2]{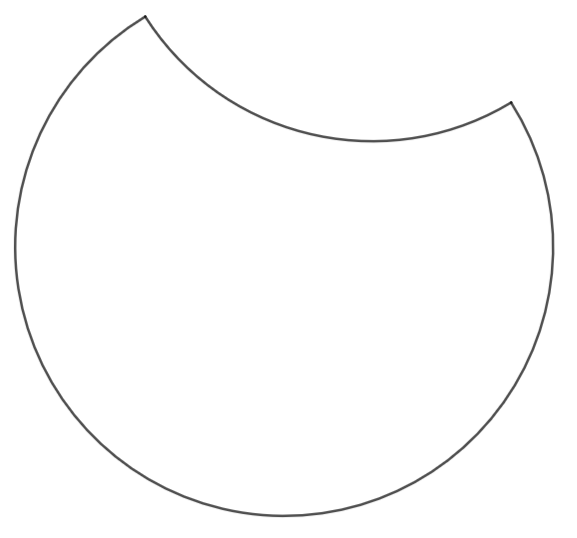}
	\caption{A shape of constant support width.}
	\label{FigHollowedCircle}
\end{figure}

\subsection{Area and curvature}

We can also relate the area and external angles of convex polygons to their ECT. Moreover, this provides another interesting property of the Steiner point.

\begin{theorem}
	\label{Area}
	Let $K \subseteq \bb R^2$ be a compact convex set. Suppose that for each $v\in \bb S^{n-1}$ there is a unique ``last-hit'' point $x(v)\in K$, that is, such that $h_K(v)=\langle x(v), v \rangle$. Suppose also that $h_K$ is twice differentiable and $K$ has a well defined radius of curvature $\rho(v)$ at each $x(v)$. Then, for any $a \in \bb R^2$:
	\[ \int_{\bb S^{1}} \left(\int_{\bb R} (\ECT(K)(v,t)-\ECT(a)(v,t)) dt \right)^2 dv= \operatorname{Area}(K)+\frac{1}{2}\int_{\bb S^{1}} \|x(v)-a\|^2  dv \, .\]
	In the case where $K$ is a convex polygon, label $\alpha_1,...,\alpha_n$ the exterior angles of $K$ and $d_1,...,d_n$ the distances from $a$ to each vertex. Then:
	\[ \int_{\bb S^{1}} \left(\int_{\bb R} (\ECT(K)(v,t)-\ECT(a)(v,t)) dt \right)^2 dv=\operatorname{Area}(K)+\frac{1}{2}\sum_{i=0}^n \alpha_i d_i^2\, .  \]
	The unique point $a\in \bb R^2$ minimizing these values is $s(K)$, the Steiner point of $K$.
\end{theorem}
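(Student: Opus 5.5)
The plan is to reduce the left-hand side to an $L^2$ norm of the support function of the translated set $K-a$, and then use the planar formulas of theorem \ref{SupportArea}. First, as in the proof of lemma \ref{SuppMatchesConvex}, the Euler curves are explicit: $\ECT(K)(v,t)=\bb 1_{[-h_K(-v),\infty)}(t)$ and $\ECT(a)(v,t)=\bb 1_{[\la a,v\ra,\infty)}(t)$. Hence the inner integral equals $h_K(-v)-\la a,-v\ra=h_{K-a}(-v)$; equivalently, combine definition \ref{DefSupport} with proposition \ref{IsoSupport}, using $\chi(K)=1$. The substitution $v\mapsto -v$ on $\bb S^1$ then turns the left-hand side into $\int_{\bb S^1} h(v)^2\,dv$, where $h:=h_{K-a}$. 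The set $K-a$ still satisfies the regularity hypotheses and has the same area and the same radii of curvature as $K$, with last-hit points $x(v)-a$.

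For the smooth case, parametrize $v=(\cos\theta,\sin\theta)$. Two standard facts do the work. The first is the classical identity $x(v)-a = h(v)\,v + h'(v)\,v^\perp$: differentiate $h(v)=\la x(v)-a,v\ra$ and use that $x'(\theta)$ is tangent, hence orthogonal to $v$. This gives $\|x(v)-a\|^2=h^2+h'^2$. The second comes from theorem \ref{SupportArea}, which gives $\operatorname{Area}(K)=\frac12\int h(h+h'')$. Integrating by parts on the circle (no boundary terms) yields $\operatorname{Area}(K)=\frac12\int h^2-\frac12\int h'^2$. Adding $\frac12\int\|x(v)-a\|^2=\frac12\int (h^2+h'^2)$ gives exactly $\int h^2$, which proves the first formula.

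For a convex polygon with vertices $x_1,\dots,x_n$, I would compute directly rather than approximate. The circle splits into the normal cones of the vertices, which are arcs of length $\alpha_i$. On the $i$-th arc, $h(v)=\la x_i-a,v\ra$ is smooth, so $h^2+h'^2=\|x_i-a\|^2=d_i^2$ there, and $\frac12\int(h^2+h'^2)=\frac12\sum\alpha_i d_i^2$. The area identity $\operatorname{Area}(K)=\frac12\int h^2-\frac12\int h'^2$ still holds, which I would show by integrating by parts arc by arc. The function $h$ is continuous and piecewise smooth, and on each arc $h''=-h$, so the only contributions come from the boundary terms. These are the jumps of $h'$ at the edge normals $u_j$, and each jump equals the length $\ell_j$ of edge $j$ (the difference of the $v^\perp$-components of consecutive vertices). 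This produces $\frac12\sum_j \ell_j h(u_j)$, which is the classical triangle decomposition of the area from the point $a$. I expect this jump bookkeeping to be the main obstacle: the signs must be tracked carefully, and $a$ may lie outside $K$, in which case the triangle areas are signed. A fallback is to approximate by the smooth bodies $K+\varepsilon \bb B^2$ and pass to the limit.

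Finally, the minimizer claim follows from proposition \ref{SteinerMinimizer} with $\chi(K)=1$. After the substitution $v\mapsto -v$, the quantity in the theorem is exactly the functional minimized there, so its unique minimizer is $s(K)$. Alternatively, expanding $\int (h_K(v)-\la v,a\ra)^2\,dv$ gives a strictly convex quadratic in $a$, whose critical point is $\frac1\pi\int v\,h_K(v)\,dv=s(K)$ by theorem \ref{SteinerIntegralFormula}.
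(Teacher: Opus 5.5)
Your proposal is correct. The reduction to $\int_{\bb S^1}h_{K-a}(v)^2\,dv$, the smooth case (integration by parts in theorem \ref{SupportArea} together with $\|x(v)-a\|^2=h^2+h'^2$), and the minimizer claim via proposition \ref{SteinerMinimizer} all coincide with the paper's proof.

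\textbf{The polygon case is where the routes differ.} The paper performs no computation there. It asserts that the smooth formula persists ``even if the hypotheses are not met'' and reads off $\frac12\int\|x(v)-a\|^2\,dv=\frac12\sum_i\alpha_i d_i^2$, since $x(v)$ is a fixed vertex on an arc of length $\alpha_i$. That is short, but it leaves unjustified precisely the identity $\operatorname{Area}(K)=\frac12\int h^2-\frac12\int h'^2$ for a support function with corners. Your arc-by-arc integration by parts supplies it.

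The bookkeeping you are worried about is harmless. Order the vertices counterclockwise and let $\theta_j$ be the angle of the outer normal $u_j$ of the edge $[x_j,x_{j+1}]$. The last-hit point is $x_j$ just before $\theta_j$ and $x_{j+1}$ just after, and $x_{j+1}-x_j=\ell_j u_j^\perp$. Hence $h'(\theta_j^+)-h'(\theta_j^-)=\ell_j$, and the boundary terms give $\int h^2-\int h'^2=\sum_j\ell_j h(u_j)$. This equals $2\operatorname{Area}(K)$ for every $a$, because $h(u_j)=\la x_j-a,u_j\ra$ is the signed distance from $a$ to the line of edge $j$. The signed triangle areas therefore handle $a\notin K$ automatically. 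Your route is longer, but it actually proves the polygon formula rather than inferring it.

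\textbf{Two minor points.}

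First, differentiating $x(\theta)$ presupposes that $\theta\mapsto x(\theta)$ is differentiable, which is not among the hypotheses. The paper instead cites $\nabla h_K(v)=x(v)$ (\cite{schneider2013convex}, corollary 1.7.3). Alternatively, $\theta\mapsto h(\theta)-\la x(\theta_0)-a,v(\theta)\ra$ is nonnegative and vanishes at $\theta_0$. Its derivative therefore vanishes there, giving $h'(\theta_0)=\la x(\theta_0)-a,v^\perp(\theta_0)\ra$ using only differentiability of $h$.

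Second, your fallback would not work as stated. $K+\varepsilon\bb B^2$ has support function $h_K+\varepsilon$, which keeps its corners at the edge normals, and in those directions the last-hit set is a whole edge. So it violates the hypotheses of the first formula. An approximation argument would need strictly convex approximants with smooth support functions, e.g.\ mollify $h_K$ in $\theta$ and add $\varepsilon$. Your direct computation makes this unnecessary.
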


\begin{proof}
	Replace $K$ by $K-a$ so that the left hand side is $\int_{\bb S^{1}} h_K(v)^2 dv$.
	Theorem \ref{SupportArea} provides the following equalities:
	\[ \operatorname{Area}(K)=\frac{1}{2} \int_{\bb S^{1}} h_K(v) \rho(v) dv\, ,\]
	and
	\[ \rho(v)=h_K(v)+h''_K(v)\, .\]
	Substitution and integration by parts shows that the area of $K$ may be written as
	\begin{align*}
		\operatorname{Area}(K)&=\frac{1}{2}\left( \int_{\bb S^{1}} h_K(v)^2 dv +\int_{\bb S^{1}} h_K(v) h_K''(v) dv \right) \\
		&=\frac{1}{2}\left( \int_{\bb S^{1}} h_K(v)^2 dv -\int_{\bb S^{1}} h_K'(v)^2 dv \right).
	\end{align*}
	If we show $\int_{\bb S^{1}} \|x(v)\|^2 dv = \int_{\bb S^{1}} h_K(v)^2 dv +\int_{\bb S^{1}} h_K'(v)^2 dv  $, the first formula will be proven.
	
	 It is a general fact (see \cite{schneider2013convex}, corollary 1.7.3) that the gradient of $h_K$ evaluated at $v$ is equal to $x(v)$. Evaluating $h'_K$ on the circle amounts to taking the directional derivative of $h_K(v)$ in direction $v^\perp$, the orthogonal of $v$ (with a counter-clockwise 90º turn). As such,
	
	\[ \int_{\bb S^{1}} h_K(v)^2 dv +\int_{\bb S^{1}} h_K'(v)^2 dv=\int_{\bb S^{1}} \langle x(v),v \rangle^2 +\langle x(v), v^\perp \rangle^2 dv=\int_{\bb S^1} \|x(v)\|^2 dv\, .\]
	
	In the case of a convex polygon, the formula can be proven directly by a somewhat tedious computation. Instead, we remark how it follows from the formula just proven even if the hypotheses are not met: indeed, the last point hit at a direction $v$ is a certain vertex of the polygon, and it is so for a whole range of directions with size equal to the exterior angle of the vertex.
	
	The statement about the Steiner point is proposition \ref{SteinerMinimizer}.
\end{proof}

\begin{coro}
	Under the same hypotheses of theorem \ref{Area},
	\[ \normsupp {K}_2 = \operatorname{Area}(K)+\frac{1}{2}\int_{\bb S^{n-1}} \|x(v)-s(K)\|^2 dv \leq \operatorname{Area}(K) +\pi \max_{x \in \partial K} \|x-s(K)\|\, .\] \qed
\end{coro}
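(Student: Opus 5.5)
The plan is to specialize theorem \ref{Area} to the choice $a=s(K)$ and then bound the remaining integral by a supremum.

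First, since $K$ is nonempty, compact and convex, $\chi(K)=1$. The alternative form of the Steiner support function therefore gives
\[ S_K(v)=\int_{\bb R}\left(\ECT(K)(-v,t)-\ECT(s(K))(-v,t)\right)dt \, .\]
Consequently
\[ \int_{\bb S^1} S_K(v)^2\,dv=\int_{\bb S^1}\left(\int_{\bb R}\left(\ECT(K)(-v,t)-\ECT(s(K))(-v,t)\right)dt\right)^2 dv .\]
The change of variables $v\mapsto -v$ is an isometry of $\bb S^1$, so it preserves the measure. After it, this is exactly the left-hand side of theorem \ref{Area} with $a=s(K)$. That theorem then yields
\[ \int_{\bb S^1} S_K(v)^2\,dv=\operatorname{Area}(K)+\frac12\int_{\bb S^1}\|x(v)-s(K)\|^2\,dv .\]
The left-hand side is $\left(\normsupp{K}_2\right)^2$ by definition \ref{DefNormSupp}. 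The equality in the statement should therefore be read for the squared norm; I would say so explicitly and prove that version.

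For the inequality, each last-hit point $x(v)$ lies on $\partial K$. Hence $\|x(v)-s(K)\|\le \max_{x\in\partial K}\|x-s(K)\|$ for every $v$, and this maximum exists because $\partial K$ is compact. Integrating over $\bb S^1$, whose length is $2\pi$, gives
\[ \frac12\int_{\bb S^1}\|x(v)-s(K)\|^2\,dv\le \pi\max_{x\in\partial K}\|x-s(K)\|^2 .\]
This is the statement's bound, again with the square understood on the maximum, which is what dimensional consistency requires.

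No step is genuinely hard, since the corollary is a direct consequence of theorem \ref{Area}. The only points needing care are bookkeeping. One is the sign convention $-v$ versus $v$ in the ECT, which is absorbed by the antipodal symmetry of the measure. The other is making the intended powers (the squared norm and the squared maximum distance) explicit so that the stated formulas are correct.
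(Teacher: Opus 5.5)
Your proposal is correct and is exactly the immediate argument the paper leaves implicit (the corollary ends with \qed and no proof): set $a=s(K)$ in Theorem~\ref{Area}, recognize the left-hand side as $\bigl(\normsupp{K}_2\bigr)^2$, and bound $\|x(v)-s(K)\|$ by its maximum over $\partial K$ on a circle of length $2\pi$. Your two corrections, squaring the norm and squaring the maximum, are right and necessary: for a disk of radius $R>1$ centred at the origin, $\bigl(\normsupp{K}_2\bigr)^2=2\pi R^2=\operatorname{Area}(K)+\pi R^2$, which exceeds the printed bound $\operatorname{Area}(K)+\pi\max_{x\in\partial K}\|x-s(K)\|=\pi R^2+\pi R$.
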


\section{Isometry invariant features and pseudodistances}
\label{SectIsometry}
\subsection{Steiner barcodes}
As we have proven in proposition \ref{LinEquiv}, the Steiner support is a translation invariant feature of a shape. As such, the pseudodistance stemming from it is also invariant under translations. The next natural step would be to define similar features and pseudodistances that are invariant under all isometries, that is, that allow shapes to be compared even when arbitrary isometries are applied to any of them. Proposition \ref{LinEquiv} hints at a way to do so: the Steiner support of a shape that has been, for instance, rotated, is nothing but the original Steiner support, but evaluated after applying the inverse of the rotation. As such, if we picture the Steiner support $S_M: \bb S^{n-1} \to \bb R$ as a surface lying over the sphere, the overall shape of the surface is unchanged, in particular, its level sets are isomorphic. As such, an analysis of the topology of its sublevel sets, be it via persistent homology or an Euler curve, will produce a rotation invariant feature. Here, we have chosen the persistent homology approach as it is the most informative of the two, but any persistence-based calculation on sublevel sets should do. For readers unacquainted with persistence and its role in TDA, we refer them to \cite{oudot2015persistence} for an introduction.

\begin{definition}
	\label{DefBarcodes}
	Let $M$ be a compact, tame shape. The $i$-th \textbf{Steiner barcode} of $M$ is defined as the $i$-th persistent homology barcode of the following filtration of $\bb S^{n-1}$:
	$$\{ v \in \bb S^{n-1} | S_M(v) \leq t \}_{t \in \bb R} \, .$$
	
	Given two compact, tame shapes $M$ and $N$, their $i$-th \textbf{bottleneck pseudodistance} $\dbottle(M,N)$ is the value of the bottleneck distance between their respective $i$-th Steiner barcodes.
\end{definition}

\begin{theorem}[Isometry invariance of Steiner barcodes]
	\label{BarcodesInvariance}
	Steiner barcodes of a shape are invariant under euclidean isometries. Bottleneck pseudodistances between two shapes remain invariant under euclidean isometry of any of the involved shapes.
\end{theorem}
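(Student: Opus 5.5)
The plan is to reduce everything to Proposition \ref{LinEquiv}. That result says that for an isometry $T=L+a$ we have $S_{T(M)}=S_M\circ L^{-1}$ as functions on $\bb R^n$, hence also on $\bb S^{n-1}$, since $L^{-1}$ maps the sphere to itself.

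The first step is to compare sublevel sets. For every $t\in\bb R$,
\[ \{v\in\bb S^{n-1} \mid S_{T(M)}(v)\le t\}=\{v \mid S_M(L^{-1}v)\le t\}=L\left(\{w\in\bb S^{n-1}\mid S_M(w)\le t\}\right). \]
So the restriction $L|_{\bb S^{n-1}}$ is a homeomorphism of the sphere that carries the $t$-sublevel set of $S_M$ onto the $t$-sublevel set of $S_{T(M)}$, simultaneously for all $t$. Because it is one global map independent of $t$, it commutes with the inclusions between sublevel sets for $t\le t'$. Applying $H_i$ therefore gives isomorphisms $H_i(\{S_M\le t\})\to H_i(\{S_{T(M)}\le t\})$ that commute with the structure maps. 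This yields an isomorphism of persistence modules, and by the uniqueness of barcode decompositions the $i$-th Steiner barcodes of $M$ and $T(M)$ coincide.

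The second claim then follows at once. If $T_1,T_2$ are isometries, the bottleneck distance between the barcodes of $T_1(M)$ and $T_2(N)$ equals the one between the barcodes of $M$ and $N$, because the barcodes themselves are unchanged. It is worth remarking that, unlike the support distance, no common isometry is needed here, since each shape's barcode is intrinsically invariant.

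The only point needing care is that the persistence module is well defined, meaning its barcode exists and is finite. For this I would use tameness: $S_M$ is built from ECT integrals of a tame shape. I would either assume it is regular enough (for example q-tame or definable) for the barcode to make sense, which the definition already implicitly presupposes, or note that the argument gives an isomorphism of persistence modules regardless, so any well-defined barcode invariant agrees. I expect this well-definedness issue, not the invariance itself, to be the main obstacle. I would handle it by a remark rather than a full proof, since Definition \ref{DefBarcodes} already takes the barcode's existence for granted.
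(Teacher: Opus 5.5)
Your proposal is correct and follows essentially the same route as the paper's proof. Both use Proposition \ref{LinEquiv} to see that the fixed orthogonal map $L$ (equivalently $L^{-1}$), restricted to $\mathbb{S}^{n-1}$, carries the sublevel filtration of $S_M$ onto that of $S_{T(M)}$; this induces an isomorphism of persistence modules, hence equal barcodes and unchanged bottleneck distances. Your remark about the well-definedness of the barcode is a reasonable caveat that the paper leaves implicit.
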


\begin{proof}
	Given an isometry $T$, the sublevel sets of $S_M(v)$ and of $S_{T(M)}(v)$ are related under a fixed automorphism $L^{-1}$ of $\bb S^{n-1}$ by proposition \ref{LinEquiv}, and hence $L^{-1}$ provides an isomorphism between filtrations. This descends to an isomorphism at the persistent homology level, and the assertions follow.
\end{proof}

As an immediate consequence of bottleneck stability, we have the following bound:
\begin{theorem}
	Let $M$ and $N$ be compact, tame shapes. Then:
	\[ \dbottle(M,N) \leq \inf_{T_1, T_2} \dsupp_\infty(T_1(M),T_2(N))\, ,  \]
	where $T_1$ and $T_2$ range over all euclidean isometries.
\end{theorem}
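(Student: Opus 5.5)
The bound will follow by combining the isometry invariance of Steiner barcodes (theorem \ref{BarcodesInvariance}) with the classical stability theorem for persistence barcodes of sublevel set filtrations. That theorem says that for two real-valued functions $f,g$ on the same (triangulable, compact) space $X$, with tame sublevel set filtrations, the bottleneck distance between their $i$-th barcodes is bounded by $\|f-g\|_\infty$. Here $X=\bb S^{n-1}$, $f=S_{T_1(M)}$ and $g=S_{T_2(N)}$.

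Fix arbitrary euclidean isometries $T_1,T_2$. By theorem \ref{BarcodesInvariance}, the $i$-th Steiner barcode of $T_1(M)$ coincides with that of $M$, and likewise for $T_2(N)$ and $N$. Hence
\[ \dbottle(M,N)=\dbottle(T_1(M),T_2(N))\, . \]
The right-hand side is the bottleneck distance between the $i$-th sublevel set barcodes of $S_{T_1(M)}$ and $S_{T_2(N)}$ on $\bb S^{n-1}$. Stability then gives
\[ \dbottle(T_1(M),T_2(N))\leq \sup_{v\in \bb S^{n-1}}|S_{T_1(M)}(v)-S_{T_2(N)}(v)|=\dsupp_\infty(T_1(M),T_2(N))\, , \]
using the $p=\infty$ convention for the support distance. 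The left-hand side does not depend on $T_1,T_2$, so taking the infimum over all pairs of isometries yields the claim.

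The only step needing care is checking the hypotheses of the stability theorem: the Steiner supports must be regular enough that their sublevel set persistence modules are q-tame, and their difference must be bounded. Boundedness follows from compactness: for a shape contained in a ball of radius $R$ around the origin, the integrand defining $h_M(v)$ is supported in $[-R,R]$ and bounded by $\max_{v,t}|\ECT(M)(v,t)|+|\chi(M)|$. This maximum is finite for a tame compact set, since by tameness only finitely many homeomorphism types occur among the sublevel sets. Subtracting the linear term $\la v,\chi(M)s(M)\ra$ keeps the function bounded. For q-tameness, I would invoke the version of stability for arbitrary bounded functions on a compact triangulable space, which only requires q-tameness and yields the bound on the bottleneck distance of the persistence diagrams. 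Alternatively, I would note that $S_M$ is a definable (in fact continuous, by the continuity of Euler curves in $v$ cited earlier) function on the sphere, so its sublevel sets have finitely many critical values and the modules are pointwise finite dimensional. I expect this regularity verification to be the main technical obstacle; the inequality itself is then immediate.
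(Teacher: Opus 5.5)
Your proposal is correct and is essentially the paper's proof: the paper combines Theorem~\ref{BarcodesInvariance} with the stability bound for sublevel-set diagrams of continuous functions on $\bb S^{n-1}$ (Corollary 3.6 of \cite{oudot2015persistence}), then takes the infimum over $T_1,T_2$, just as you do. The paper simply assumes the needed regularity of $S_M$ (continuity, hence q-tameness), so your boundedness and tameness checks add care the paper omits rather than changing the argument.
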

\begin{proof}
	This follows from the general bound for the bottleneck distance between diagrams stemming from sublevel sets of continuous functions (see for instance corollary 3.6 of \cite{oudot2015persistence}) and theorem \ref{BarcodesInvariance}.
\end{proof}

Of course, all of this can be defined with either the usual or the exponential Steiner support (definition \ref{DefExpSupp}). Wasserstein distances may be taken instead of the more usual bottleneck.

\subsection{The generalized radius}
We mention an isometry invariant, real-valued feature of a shape that may be of geometric interest:
\begin{definition}
	\label{DefGenRadius}
	We call \textbf{generalized radius} of a shape $M$ the following quantity:
	\[ \frac{1}{\mu(\bb S^{n-1})} \int_{\bb S^{n-1}} S_M(v) dv.\]
\end{definition}

\begin{remark}
	One can see that this is equal to $\frac{1}{\mu(\bb S^{n-1})} \int_{\bb S^{n-1}} h_M(v) dv$, so there is actually no need of computing the Steiner point and Steiner support. In $\bb R^2$, it equals the perimeter of a shape divided by $2\pi$, by theorem \ref{PeriRecovery}.
\end{remark}

The reason for the name is the following:
\begin{prop}
	When positive, the generalized radius of a shape $M\subseteq \bb R^n$ is the radius of the $n$-ball which is at minimum 2-support distance from $M$.
\end{prop}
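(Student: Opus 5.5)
We need to show: among all $n$-balls $B$, the 2-support distance $\dsupp_2(M,B)$ is minimized by balls of radius $r^*=\frac{1}{\mu(\bb S^{n-1})}\int_{\bb S^{n-1}} S_M(v)\,dv$, provided $r^*>0$. The plan is to reduce this to a one-variable least-squares problem, using that the Steiner support of a ball is constant.

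First we compute the Steiner support of a ball. Let $B_r(c)$ be the closed ball of radius $r\geq 0$ centred at $c$. It is convex, so by lemma \ref{SuppMatchesConvex} its support function is the classical one, $h_{B_r(c)}(v)=r+\la v,c\ra$ on $\bb S^{n-1}$. By symmetry its Steiner point is $c$; this also follows from theorem \ref{SteinerIntegralFormula}, using that $\int_{\bb S^{n-1}} v\,dv=0$ and $\int_{\bb S^{n-1}} v\la v,c\ra\,dv=\mu(\bb B^n)c$. Since $\chi(B_r(c))=1$, we get $S_{B_r(c)}(v)\equiv r$. Proposition \ref{LinEquiv} confirms that this does not depend on $c$. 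So every ball of radius $r$ has the same Steiner support, the constant function $r$, and the position of the ball plays no role.

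Next we minimize over $r$. We have
\[ \dsupp_2(M,B_r(c))^2=\int_{\bb S^{n-1}}(S_M(v)-r)^2\,dv=\int_{\bb S^{n-1}} S_M^2\,dv-2r\int_{\bb S^{n-1}} S_M\,dv+r^2\mu(\bb S^{n-1}). \]
This is a strictly convex quadratic in $r$. Over all of $\bb R$ its unique minimizer is $r^*$, the mean of $S_M$ over the sphere, i.e.\ the orthogonal projection of $S_M$ onto the constants in $L^2(\bb S^{n-1})$. Radii must satisfy $r\geq 0$, which is why the positivity hypothesis is needed. If $r^*>0$, the constrained minimum is attained exactly at $r=r^*$. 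If $r^*\leq 0$, the minimum over $r\geq 0$ would instead be at $r=0$, i.e.\ a point. Finally, by the remark following definition \ref{DefGenRadius}, $r^*$ equals the generalized radius: $\int_{\bb S^{n-1}} \la v,\chi(M)s(M)\ra\,dv=0$, so $\int_{\bb S^{n-1}} S_M=\int_{\bb S^{n-1}} h_M$.

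The only step that needs some care is the Steiner support of a ball. The classical Steiner point of a ball must match the weighted one here, which it does because $\chi=1$, and the vanishing of $\int_{\bb S^{n-1}} v\,dv$ should be stated explicitly. I would also record the degenerate case $r=0$: a point has zero Steiner support, which is consistent with the formula $S_{B_0(c)}\equiv 0$. Everything else is a direct calculation.
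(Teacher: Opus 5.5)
Your proposal is correct and follows the paper's argument: a ball of radius $r$ has constant Steiner support $r$, so $\dsupp_2(M,\cdot)^2$ reduces to $\int_{\bb S^{n-1}}(S_M(v)-r)^2\,dv$, and the minimizer of this quadratic is the mean of $S_M$, i.e.\ the generalized radius. Your explicit computation of the ball's Steiner point and your handling of the constraint $r\geq 0$, which explains the positivity hypothesis, fill in details the paper leaves implicit.
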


\begin{proof}
	The Steiner support of a ball $\bb B_R$ of radius $R$ is the constant function on the sphere with value $R$. Thus:
	\[ \dsupp_2(M, \bb B_R)^2=\int_{\bb S^{n-1}}\left( S_M(v)-R \right)^2 dv.\]
	The best constant approximation to $S_M(v)$ in $L^2$ norm is given by its mean value, precisely the definition of the generalized radius.
\end{proof}

The generalized radius may be more amenable to interpretation than Steiner barcodes, as it relates it directly with the radius of a ball, although it is a much less informative feature. 

\section{Implementation and experimental results}
\label{SectExperiment}
We have run some experiments to assess if Steiner supports and barcodes can be useful in shape classification tasks. We summarize our results in this section.
\subsection{MPEG-7 dataset}
\paragraph{Procedure and computational implementation}  For our first experiments, we have picked the MPEG-7 dataset\cite{MPEG}, as several papers in topological data analysis use it as a case study. In particular, we closely followed the experiments run in the recent preprint \cite{yang_topological_2026} that also proposes ECT-based isometry invariant features and uses the MPEG-7 image dataset as a benchmark for ECT-based classification methods. We refer in particular to table S1 in loc. cit. for an overview.

MPEG-7 consists of 1400 black and white pictures, organized in 70 categories of 20 similar looking images each. The goal, either with a selected 10-class subset of the MPEG-7 dataset\footnote{Consisting of all images from the classes ``apple'', ``bell'', ``bottle'', ``car'', ``classic'', ```cup'', ``device0'', ``face'', ``Heart'', and ``key''.}, or the whole 70-class set, is to train a Support Vector Machine (SVM) to classify images in their correct category. We will do so by extracting their Steiner supports and barcodes.

Parameters in our implementation were taken as closely as possible to the ones in \cite{yang_topological_2026}. However, comparing the reported accuracies would be misleading: for ease of implementation, we do not work with the raw black-and-white image data, preprocessing them instead by extracting the outlines of the shapes, which improves accuracy over binary image data. We have also run training with raw ECT data, as can be seen in the last rows of table \ref{AccTable}, to more fairly compare our approach with a classical one. Our preprocessing differs from the one taken in loc. cit., and indeed, ECT data is achieving higher accuracy values for us than the ones reported therein (compare, for instance, the value 98.69 \% in the first column of table \ref{AccTable}, with the reported 86.23 \%). In summary, higher accuracies than those of loc. cit. are to be expected and are not indicative of better performance.

To compute ECTs, we have opted for using the \texttt{ect} python package \cite{ayub_ect_2026}. For barcode computation, we use the \texttt{GUDHI} package \cite{gudhi:urm}.
The pipeline we have implemented is as follows:

\begin{enumerate}
	\item The images are preprocessed by extracting their outlines as arrays of two dimensional points, and picking one out of five points in each outline to reduce their size and jaggedness. Some padding is added beforehand to ensure the outlines are computed correctly.
	\item Making use of the \texttt{ect} package, the outlines of each image are stored as a complex consisting of cycles, the coordinates are centered and normalized to a radius of 1, and their ECTs are computed on 100 evenly spaced directions and on 100 height values.
	\item For each complex, its Steiner support is computed by numerical integration. Its zeroth Steiner barcode is computed in \texttt{GUDHI} by first constructing a simplicial model of the circle whose vertices are the 100 evenly spaced directions the Steiner support is computed at, setting the birth time of each vertex $v$ to be $S_M(v)$, and finally computing the zero-dimensional persistent homology of the resulting filtered simplicial complex.
	\item Each Steiner support is vectorized by listing all 100 values taken along the circle (starting at the point $(-1,0)$ and advancing counterclockwise). Each barcode is vectorized as its first three \emph{persistence landscapes}, a common vectorization technique for barcodes (see \cite{bubenik2020persistence} for an introduction). Each of the three landscapes consists of a vector of length 100.
	\item A random 70/30 (resp. 90/10) train/test split is done on the 10 classes (resp. 70 classes) data, and a SVM is trained with a linear kernel with the training data. Prediction is done on the test data and accuracy is computed.
	\item Step 5 is repeated 500 (resp. 10) times with random splits, and average accuracy is computed.
\end{enumerate}

We have run this experiment with either the usual Steiner support, the exponential Steiner support\footnote{The parameter function in these experiments was chosen to be $f(x)=x^2$.}, or both, and either providing the SVM with the support data, landscapes, or both. Finally, we also train SVMs on the whole ECT data we have extracted for support computation, each vectorized as a vector of length 10,000. We also repeat the ECT training with a reduced version of ECT data, computed only at 40 directions and 20 height values each, so as to match our least lightweight method. We report in the first columns of table \ref{AccTable} the average accuracy for each test and method.

\begin{table}[]
	\centering
	\begin{tabular}{l|l|l|l|l|l|}
		\cline{2-6}
		& \begin{tabular}[c]{@{}l@{}}Feature\\ dimension\end{tabular} & 10 classes        & 70 classes        & \begin{tabular}[c]{@{}l@{}}10 classes\\ (misaligned)\end{tabular} & \begin{tabular}[c]{@{}l@{}}70 classes\\ (misaligned)\end{tabular} \\ \hline
		\multicolumn{1}{|l|}{Steiner Support}              & 100                                                         & 94.99 \%          & 53.71 \%          & 59.00 \%                                                          & 12.86 \%                                                          \\ \hline
		\multicolumn{1}{|l|}{Steiner Landscapes}           & 300                                                         & 81.63 \%          & 32.07 \%          & 81.58 \%                                                          & 31.36 \%                                                          \\ \hline
		\multicolumn{1}{|l|}{Steiner Support + Landscapes} & 400                                                         & 94.97 \%          & 59.79 \%          & 84.87 \%                                                          & 34.43 \%                                                          \\ \hhline{|=|=|=|=|=|=|}
		\multicolumn{1}{|l|}{Exponential Support}          & 100                                                         & 94.56 \%          & 63.07 \%          & 65.34 \%                                                          & 16.43 \%                                                          \\ \hline
		\multicolumn{1}{|l|}{Exp. Landscapes}              & 300                                                         & 73.73 \%          & 36.43 \%          & 71.06 \%                                                          & 34.43 \%                                                          \\ \hline
		\multicolumn{1}{|l|}{Exp. Supp. + Landscapes}      & 400                                                         & 95.50 \%          & 69.57 \%          & 75.01 \%                                                          & 36.14 \%                                                          \\ \hhline{|=|=|=|=|=|=|}
		\multicolumn{1}{|l|}{Both Supports}                & 200                                                         & 97.09 \%          & 73.93 \%          & 78.45 \%                                                          & 33.21 \%                                                          \\ \hline
		\multicolumn{1}{|l|}{Both Landscapes}              & 600                                                         & 93.71 \%          & 57.21 \%          & \textit{\textbf{95.05 \%}}                                        & 55.43 \%                                                          \\ \hline
		\multicolumn{1}{|l|}{Both Supports + Landscapes}   & 800                                                         & \textit{97.06 \%} & \textit{78.43 \%} & 91.62 \%                                                          & \textit{\textbf{56.86 \%}}                                        \\ \hhline{|=|=|=|=|=|=|}
		\multicolumn{1}{|l|}{Raw ECT data}                 & 10,000                                                      & \textbf{98.69 \%} & \textbf{85.86 \%} & 79.06 \%                                                          & 45.86 \%                                                          \\ \hline
		\multicolumn{1}{|l|}{Raw ECT data (reduced)}       & 800                                                         & 98.50 \%          & 84.79 \%          & 77.58 \%                                                          & 43.36 \%                                                          \\ \hline
	\end{tabular}
	\caption{Average accuracy of SVMs trained with either support data, landscapes, or both, of either a 10 class subset of the MPEG-7 dataset or the full 70 class dataset. Also compared with training over the raw ECT data. The first two columns report the results on the original MPEG-7 dataset, and the latter two with a randomly misaligned version of it. The highest accuracy achieved for each experiment is marked in bold. The highest accuracy among our proposed methods is marked in italics. }
	\label{AccTable}
\end{table}

\paragraph{Results and discussion} On the 10 classes test, our most lightweight methods --Steiner supports and exponential supports-- achieve an accuracy of over 94 \%, while suffering a severe drop-off in the 70 classes test. Comparing Steiner supports with exponential supports, the latter seem to be more accurate, in line with our intuition of them being more discriminative thanks to their non-additivity. Moreover, training on data provided by both supports improves accuracy by quite a margin, which is particularly apparent on the 70 class data. Thus, information provided by both supports does not seem to be redundant.

Training the SVM purely on the barcodes' landscapes yields worse results across the board: indeed, they are the least informative. Combining supports with their barcodes generally boosts accuracy, albeit by a small margin. Still, our best methods are outperformed in both the 10 classes and 70 classes test by raw ECT data, which is more informative. Indeed: support functions aggregate much of the information of the ECT, and thus are prone to information loss.

However, an important remark is that MPEG-7's images are mostly prealigned, meaning that images in the same set usually are oriented consistently. This has allowed us to utilize the Steiner support functions and the ECT as input to the SVM, which are not rotation invariant, to get good results. For misaligned datasets, we expect to see a significant drop-off in accuracy for ECT and support-based methods, but not for barcode-based methods, since Steiner barcodes are isometry invariant. To test this, we have produced a new dataset consisting of random rotations of the contours of MPEG-7 images and tested our methods in this misaligned dataset. Average accuracy evaluations can be found in the last two columns of table \ref{AccTable}. As expected, support-based methods perform much worse on the misaligned dataset, and are outperformed by landscapes. The accuracy of purely landscape-based methods has not seen a drop-off, as was to be expected: small deviations are seen, but this is likely due to imprecisions in our implementation. The combined landscape data works best for this dataset, outperforming raw ECT data. The accuracy gain is not so apparent on the 70 class data, as, again, information loss is a bigger factor in large datasets.

An important remark to keep in mind is that in our preprocessing, we encoded each image as a collection of contours, essentially seeing the images as ``hollow''. The addition of higher-dimensional information to the dataset could have a notable effect on accuracy. 

\paragraph{Addition of noise}

To illustrate theorem \ref{ThmPointCloud} (robustness of the Steiner support under point cloud noise) and to show how Steiner supports could be better suited than the ECT for certain noisy datasets, we introduce noise in the form of point clouds to the shape outlines. We generate, at random, between 1 and 200 points for each shape and add them to the complex before computing their ECTs and support functions. Average accuracies are reported in table \ref{AccNoisy}. We do not report results for the exponential support: it is highly sensitive to this kind of noise and achieves very poor accuracy. Results show that ECT accuracy falls dramatically when noise is added, but accuracy of our proposed Steiner support methods remains roughly the same, as expected.
 
 \begin{table}[]
 	\centering
 	\begin{tabular}{l|l|l|l|l|l|}
 		\cline{2-6}
 		& \begin{tabular}[c]{@{}l@{}}Feature\\ dimension\end{tabular} & \begin{tabular}[c]{@{}l@{}}10 classes\\ (noisy)\end{tabular} & \begin{tabular}[c]{@{}l@{}}70 classes\\ (noisy)\end{tabular} & \begin{tabular}[c]{@{}l@{}}10 classes\\ (noisy, mis.)\end{tabular} & \begin{tabular}[c]{@{}l@{}}70 classes\\ (noisy, mis.)\end{tabular} \\ \hline
 		\multicolumn{1}{|l|}{Steiner Support}              & 100                                                         & 94.73 \%                                                     & 51.64 \%                                                     & 58.71 \%                                                           & 13.50 \%                                                           \\ \hline
 		\multicolumn{1}{|l|}{Steiner Landscapes}           & 300                                                         & 79,31 \%                                                     & 31.93 \%                                                     & 77.47 \%                                                           & 30.57 \%                                                           \\ \hline
 		\multicolumn{1}{|l|}{Steiner Support + Landscapes} & 400                                                         & \textit{\textbf{94.75 \%}}                                   & \textit{\textbf{58.86 \%}}                                   & \textit{\textbf{81.88 \%}}                                         & \textit{\textbf{32.43 \%}}                                         \\ \hhline{|=|=|=|=|=|=|}
 		\multicolumn{1}{|l|}{Raw ECT data}                 & 10,000                                                      & 51.94 \%                                                     & 27.57 \%                                                     & 17.65 \%                                                           & 3.79 \%                                                            \\ \hline
 		\multicolumn{1}{|l|}{Raw ECT data (reduced)}       & 800                                                         & 50.77 \%                                                     & 30.21 \%                                                     & 16.40 \%                                                           & 2.79 \%                                                            \\ \hline
 	\end{tabular}
 	\caption{Average accuracy of SVMs trained with either support data, landscape data, or both, of either a 10 class subset of the MPEG-7 dataset or the full 70 class dataset with added point cloud noise. Also compared with training over the raw ECT data. The first two columns report the results on the original MPEG-7 dataset, and the latter two with a randomly misaligned version of it. The highest accuracy achieved for each experiment is marked in bold. The highest accuracy among our proposed methods is marked in italics.}
 	 \label{AccNoisy}
 \end{table}

 \subsection{Triangles and Squares dataset}
 
 To further compare support-based methods with the traditional ECT, we have devised the following experiment. A dataset of simplicial complexes is generated, each consisting of either between 1 and 10 triangles, or between 1 and 10 squares (with their interiors) of varying sizes\footnote{Specifically, the radii of the inscribing circles of each polygon are randomized to be between 0.05 and 0.15 units long.}, distributed randomly within a disc. All triangles are consistently oriented, as well as all squares. The goal is to train SVMs on either ECT or support data to accomplish one of the two following tasks:
 \begin{itemize}
 	\item Predict the number of shapes (connected components) of each image. (Quantity test)
 	\item Classify the image as either consisting of triangles or of squares. (Type test)
 \end{itemize}
 Additionally, two levels of difficulty can be added when generating the dataset:
 \begin{itemize}
 	\item A random rotation is applied to each shape.
 	\item Each shape has a 50 \% chance of being ``hollowed out'', that is, consisting only of its boundary.
 \end{itemize}
 
 We generate 10 complexes for each shape quantity number between 1 and 10, for both triangles and squares, obtaining a dataset of size 200. ECT and support data for each complex is computed via the \texttt{ect} package. Random 70/30 train/test splits are done to train and test 100 SVMs on different kinds of data, for either the quantity task or the type task. We report mean accuracy results for both tests in table \ref{TriSqTable}.
 
 \begin{table}[]
 	\centering
 		\begin{tabular}{cl|l|l|l|l|}
 			\cline{3-6}
 			\multicolumn{1}{l}{}                                 &                            & Original          & \begin{tabular}[c]{@{}l@{}}Random\\ rotations\end{tabular} & \begin{tabular}[c]{@{}l@{}}Random\\ hollow\end{tabular} & \begin{tabular}[c]{@{}l@{}}Random\\ rot. + holl.\end{tabular} \\ \hline
 			\multicolumn{1}{|c|}{\multirow{7}{*}{Quantity test}} & Steiner Support            & 55.52 \%          & 62.68 \%                                                   & 41.93 \%                                                & 39.45 \%                                                      \\ \cline{2-6} 
 			\multicolumn{1}{|c|}{}                               & Steiner Supp. + Landscapes & 53.95 \%          & 63.33 \%                                                   & 42.90 \%                                                & 40.63 \%                                                      \\ \cline{2-6} 
 			\multicolumn{1}{|c|}{}                               & Exp. Support               & \textbf{100 \%}   & \textbf{100 \%}                                            & 28.48 \%                                                & 24.53 \%                                                      \\ \cline{2-6} 
 			\multicolumn{1}{|c|}{}                               & Exp. Supp. + Landscapes    & \textbf{100 \%}   & \textbf{100 \%}                                            & 35.58 \%                                                & 29.92 \%                                                      \\ \cline{2-6} 
 			\multicolumn{1}{|c|}{}                               & Both Supports              & \textbf{100 \%}   & \textbf{100 \%}                                            & 60.13 \%                                       & 62.30 \%                                                      \\ \cline{2-6} 
 			\multicolumn{1}{|c|}{}                               & Both Supp. + Landscapes    & \textbf{100 \%}   & \textbf{100 \%}                                            & \textbf{60.75} \%                                                & \textbf{63.37 \%}                                             \\ \cline{2-6} 
 			\multicolumn{1}{|c|}{}                               & Raw ECT data               & \textbf{100 \%}   & \textbf{100 \%}                                            & 33.23 \%                                                & 32.80 \%                                                       \\ \hhline{|=|=|=|=|=|=|}
 			\multicolumn{1}{|c|}{\multirow{7}{*}{Type test}}     & Steiner Support            & 95.78 \%          & 46.18 \%                                                   & 96.08 \%                                                & 47.78 \%                                                      \\ \cline{2-6} 
 			\multicolumn{1}{|c|}{}                               & Steiner Supp. + Landscapes & 95.97 \%          & \textbf{55.02 \%}                                          & 95.82 \%                                                & 48.78 \%                                                      \\ \cline{2-6} 
 			\multicolumn{1}{|c|}{}                               & Exp. Support               & 85.77 \%          & 43.80 \%                                                   & 92.77 \%                                                & 48.95 \%                                                      \\ \cline{2-6} 
 			\multicolumn{1}{|c|}{}                               & Exp. Supp. + Landscapes    & 85.60 \%          & 43.85 \%                                                   & 92.05 \%                                                & 49.38 \%                                                      \\ \cline{2-6} 
 			\multicolumn{1}{|c|}{}                               & Both Supports              & \textbf{98.33 \%} & 45.30 \%                                                   & \textbf{99.55 \%}                                       & 50.77 \%                                                      \\ \cline{2-6} 
 			\multicolumn{1}{|c|}{}                               & Both Supp. + Landscapes    & 97.58 \%          & 51.73 \%                                                   & 99.40 \%                                                & 50.88 \%                                                      \\ \cline{2-6} 
 			\multicolumn{1}{|c|}{}                               & Raw ECT data               & 86.68 \%          & 50.10 \%                                                   & 81.38 \%                                                & \textbf{51.83 \%}                                             \\ \hline
 		\end{tabular}
 	\caption{Mean accuracy results for the quantity and type tests in the Triangles and Squares dataset.}
 	\label{TriSqTable}
 \end{table}

\paragraph{Quantity test, discussion} In the quantity test, both ECT data and methods which incorporate exponential supports perform with perfect accuracy in the original dataset, as well as in the one with random rotations. For the ECT, this is not surprising: the number of connected components of the dataset is precisely equal to its Euler characteristic, since in these datasets all connected components are contractible. The exponential support is more sensitive than the Steiner support to the addition of connected components, and in this case this seems to be enough to obtain the number of connected components. The Steiner support performs the poorest in these datasets, likely because it is mostly blind to small shapes. Indeed, repeating experiments with a narrower range of sizes shows better performance of Steiner support across the board in the quantity test, which can indeed go up to {100 \%} for a small enough size deviation in the original and randomly rotated datasets. Exaggerating shape sizes yields, on the contrary, worse results. 

Addition of hollow shapes is challenging for Euler characteristic-based methods, since now there is no direct correlation between it and the number of connected components. The ECT and exponential support, who were the most reliable, see the most downfall of accuracy in this regard, to around 30 \%. The Steiner support seems to perform somewhat better on average, and combining both supports almost doubles the efficiency of raw ECT data. We speculate the following: since Steiner supports encapsulate some information about the perimeter of shapes, they should be more sensitive to hollowed shapes (whose perimeter is counted twice). On the other hand, exponential supports should be, on the contrary, more sensitive to non-hollowed shapes, since addition of a hollow shape only modifies each Euler curve in a small interval, while a non-hollow shape modifies it since the first time it is hit by the rising hyperplane and onward. Thus, each support provides a different kind of information from which the total number of shapes can be more easily inferred. Indeed, repeating the experiment with a narrower size range (0.1 to 0.15 units instead of 0.05 to 0.15) shows a dramatic improvement for the ``both supports'' method, jumping to an accuracy of 94.30 \%, while raw ECT data remains roughly the same, at 36.68 \% accuracy. Rotating the dataset in addition to hollowing out shapes does not appear to have a significant impact on accuracy on either of the methods for the quantity task.

\paragraph{Type test, discussion} In the original dataset, support-based methods, apart from those purely relying on the exponential support, perform with great accuracy and surpass the ECT. Surprisingly, with random shapes being hollowed out, the exponential support improves its performance, while other support methods remain roughly the same. This contrasts with ECT data, whose accuracy is somewhat lower when hollow shapes are added. No method is reliable in the type test with random rotations added, all accuracies being comparable to classification by random chance.
	
	\section{Beyond the Euler characteristic: a case for Betti supports}
\label{SectionBetti}
To motivate the definitions in this section, let us point out that the Steiner point, useful as it has been to us, has some quirks. Perhaps the most damning is that $s(M)$ may not lie in the shape $M$. In fact, it can be arbitrarily far from $M$, as the next example shows.

\begin{example}[Lollipop]
	\label{Lollipop}
	Consider the following ``lollipop'', the union of $A$, a circumference with a diameter, and $B$, a segment extending said diameter:
	\begin{figure}[h]
		\centering
		\includegraphics[scale = 0.6]{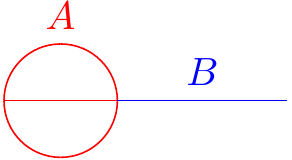}
		\caption{A lollipop $M$: the union of a circumference with a diameter ($A$) and a line segment ($B$).}
	\end{figure}
	
	To compute its Steiner point, we make use of the weighted additivity formula (proposition \ref{SteinerWeightAdd}) for $A$ and $B$. Both $A$ and $B$ are rotationally symmetric, so by the invariance of the Steiner point under isometries, $s(A)$ is the center of the circumference and $s(B)$ is the midpoint of $B$. Furthermore, $A\cap B$ is the unique point of intersection, so its Steiner point is itself. Taking into account the Euler characteristic of all involved shapes, the weighted additivity formula reads:
	\[ -s(A \cup B)+s(A \cap B)=-s(A)+s(B)\, , \]
	and so:
	\[ s(M)=s(A \cup B)=s(A)+s(A\cap B)-s(B)\, .  \]
	Now, $s(A \cap B)-s(B)$ is a vector from the midpoint of $B$ to its left-most end, and so to obtain $s(M)$, one starts at $s(A)$, the center of the circle, and travels to the left half the length of segment $B$. As long as $B$ is longer than the diameter of the circle, $s(M)$ will lie outside of $M$. The longer $B$ is, the further away from $M$ it will lie. 
\end{example}

From the minimizer point of view (see proposition \ref{SteinerMinimizer}), the behavior in this example can be blamed on the fact that the innermost integrand of the expression
\[  \int_{\bb S^{n-1}} \left( \int_{\bb R} (\ECT(M)(-v,t)-\chi(M)\ECT(a)(-v,t)) dt\right)^2 dv\] is free to change sign, and so a point seeking to minimize this value can sometimes exploit sign changes in Euler curves, placing itself far from the shape to allow cancellations to take place before adding its weight to the computation.

One way to solve this is to replace the Euler characteristic by non-negatively valued topological invariants: Betti numbers. This also comes with other benefits: the Euler characteristic coalesces all Betti numbers into a single, possibly negative, numerical invariant. By unraveling this information and studying shapes at the Betti numbers level we may improve descriptiveness and see all of our previous constructions in a new light: as agglomerations of homological information. This is, of course, heavily related to the Persistent Homology Transform (PHT) and Betti Curve Transform (BCT), more informative but more computationally demanding variants of the ECT \cite{turner2014persistent}. In the following, we will assume homology computations are done with coefficients in $\bb R$, so as to eliminate torsion issues.

\begin{definition}
	Let $M$ be a compact, tame shape. For each $v\in \bb S^{n-1}$, consider its sublevel filtration in direction $v$: $\{ M_{v,t} \}_{t \in \bb R}$. Let $\beta_i(v)(t)=\dim H_i(M_{v,t})$ be the $i$-th Betti curve of said filtration. The $i$-th \textbf{Betti support} of $M$ is the function $\beta_ih_M:\bb S^{n-1} \to \bb R$ given by
	\[\beta_ih_M(v):= \int_{\bb R} (\beta_i(-v)(t)-\beta_i(M)\bb 1_{[0, +\infty)}(t)) dt \, . \]
\end{definition}

\begin{coro}
	\label{SuppCombBetti}
	For any compact, tame shape $M$:
	\[ h_M(v)=\sum_{i}(-1)^i \beta_ih_M(v)\, . \]
	In particular, if $M$ is convex, $\beta_0h_M$ is its support function and all other Betti supports are zero.
\end{coro}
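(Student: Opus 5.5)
The identity follows from expanding the Euler characteristic of each sublevel set as an alternating sum of Betti numbers and using linearity of the integral. Throughout, Betti numbers are computed with coefficients in $\bb R$, as the paper specifies.

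First, for every $v\in\bb S^{n-1}$ and $t\in\bb R$, the sublevel set $M_{-v,t}$ is compact and tame, so it has the homotopy type of a finite CW complex. Hence only finitely many of its Betti numbers are nonzero, and
\[ \ECT(M)(-v,t)=\chi(M_{-v,t})=\sum_i(-1)^i\beta_i(-v)(t)\, . \]
Likewise, $\chi(M)=\sum_i(-1)^i\beta_i(M)$, and the sum runs only over $i\le n$ since these are subsets of $\bb R^n$. Substituting both expansions into the integrand of definition \ref{DefSupport} gives
\[ \ECT(M)(-v,t)-\chi(M)\bb 1_{[0,+\infty)}(t)=\sum_{i=0}^n(-1)^i\left(\beta_i(-v)(t)-\beta_i(M)\bb 1_{[0,+\infty)}(t)\right). \]

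The one point that needs care is exchanging the finite sum with the integral over $\bb R$. This requires each summand to be integrable on its own, not merely their alternating sum. Choose $R$ so that $M$ lies in the ball of radius $R$ about the origin. For $t<-R$ the sublevel set is empty, so $\beta_i(-v)(t)=0$, and the subtracted indicator also vanishes there. For $t\ge R$ the sublevel set is all of $M$, so $\beta_i(-v)(t)=\beta_i(M)$, which cancels the indicator term. Each summand is therefore supported in $[-R,R]$. On that interval it is bounded, because a tame filtration has only finitely many critical values, so the Betti curves are step functions with finitely many jumps. Each summand is thus integrable, the sum and integral commute, and integrating term by term yields $h_M(v)=\sum_i(-1)^i\beta_ih_M(v)$.

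For the convex case, every nonempty sublevel set of a convex $K$ is convex, hence contractible, and $K$ itself is contractible. So for $i\ge1$ we have $\beta_i(-v)(t)=0$ and $\beta_i(K)=0$, which makes every higher Betti support identically zero. The zeroth Betti curve equals $\ECT(K)(-v,t)$ and $\beta_0(K)=1=\chi(K)$, so $\beta_0h_K=h_K$. By lemma \ref{SuppMatchesConvex}, this is the classical support function.
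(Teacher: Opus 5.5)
Your proof is correct and takes essentially the same approach as the paper: expand $\chi$ of each sublevel set and of $M$ as alternating sums of Betti numbers, integrate term by term, and use that sublevel sets of a convex set are empty or convex, hence contractible. Your additional check that each Betti summand is separately integrable (it is supported in $[-R,R]$ and bounded because of tameness) is left implicit in the paper, and you handle it correctly.
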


\begin{proof}
	The equality follows from $\chi(M_{v,t})=\sum_i (-1)^i \beta_i(v)(t)$ and $\chi(M)=\sum_i (-1)^i \beta_i(M)$. Any sublevel set of a convex set is convex, and so, only has nontrivial homology in dimension zero. Thus, $\beta_ih_M$ is identically zero for $i>0$, proving the assertion about convex sets.
\end{proof}

\begin{definition}
	Let $M$ be a compact, tame shape. For each dimension $i$, we define its $i$-th \textbf{Betti-Steiner point} $\beta_is(M)$ as the origin if $\beta_i(M)=0$, and as
	\[\beta_is(M):=\frac{1}{\beta_i(M)\mu(\bb B^n)} \int_{\bb S^{n-1}} v \cdot \beta_ih_M(v) dv\]
	otherwise. Equivalently, it can be defined as
	\[ \beta_is(M):= \arg\min_{a \in \bb R^n}\int_{\bb S^{n-1}} \left(\int_{\bb R} (\beta_i(-v)(t) -\beta_i(M)\bb 1_{[-\langle v,a \rangle, +\infty)}(t)) dt \right)^2 dv. \]
\end{definition}

As it was the case for the Euler characteristic, a shape without an $i$-dimensional Betti number does not have a Betti-Steiner point, but it does have an \textbf{unweighted Betti-Steiner point}
\[ \beta_i(M)\beta_is(M):=\frac{1}{\mu(\bb B^n)} \int_{\bb S^{n-1}} v \cdot \beta_ih_M(v) dv\, .\]
As for the Euler characteristic, whenever we write a Betti-Steiner point accompanied by its corresponding Betti number, we will mean this unweighted version, even if said Betti number is zero.
\begin{prop}
	Given a compact, tame shape $M$, if $\beta_i(M) \neq 0$, the $i$-th Betti-Steiner point of $M$ is covariant under euclidean isometries:
	\[ \beta_is(T(M))=T(\beta_is(M)) \]
	for all euclidean isometries $T$.
\end{prop}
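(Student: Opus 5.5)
The argument mirrors the proof of proposition \ref{SteinerIsometry}, with Betti curves in place of Euler curves. Write $T=L+a$. The key point is that $T$ restricts to a homeomorphism of sublevel sets. The proof of proposition \ref{isoformula} shows $T(M)_{v,t}=T(M_{L^{-1}(v),\,t-\la a,v\ra})$, and homeomorphic sets have equal Betti numbers. Hence the $i$-th Betti curve of $T(M)$ in direction $v$ is the $i$-th Betti curve of $M$ in direction $L^{-1}(v)$, shifted: $\beta_i^{T(M)}(v)(t)=\beta_i^{M}(L^{-1}(v))(t-\la a,v\ra)$. Likewise $\beta_i(T(M))=\beta_i(M)$.

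Next I would establish the Betti analogue of proposition \ref{IsoSupport}:
\[ \beta_ih_{T(M)}(x)=\beta_ih_M(L^{-1}(x))+\beta_i(M)\la x,a\ra . \]
The computation is the same as there. Substitute the shifted curve into the definition of the Betti support. Change variables $\tilde t=t+\la x,a\ra$, which turns $\bb 1_{[0,+\infty)}$ into $\bb 1_{[\la x,a\ra,+\infty)}$. Then rewrite this indicator as $\bb 1_{[0,+\infty)}$ plus the compactly supported correction $f$ used in that proof, whose integral is $-\la x,a\ra$. The factor $\chi(M)$ becomes $\beta_i(M)$ throughout, and the sign bookkeeping is identical.

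Finally, plug this into the definition of $\beta_is(T(M))$, dividing by $\beta_i(T(M))=\beta_i(M)\neq 0$. This splits the integral into two pieces:
\begin{itemize}
\item $\int_{\bb S^{n-1}} v\,\beta_ih_M(L^{-1}(v))\,dv$. Change variables $\tilde v=L^{-1}(v)$ and pull $L$ out by linearity; this gives $\beta_i(M)\mu(\bb B^n)L(\beta_is(M))$.
\item $\beta_i(M)\int_{\bb S^{n-1}} v\la v,a\ra\,dv=\beta_i(M)\mu(\bb B^n)a$, by the standard computation already used in proposition \ref{SteinerIsometry}.
\end{itemize}
Dividing yields $L(\beta_is(M))+a=T(\beta_is(M))$.

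There is no real obstacle here. The only point needing care is the first step: the Betti support is not additive, so we cannot reuse the ECT results directly. Instead we must check that Betti curves transform under isometries exactly as Euler curves do, and this holds because the set identity in the proof of proposition \ref{isoformula} is purely set-theoretic.
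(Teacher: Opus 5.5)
Your proposal is correct and follows the paper's proof. The paper simply states that the proof of proposition \ref{SteinerIsometry} carries over verbatim, with Betti supports in place of supports and $\beta_i(M)$ in place of $\chi(M)$. You spell out exactly that, including a step the paper leaves implicit: Betti curves transform under $T$ the same way Euler curves do, because $T$ maps each sublevel set $M_{L^{-1}(v),\,t-\la a,v\ra}$ homeomorphically onto $T(M)_{v,t}$.
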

\begin{proof}
	The proof of \ref{SteinerIsometry} holds verbatim, replacing supports by Betti supports and the Euler characteristic by the $i$-th Betti number.
\end{proof}

These points provide another way to think about the unusual behavior of the usual Steiner point: it is a sort of summary of all Betti-Steiner points:

\begin{coro}
	\label{StPointCombBetti}
	For any compact, tame shape, $M$:
	\[ \chi(M)s(M)=\sum_{i} (-1)^i \beta_i(M) \beta_is(M) \]
\end{coro}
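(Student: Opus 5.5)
The statement follows by integrating the identity of corollary \ref{SuppCombBetti} against $v$ over the sphere, so the plan is to reduce everything to the unweighted definitions.

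By remark \ref{RemarkUnweighted}, whether or not $\chi(M)$ vanishes, the left-hand side denotes
\[ \chi(M)s(M)=\frac{1}{\mu(\bb B^n)}\int_{\bb S^{n-1}} v\cdot h_M(v)\,dv\, . \]
Likewise, each term $\beta_i(M)\beta_is(M)$ on the right-hand side denotes $\frac{1}{\mu(\bb B^n)}\int_{\bb S^{n-1}} v\cdot \beta_ih_M(v)\,dv$, by the convention fixed after the definition of Betti-Steiner points. This remains valid when $\beta_i(M)=0$. With these conventions there is no case split on whether $\chi(M)$ or some $\beta_i(M)$ is zero.

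Next, corollary \ref{SuppCombBetti} gives $h_M(v)=\sum_i(-1)^i\beta_ih_M(v)$ pointwise on $\bb S^{n-1}$. Multiplying by the vector $v$, integrating componentwise over the sphere and dividing by $\mu(\bb B^n)$ yields
\[ \chi(M)s(M)=\sum_i(-1)^i\frac{1}{\mu(\bb B^n)}\int_{\bb S^{n-1}} v\cdot\beta_ih_M(v)\,dv=\sum_i(-1)^i\beta_i(M)\beta_is(M)\, . \]
Pulling the sum out of the integral is justified by linearity of the integral.

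The only point requiring care is that the sum over $i$ is finite and that each integral exists. Finiteness holds because a compact tame set in $\bb R^n$ has homology only in degrees $0,\dots,n$, and its sublevel sets likewise. Integrability needs each $\beta_ih_M$ to be integrable, hence finite, on the sphere. This holds because, for $M$ contained in a ball of radius $R$ about the origin, the integrand $\beta_i(-v)(t)-\beta_i(M)\bb 1_{[0,\infty)}(t)$ vanishes for $|t|>R$. It is also bounded, since tameness gives uniformly bounded Betti numbers of sublevel sets (finitely many homeomorphism types in a definable family). So $|\beta_ih_M|$ is bounded on $\bb S^{n-1}$. Measurability in $v$ I would take for granted as the paper does for $h_M$, since $\beta_ih_M$ is a definable-type construction. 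With these remarks the interchange is immediate; I expect no real obstacle beyond this bookkeeping.
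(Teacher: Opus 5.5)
Your proof is correct and follows the paper's argument: the paper calls the corollary immediate from the unweighted definitions and corollary \ref{SuppCombBetti}, which is the integration of $h_M=\sum_i(-1)^i\beta_ih_M$ against $v$ over the sphere that you carry out. Your extra bookkeeping (finitely many nonzero degrees, and $\beta_ih_M$ bounded because the integrand vanishes for $|t|>R$ and definable families have uniformly bounded Betti numbers) soundly justifies the interchange of sum and integral, which the paper leaves implicit.
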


\begin{proof}
	Immediate from the definitions and corollary \ref{SuppCombBetti}.
\end{proof}

Going back to the lollipop, example \ref{Lollipop}, we can now compute that the first Betti-Steiner point, (only sensible to one-dimensional homological features, the two ``holes'' of the lollipop), is located at the center of the circumference, and thus, by the previous corollary, we can infer that its zeroth Betti-Steiner point is at half the length of $B$ to the right of the center of $A$. Both points now lie within the shape: these points may be better centrality measures than the usual Steiner point. As such, we propose the following definitions of Betti-Steiner supports and distances.

\begin{definition}
	Let $M$ be a compact, tame shape. Its $i$-th \textbf{Betti-Steiner} support is defined as the function
	\[ \beta_iS_M(v) := \beta_ih_M(v)-\la v, \beta_i(M)\beta_is(M) \ra\, . \]
	
	When $\beta_i(M) \neq 0$, it can equivalently defined as the Betti support of $M-\beta_is(M)$, or as
	\[ \beta_iS_M(v):= \int_{\bb R} (\beta_i(-v)(t)-\beta_i(M)\bb 1_{[-\langle v,\beta_is(M) \rangle, +\infty)}(t)) dt\, .\]
	
	The $i$-th \textbf{Betti-Steiner support norm} and \textbf{Betti-Steiner support distance}, as well as \textbf{Betti-Steiner barcodes} and \textbf{Betti-Steiner bottleneck distances} are defined analogously to definitions \ref{DefNormSupp}, \ref{DefDistSupp} and \ref{DefBarcodes} by replacing $S_M$ with $\beta_iS_M$.
\end{definition}

\begin{prop}
	\label{SumBettiSteiner}
	Let $M$ be a compact, tame shape. Then:
	\[ \sum_i (-1)^i \beta_iS_M(v) = S_M(v).\]
\end{prop}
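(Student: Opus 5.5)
The plan is to expand both sides using the definitions and then invoke the two alternating-sum identities already established, namely corollary \ref{SuppCombBetti} and corollary \ref{StPointCombBetti}. By definition of the $i$-th Betti-Steiner support, in the form valid for every shape including those with $\beta_i(M)=0$,
\[ \beta_iS_M(v)=\beta_ih_M(v)-\la v,\beta_i(M)\beta_is(M)\ra\, , \]
where $\beta_i(M)\beta_is(M)$ denotes the unweighted Betti-Steiner point. Take the alternating sum over $i$. This is a finite sum, since a compact tame subset of $\bb R^n$ has homology only in degrees $0,\dots,n$.

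Linearity of the inner product in its second argument then gives
\[ \sum_i(-1)^i\beta_iS_M(v)=\sum_i(-1)^i\beta_ih_M(v)-\Big\la v,\sum_i(-1)^i\beta_i(M)\beta_is(M)\Big\ra\, . \]
By corollary \ref{SuppCombBetti} the first sum equals $h_M(v)$. By corollary \ref{StPointCombBetti} the second sum equals $\chi(M)s(M)$, understood as the unweighted Steiner point of remark \ref{RemarkUnweighted} if $\chi(M)=0$. The right-hand side is therefore $h_M(v)-\la v,\chi(M)s(M)\ra$, which is exactly the definition of $S_M(v)$.

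The only point needing care is bookkeeping in the degenerate cases. We must use the unweighted conventions consistently, both when $\beta_i(M)=0$ and when $\chi(M)=0$, so that no "origin" convention introduces a spurious term. Using the first, convention-free form of each definition, rather than the integral form with the shifted indicator, avoids this. Otherwise the argument is purely formal.
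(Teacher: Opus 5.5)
Your proof is correct and follows essentially the same argument as the paper. The paper also expands $S_M(v)=h_M(v)-\la v,\chi(M)s(M)\ra$, applies corollaries \ref{SuppCombBetti} and \ref{StPointCombBetti}, and collects terms using linearity of the inner product. Your remarks on finiteness of the sum and on the unweighted conventions are harmless additions.
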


\begin{proof}
	Recall that $S_M(v)=h_M(v)-\la v, \chi(M)s(M) \ra$. Using the results of corollaries \ref{SuppCombBetti} and \ref{StPointCombBetti} and collecting terms, this in turn equals $\sum_i  (-1)^i(\beta_ih_M -\la v, \beta_i(M)\beta_is(M)\ra) = \sum_i (-1)^i \beta_iS_M(v)$.
\end{proof}

Similar definitions can be made in the exponential light (see definition \ref{DefExpSupp}), with the added benefit that a positive function parameter is no longer needed inside the integral to ensure the integrand is positive, but with the caveat that the integral may now be zero if $\beta_i(M) = 0$, causing the exponential support to potentially evaluate as $-\infty$. All the isometry invariance properties of Steiner supports and barcodes extend to the Betti-Steiner case. We mention below the Betti-Steiner analog of the robustness under point cloud noise proven in theorem \ref{ThmPointCloud}.

\begin{theorem}[Betti support robustness under point cloud noise]
	Let $M$ be a compact, tame shape, and let $P$ be a point cloud disjoint from $M$. Then, for all $i \geq 0$:
	\[\beta_iS_{M \sqcup P} = \beta_iS_M\, .\]
\end{theorem}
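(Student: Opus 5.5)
The approach is to show that, although Betti supports are not additive in general, they \emph{are} additive over disjoint unions of compact sets, and that a point cloud contributes nothing in positive degrees and exactly a zero Steiner support in degree $0$.

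First, for every $v \in \bb S^{n-1}$ and $t \in \bb R$,
\[(M \sqcup P)_{v,t} = M_{v,t} \sqcup P_{v,t}.\]
The two pieces are disjoint compact sets, hence at positive distance from each other, so each is open and closed in the union. Therefore
\[H_i\big((M\sqcup P)_{v,t}\big) \cong H_i(M_{v,t}) \oplus H_i(P_{v,t}).\]
This gives an equality of Betti curves, $\beta_i^{M\sqcup P}(v)(t) = \beta_i^M(v)(t) + \beta_i^P(v)(t)$, and likewise $\beta_i(M \sqcup P) = \beta_i(M) + \beta_i(P)$. By linearity of the integral in the definition of the Betti support, $\beta_ih_{M\sqcup P} = \beta_ih_M + \beta_ih_P$. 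Integrating against $v$ then gives additivity of the unweighted Betti-Steiner points,
\[\beta_i(M\sqcup P)\,\beta_is(M\sqcup P) = \beta_i(M)\beta_is(M) + \beta_i(P)\beta_is(P).\]
Hence
\[\beta_iS_{M\sqcup P} = \beta_iS_M + \beta_iS_P,\]
where $\beta_iS$ is the unweighted form $\beta_ih - \la v, \beta_i\beta_is\ra$, which is valid even when a Betti number vanishes.

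It then remains to show $\beta_iS_P \equiv 0$ for all $i$.
\begin{itemize}
\item For $i>0$: every sublevel set $P_{v,t}$ is a finite set of points, so $\beta_i^P(v)(t)=0$ and $\beta_i(P)=0$. Thus $\beta_ih_P=0$, its unweighted Betti-Steiner point is $0$, and $\beta_iS_P=0$.
\item For $i=0$: $\beta_0^P(v)(t) = \chi(P_{v,t}) = \ECT(P)(v,t)$ and $\beta_0(P)=\chi(P)$. So $\beta_0h_P = h_P$, and by definition $\beta_0S_P = S_P$. This vanishes identically by the discussion preceding Theorem~\ref{ThmPointCloud}: a single point has zero Steiner support, and then additivity (Proposition~\ref{SteinerSuppAdd}) applies over the disjoint points. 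Alternatively, one can apply Corollary~\ref{SuppCombBetti} directly.
\end{itemize}

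The only genuinely delicate step is the first: justifying the homology splitting of the sublevel sets, and hence additivity of Betti curves. This is where disjointness of $M$ and $P$ is essential, since the Betti numbers fail the Mayer–Vietoris-type additivity that the Euler characteristic enjoys. Once that is settled, everything else follows by linearity of the integrals defining $\beta_ih$ and $\beta_i\beta_is$.
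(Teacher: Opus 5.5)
Your proof is correct. For $i>0$ it is the paper's argument: the point cloud adds no positive-dimensional homology to any sublevel set. You also make explicit the clopen splitting of homology that the paper uses tacitly.

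The routes differ in degree $0$. The paper does not split $H_0$ at all. It writes $\beta_0S$ as $S$ minus the alternating sum of the higher Betti-Steiner supports (Proposition \ref{SumBettiSteiner}), and observes that both pieces are unchanged when $P$ is added: $S$ by Theorem \ref{ThmPointCloud}, and the higher supports by the case $i>0$. You instead prove a general lemma: Betti supports, unweighted Betti-Steiner points and Betti-Steiner supports are additive over disjoint unions of compact tame sets, in every degree. This reduces the theorem to showing $\beta_iS_P\equiv 0$. In degree $0$ you then need only that the $\beta_0$-curves of a finite set are its Euler curves, so $\beta_0S_P=S_P=0$.

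The paper's version is shorter because it recycles what was already proved at the Euler-characteristic level. Yours avoids Proposition \ref{SumBettiSteiner}, and uses the Euler-level fact only for $P$ itself rather than for $M\sqcup P$. It also yields a more general, reusable statement: $\beta_iS_{M\sqcup N}=\beta_iS_M+\beta_iS_N$ for any disjoint compact tame $M,N$. Hence any disjoint ``noise'' $Q$ with $\beta_iS_Q\equiv 0$, not only a point cloud, leaves $\beta_iS_M$ unchanged.
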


\begin{proof}
	For $i>0$, the statement is trivial since as a point cloud has no positive-dimensional homological features, so all sublevel sets of $M$ and $M \sqcup P$ have the same $i$-th dimensional homology. The case $i=0$ then follows from proposition \ref{SumBettiSteiner}, allowing us to express $\beta_0(M)$ in terms of higher dimensional Betti-Steiner supports and the Steiner support of $M$, and theorem \ref{ThmPointCloud}.
\end{proof}

To end our theoretic discussion, we should point out that as Betti numbers are not additive, neither are Betti-Steiner points and supports. However, thanks to the Mayer-Vietoris exact sequence, there are some relationships to be found. As an example, here is a case where additivity can be assured:

\begin{prop}
	Let $M$ and $N$ be compact and tame. Assume that, for fixed $v \in \bb S^{n-1}$ and $i \in \bb N \cup \{0\}$, the following hold:
	\begin{enumerate}
		\item $\beta_{i+1}((M\cup N)_{-v,t})= 0$ for all $t \in \bb R$.
		\item $\beta_{i-1}((M \cap N)_{-v,t})= 0$ for all $t \in \bb R$.
	\end{enumerate}
	Then,
	\[ \beta_ih_M(v)+\beta_ih_N(v) = \beta_ih_{M \cup N}(v) +\beta_ih_{M \cap N}(v)\, . \]
	Moreover, if the above conditions hold for any $v \in \bb S^{n-1}$, then
	\[ \beta_i(M)\beta_is(M)+\beta_i(N)\beta_is(N)=\beta_i(M \cup N)\beta_is(M \cup N)+\beta_i(M \cap N)\beta_is(M \cap N)\, , \]
	and
	\[ \beta_iS_M+\beta_iS_N= \beta_iS_{M \cup N} +\beta_iS_{M \cap N}\, . \]
\end{prop}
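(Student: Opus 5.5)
The plan is to reduce the statement to a pointwise (in $t$) additivity of Betti curves, obtained from the Mayer--Vietoris sequence for sublevel sets. Fix $v$ and $t$ and write $A=M_{-v,t}$, $B=N_{-v,t}$. As in the proof of proposition \ref{add}, we have $A\cup B=(M\cup N)_{-v,t}$ and $A\cap B=(M\cap N)_{-v,t}$. These are compact tame sets, so Mayer--Vietoris holds for them; for compact definable sets one may triangulate $A\cup B$ compatibly with $A$ and $B$ and use simplicial Mayer--Vietoris. The relevant portion of the sequence is
\[ H_{i+1}(A\cup B)\to H_i(A\cap B)\to H_i(A)\oplus H_i(B)\to H_i(A\cup B)\to H_{i-1}(A\cap B)\, . \]
Hypothesis 1 kills the leftmost group and hypothesis 2 kills the rightmost one (for $i=0$, read $H_{-1}=0$, so hypothesis 2 is vacuous). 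This leaves a short exact sequence of finite-dimensional real vector spaces, and hence
\[ \beta_i(M_{-v,t})+\beta_i(N_{-v,t})=\beta_i((M\cup N)_{-v,t})+\beta_i((M\cap N)_{-v,t}) \]
for all $t$.

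Next I need the same identity for the total Betti numbers that appear in the subtracted step term $\beta_i(\cdot)\bb 1_{[0,+\infty)}$. Since all shapes are compact, there is a $t$ large enough that every sublevel set equals the whole shape, so the pointwise identity at such a $t$ gives $\beta_i(M)+\beta_i(N)=\beta_i(M\cup N)+\beta_i(M\cap N)$. I then subtract the two identities. The resulting integrands are compactly supported step functions, so the integrals are finite and linearity of the integral yields the first formula.

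For the second claim, assume the hypotheses hold for every $v$. Then the first formula holds for all $v\in\bb S^{n-1}$. Multiplying by $v$, integrating over the sphere, and dividing by $\mu(\bb B^n)$ gives the identity for unweighted Betti--Steiner points, by their definition; this is valid even when some Betti number is zero, by the convention stated before the proposition. Finally, $\beta_iS_M(v)=\beta_ih_M(v)-\la v,\beta_i(M)\beta_is(M)\ra$ is a difference of two additive quantities, so it is additive as well. This mirrors the proof of proposition \ref{SteinerSuppAdd}.

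I expect the main obstacle to be justifying Mayer--Vietoris for closed tame subsets, which need not be excisive in general. I would handle it via a simultaneous definable triangulation, or by noting that compact definable sets are neighbourhood deformation retracts. The remaining steps are routine.
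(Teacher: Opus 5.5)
Your proposal is correct and follows the paper's own argument: the two hypotheses kill the end terms of the Mayer--Vietoris sequence for the sublevel sets in direction $-v$. This makes the Betti curves additive pointwise in $t$; integrating gives the first formula, and integrating against $v$ over the sphere gives the unweighted Betti--Steiner points and supports. You also spell out two points the paper leaves implicit: the additivity of the total Betti numbers in the subtracted step term, which you get by taking $t$ large, and the validity of Mayer--Vietoris for compact tame sets via a compatible triangulation.
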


\begin{proof}
	Let $t \in \bb R$. The Mayer-Vietoris exact sequence for the pair $M_{-v,t}$, $N_{-v,t}$ at stage $i$ reads:
	\[ 0 \to H_i((M \cap N)_{-v,t}) \to H_i(M_{-v,t}) \oplus H_i(N_{-v,t}) \to H_i((M \cup N)_{-v,t}) \to 0\, ,\]
	where the zero on the left is $H_{i+1}((M\cup N)_{-v,t})$ and the zero on the right is $H_{i-1}((M \cap N)_{-v,t})$. This short exact sequence guarantees that $\beta_i(M_{-v,t})+\beta_i(N_{-v,t})=\beta_i((M \cup N)_{-v,t})+\beta_i((M \cap N)_{-v,t})$, that is, Betti curves on direction $-v$ are additive. This immediately implies the first statement.
	
	If this holds for every $v \in \bb S^{n-1}$, then the unweighted Betti-Steiner points will obviously be additive, and so will the Betti-Steiner supports.
\end{proof}

\paragraph{Experimental results for Betti-Steiner supports}

We believe that Betti-Steiner supports and barcodes can provide more discriminative features, albeit at a higher computational cost. To assess this, we repeat the MPEG-7 dataset experiment of section \ref{SectExperiment} with Betti-Steiner supports and barcodes (of dimensions 0 and 1, as these are the only non-trivial homological features in the plane). Instead of implementing a more general persistent homology-based pipeline, we have opted for a quicker method that is specific to our contour dataset. To justify its correctness, we briefly mention this lemma:

\begin{lemma}
	\label{LemmaSubOfCircle}
	Any proper sublevel set $N$ of a shape $M$ homeomorphic to $\bb S^1$ verifies $H_1(N) = 0$.
\end{lemma}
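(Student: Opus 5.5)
The plan is to reuse the Mayer--Vietoris argument from the proof of lemma \ref{SimpCompPositive}, adapted to a shape $M$ homeomorphic to $\bb S^1$. By a proper sublevel set I understand $N=M_{v,t}$ with $N\neq M$, so that some point of $M$ satisfies $\langle x,v\rangle>t$. If $N=\emptyset$ there is nothing to prove, so assume $N$ is nonempty.

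The key observation is purely topological: $N$ is a compact, proper subset of a space homeomorphic to $\bb S^1$. Choose a point $p\in M\setminus N$. Then $N\subseteq M\setminus\{p\}$, and $M\setminus\{p\}$ is homeomorphic to an open interval. Hence $N$ is homeomorphic to a compact subset of $\bb R$.

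Tameness now does the rest. $N$ is compact and tame, so it has finitely many connected components. Each component is a compact connected subset of the interval, hence a closed interval or a point. Therefore every component is contractible, and $H_1(N)=0$.

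If one prefers to avoid the interval picture, there is an alternative route mimicking lemma \ref{SimpCompPositive}. Write $M=M_{v,t}\cup M_{-v,-t}$ and take the Mayer--Vietoris sequence
\[ H_1(M_{v,t}\cap M_{-v,-t})\to H_1(M_{v,t})\oplus H_1(M_{-v,-t})\to H_1(M)\to H_0(M_{v,t}\cap M_{-v,-t})\, . \]
Since $H_1(M)\cong\bb R$ here, this route needs more care than in the simply connected case. One would have to argue that the generator of $H_1(M)$ maps nontrivially to the intersection term. That is why the direct argument above, via removing a point, is preferable.

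The main obstacle is to justify carefully that a nonempty proper sublevel set really misses a point of $M$. This is immediate from properness. The finiteness of components, coming from tameness, also has to be checked. Both steps are short, so I would present the point-removal argument as the proof.
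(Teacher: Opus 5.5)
Your point-removal argument is correct and is the paper's proof: choose $p\in M\setminus N$, observe $N\subseteq M\setminus\{p\}\cong\mathbb{R}$, and conclude $H_1(N)=0$. Your appeal to tameness (finitely many components, each a point or a closed interval) makes explicit a step the paper leaves implicit, though it is not strictly needed, since every path component of a subset of $\mathbb{R}$ is an interval and hence contractible.
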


\begin{proof}
	Pick $p\in M \setminus N$, so that $N \subseteq M \setminus \{p\} \cong \bb R$. Thus, $N$ is homeomorphic to a subset of $\bb R$ and so has trivial one-dimensional homology.
\end{proof}

As a consequence, if $M$ is a disjoint union of shapes $\{M^i\}_{i =1}^k$ with each homeomorphic to a circle, $\beta_1(M_{v,t})= \sum_{i=1}^k\beta_1(M^i_{v,t})$ is the number of $M^i$ which are completely contained in the halfspace with outer normal vector $v$ at height $t$. Thus, the only places where the Betti curve in direction $v$ will change are at the maximum heights of each component $M^i$ when viewed from direction $v$.

The procedure we follow to compute $\beta_0S_M(v)$ and $\beta_1S_M(v)$ for a collection of closed polygonal curves $M=\sqcup_i M^i$ is as follows:
\begin{itemize}
	\item We compute $\ECT(M)$ and $h_M$ (via the \texttt{ect} package and integration, as in section \ref{SectExperiment}).
	\item By lemma \ref{LemmaSubOfCircle} and the discussion thereafter, we can obtain the changes in the one-dimensional Betti curve by computing the last point hit in each curve, that is, the Betti curve in direction $v$ will increase by one at points $\max_{x \in M^i} \la x,v \ra$ for each $i=1,...,k$. We can then compute the first Betti support of $M$, $\beta_1h_M$.
	\item By corollary \ref{SuppCombBetti}, we have $\beta_0h_M = h_M +\beta_1h_M$, allowing us to compute $\beta_0h_M$.
	\item Both Betti-Steiner points are computed from this, which in turn allow us to compute Betti-Steiner supports $\beta_0S_M$ and $\beta_1S_M$.
\end{itemize}

 Vectorization of supports and barcodes, as well as training conditions, are done in the same way as in section \ref{SectExperiment}. The only difference is the function parameter $f$ chosen for the computation of exponential Betti-Steiner supports, for which we chose the identity since Betti curves are already everywhere non negative. Feature dimensions are doubled since we are now computing two values for each direction. We report average accuracies in table \ref{AccBetti}. As expected, there is an improvement over our results with Steiner supports, which is particularly noticeable in the 70 classes test, where by employing both the usual and exponential Betti-Steiner supports we achieve similar results to training over raw ECT data. Exponential Betti-Steiner supports seem to be less accurate than usual Betti-Steiner supports in the 70 class test.

\begin{table}[]
	\centering
	\begin{tabular}{l|l|l|l|l|l|}
		\cline{2-6}
		& \begin{tabular}[c]{@{}l@{}}Feature\\ dimension\end{tabular} & 10 classes                 & 70 classes        & \begin{tabular}[c]{@{}l@{}}10 classes\\ (misaligned)\end{tabular} & \begin{tabular}[c]{@{}l@{}}70 classes\\ (misaligned)\end{tabular} \\ \hline
		\multicolumn{1}{|l|}{B-S Supports}                   & 200                                                         & 98.24 \%                   & 79.86 \%          & 76.93 \%                                                          & 32.79 \%                                                          \\ \hline
		\multicolumn{1}{|l|}{B-S Landscapes}                 & 600                                                         & 94.18 \%                   & 54.71 \%          & 94.50 \%                                                          & 54.00 \%                                                          \\ \hline
		\multicolumn{1}{|l|}{B-S Supports + Landscapes}      & 800                                                         & 98.39 \%                   & 81.71 \%          & 90.10 \%                                                          & 56.79 \%                                                          \\ \hhline{|=|=|=|=|=|=|}
		\multicolumn{1}{|l|}{Exp. B-S Supports}              & 200                                                         & 98.03 \%                   & 70.21 \%          & 74.23 \%                                                          & 29.21 \%                                                          \\ \hline
		\multicolumn{1}{|l|}{Exp. B-S Landscapes}            & 600                                                         & 95.83 \%                   & 49.79  \%         & 95.54 \%                                                          & 49.14 \%                                                          \\ \hline
		\multicolumn{1}{|l|}{Exp. B-S Supp. + Landscapes}    & 800                                                         & 98.59 \%                   & 76.50 \%          & 94.24 \%                                                          & 56.43 \%                                                          \\ \hhline{|=|=|=|=|=|=|}
		\multicolumn{1}{|l|}{Both B-S Supports}              & 400                                                         & \textit{\textbf{98.78 \%}} & 83.57 \%          & 80.36 \%                                                          & 41.14 \%                                                          \\ \hline
		\multicolumn{1}{|l|}{Both B-S Landscapes}            & 1,200                                                       & 96.34 \%                   & 66.57  \%         & \textit{\textbf{96.31 \%}}                                        & 65.43 \%                                                          \\ \hline
		\multicolumn{1}{|l|}{Both B-S Supports + Landscapes} & 1,600                                                       & 98.76 \%                   & \textit{85.14 \%} & 93.54 \%                                                          & \textit{\textbf{66.57 \%}}                                        \\ \hhline{|=|=|=|=|=|=|}
		\multicolumn{1}{|l|}{Raw ECT data}                   & 10,000                                                      & 98.69 \%                   & \textbf{85.86 \%} & 79.06 \%                                                          & 45.86 \%                                                          \\ \hline
		\multicolumn{1}{|l|}{Raw ECT data (reduced)}         & 800                                                         & 98.50 \%                   & 84.79 \%          & 77.58 \%                                                          & 43.36 \%                                                          \\ \hline
	\end{tabular}
	\caption{Average accuracies of SVMs trained with Betti-Steiner supports and landscapes data. ECT accuracies are repeated from table \ref{AccTable}.}
	\label{AccBetti}
\end{table}

	\section{Conclusions and further directions of study}
\label{SectConclusion}
The simple connection between the ECT and the support function given in definition \ref{DefSupport} has proven to be a source of inspiration for defining new pseudodistances, a powerful tool to relate the ECT with geometric quantities, and provides isometry invariant features in the form of Steiner barcodes. Our early experiments indicate that Steiner supports can perform adequately in classification tasks with small, aligned datasets, and are unaffected by point cloud noise. Steiner barcodes, although less informative, are applicable in misaligned datasets and may be of use in combination with other better performing methods. Lastly, viewing the support function as a summary of homological features, Betti supports, has provided more discriminative features. Several questions remain open: as the goal of this article was to make the connection between TDA and convex geometry apparent and arrive at a workable pipeline, we have not fully explored all the concepts within. Here are some questions which we wish to look more into in the future:

\begin{itemize}
	\item How robust are the support and Steiner centered distances under perturbations or deletions?
	\item Can we find under which conditions the support / Steiner centered pseudodistance between two shapes is zero?
	\item There is a perfect match between the Hausdorff distance and the $L^\infty$ metric for convex sets (theorem \ref{HausdorffDist}). Can we exploit this to provide bounds relating the Hausdorff, support, and Steiner centered distances for general shapes?
	\item Can other geometric quantities be explicitly related to the ECT, either in the plane or in higher dimensions?
	\item Are there flat, non-convex, shapes for which a computation of their perimeter is not obvious, but have constant support width and as such can be inferred via the generalized Barbier's theorem (see theorem \ref{BarbierGen})?
	\item Can we produce more informative isometry invariant features than Steiner barcodes?
	\item Can we implement Betti-Steiner supports and barcodes in a computationally efficient way in more general settings?
	\item To what extent is the assignment $M \mapsto (\beta_ih_M)_{i \geq 0}$ non-injective?
	\item Much of the ECT theory has been extended to weighted simplicial complexes, and in particular for grayscale image analysis. Many of our constructions admit a trivial extension to this context, however, early experiments show that support functions are much less informative for pixelated images: this seems to be a clear artifact of the ``jaggedness'' of their edges. Can we extend these concepts for weighted, cubical complexes in a compelling way?
\end{itemize}

\section*{Acknowledgments and availability of code}
This work was supported by a grant issued by Spain's Ministerio de Ciencia, Innovación y Universidades (MICIU) [FPU24/03165]. The author thanks his PhD advisors Álvaro Lozano and Miguel Ángel Marco for their continued support, review and helpful comments, as well as David Alonso and Jorge Santiago Ibáñez for providing insight and resources regarding convex geometry. Finally, the author would like to thank his masters' thesis advisor, Antonio Félix Costa, for his encouragement to follow up on some early results obtained during the writing of said thesis.

Code employed for computations and SVM training can be found at the following \texttt{github} repository \cite{Gacias_Franco_ECT_Support}.

	\newpage
	\bibliography{biblioECTSupport}{}
	\bibliographystyle{unsrt}
	
	\newpage
	\appendix
	\section{Approximation of a flat tame set by simplicial complexes}

In this section, we prove the following:
\begin{theorem}
	\label{Approx}
	Let $M \subseteq \bb R^2$ be a compact and tame shape. There exists a sequence of simplicial complexes $\{S_n\}_{n \in \bb N}$ such that:
	
	\begin{enumerate}
		\item $S_n$ is homeomorphic to $M$ for all $n$.
		\item $\Peri(S_n) \overset{n \to \infty}{\longrightarrow} \Peri(M)$.
		\item $\int_{\bb S^{1}}h_{S_n}(v) dv \overset{n \to \infty}{\longrightarrow} \int_{\bb S^{1}} h_M(v) dv$.
	\end{enumerate}
\end{theorem}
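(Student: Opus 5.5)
Fix a $\mathcal C^1$ triangulation $f: S \to M$, as in the definition of the perimeter. For each $n$, subdivide $S$ (barycentrically, $k_n$ times) into $S^{(n)}$, chosen so that the image under $f$ of every simplex has diameter less than $1/n$. Define $S_n$ as the \emph{geometric realization with vertices moved}: the simplicial complex in $\bb R^2$ whose vertices are the points $f(w)$, $w$ a vertex of $S^{(n)}$, and whose simplices are the straight simplices spanned by the images of the vertices of each simplex of $S^{(n)}$. Denote by $g_n: S \to S_n$ the resulting piecewise linear map, affine on each simplex of $S^{(n)}$.

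\textbf{Homeomorphism.} I would show $g_n$ is an embedding for $n$ large. Since $f$ is a $\mathcal C^1$ homeomorphism, on each simplex of $S$ it is uniformly approximated to first order by its differential. A $\mathcal C^1$ triangulation in the sense of \cite{pawlucki_strict_2024} has injective differential on each open simplex, with bounds uniform near the faces (this is part of what ``strict'' triangulation buys). Hence, on fine simplices, $g_n$ is $\mathcal C^1$-close to $f$, so straight $2$-simplices keep their orientation and adjacent simplices do not overlap. Simplices far apart in $S$ have images far apart, and their straightened copies lie within $o(1/n)$ of them. This step is the main obstacle. If uniform nondegeneracy fails at the boundary of simplices, I would first modify the triangulation: tame $\mathcal C^1$ curves have finitely many inflections and tangent-direction changes, so one can refine at those points so that each edge image is a convex or straight arc with small turning angle. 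Then I would argue injectivity edge by edge, using that chords of small-turning arcs stay inside thin lens regions that are pairwise disjoint away from shared vertices.

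\textbf{Perimeter.} Under $g_n$, boundary and isolated edges of $S$ are subdivided, and they map to boundary and isolated edges of $S_n$, since $g_n$ is a homeomorphism compatible with the simplicial structures. So $\Peri(S_n)$ is the sum over these original edges $e$ of the inscribed polygonal lengths of the $\mathcal C^1$ curves $f(e)$, with isolated edges counted twice. Because the mesh tends to $0$, inscribed polygon lengths converge to the arc length $\ell(f(e))$ for rectifiable (in particular $\mathcal C^1$) curves, which gives item 2.

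\textbf{Support integral.} By Theorem \ref{PeriRecovery}, applied to the complexes, $\int_{\bb S^1} h_{S_n}=\Peri(S_n)$, so item 3 amounts to showing that $\int_{\bb S^1} h_M=\Peri(M)$, a circularity I must avoid. Instead I would prove convergence directly. For fixed $v$ outside a finite set, the Euler curves $\ECT(S_n)(-v,\cdot)$ converge to $\ECT(M)(-v,\cdot)$ in $L^1(\bb R)$. The sublevel sets $(S_n)_{-v,t}$ and $M_{-v,t}$ are homotopy equivalent except for $t$ within $O(1/n)$ of the finitely many critical heights of $M$ in direction $-v$; those critical heights come from $f$ of the vertices and the tangency points of the edge arcs. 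On the exceptional intervals, the Euler characteristics are bounded by the number of simplices in a fixed neighbourhood. The remaining difficulty is that the number of simplices of $S^{(n)}$ grows. To handle it, I would bound $|\chi|$ locally using that only boundedly many critical points of the height function occur near each critical height of $M$, again because each small arc has small turning. Everything is supported in a fixed ball, so dominated convergence in $t$ and then in $v$ gives item 3.
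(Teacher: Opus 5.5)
Your homeomorphism step fails, and the rest of the argument presupposes it. The premise that a (strict) $\mathcal C^1$ triangulation has differential injective uniformly up to the faces is false whenever $M$ has a zero-angle corner. Take $M=\{(x,y): 0\le x\le 1,\ x^{3/2}\le y\le 2x^{3/2}\}$. Its tangent cone at the origin is a ray, so for \emph{every} $\mathcal C^1$ triangulation $Df$ has rank at most one at the preimage of the cusp.

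The lens fallback does not rescue this. Both boundary arcs are convex with arbitrarily small turning near the origin. Yet the upper arc lies inside the lens of the lower arc's chord from $(0,0)$ to $(\epsilon,\epsilon^{3/2})$ for $0<x<\epsilon/4$.

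Here is a concrete failure. Set $X=a+b(a+b)$. The map $(a,b)\mapsto\left(X,\left(1+\frac{b}{a+b}\right)X^{3/2}\right)$, sent to $(0,0)$ at the origin, is a $\mathcal C^1$ homeomorphism of the standard triangle onto $M$. Its differential is even injective on the open simplex. On the two edges at the cusp it restricts to $s\mapsto(s,s^{3/2})$ and $s\mapsto(s^2,2s^3)$. After $k$ barycentric subdivisions your first vertices on these arcs are $(\epsilon,\epsilon^{3/2})$ and $(\epsilon^2,2\epsilon^3)$, with $\epsilon=2^{-k}$. So the polygon inscribed in the upper arc leaves the origin with slope $2\epsilon<\epsilon^{1/2}$, below the lower polygon, and ends above it at $x=1$. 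The two polygons therefore cross for every $k\ge 3$, and $g_n$ is never an embedding. Your vertex positions are dictated by the speed of $f$ along each edge, so refining at inflection points cannot fix this. You would have to choose boundary vertices yourself and stop moving interior vertices with $f$. Your item 2 argument is fine once an embedding is available.

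The support-integral step leaves the actual content unproved. You claim that $\chi((S_n)_{-v,t})=\chi(M_{-v,t})$ outside $O(1/n)$-neighbourhoods of the critical heights of $M$. This requires matching the critical points of $S_n$ with those of $M$, together with their contributions to the jumps of the Euler curve. That matching is needed near every vertex of $f(S)$, where several possibly mutually tangent arcs and sectors meet, and near every tangency point. Combined with the uniform bound you need for dominated convergence, this is the perimeter formula for $M$ itself, and ``homotopy equivalent'' does not establish it.

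The paper avoids any convergence of Euler curves for $M$. It places the vertices directly on $\partial M$, fine enough that each segment cuts off an arc that is convex, concave or straight. It then triangulates the enclosed polygonal region by straight triangles and uses additivity of $h$. Faces without a boundary edge, internal edges and vertices cancel. Each remaining face differs from its curved image by a convex bump or dent, and each isolated edge differs from its image by a convex curve. For these pieces the formula is already available: equation (\ref{PeriSupport}), and Lemma \ref{ConvexCurve}, where dominated convergence is trivial because a line meets a convex curve at most twice. This gives the exact identity $\int_{\bb S^1}h_{S_n}-\int_{\bb S^1}h_M=\Peri(S_n)-\Peri(M)$ for every $n$, so item 3 follows from item 2. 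Your construction is not suited to this trick, since every $f(\sigma)$, interior simplices included, is a curved triangle for which the formula is not yet known.
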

In combination with the perimeter recovery formula for simplicial complexes --stating that the sequences of points two and three above are in fact equal--, this allows us to finish the proof of theorem \ref{PeriRecovery}. Before constructing $\{S_n\}_{n \in \bb N}$ in general, we do so for convex curves (recall that these are curves such that the line joining any two of its points lies \emph{above} it):
\begin{lemma}
	\label{ConvexCurve}
	Let $C \subseteq \bb R^2$ be a convex, compact, tame, and $\mathcal C^1$ curve joining two horizontally aligned points. Then, a sequence $\{S_n\}_{n \in \bb N}$ as in theorem \ref{Approx} exists. In particular, the perimeter recovery formula holds for $C$.
\end{lemma}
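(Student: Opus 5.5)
We will take the most naive approximation, \emph{inscribed polygonal arcs}. Parametrize $C$ by a $\mathcal C^1$ map $\gamma:[0,1]\to\bb R^2$. Let $S_n$ be the simplicial complex whose vertices are $\gamma(k/2^n)$, $k=0,\dots,2^n$, and whose edges join consecutive vertices. Each $S_n$ is an embedded arc, hence homeomorphic to $C$; this gives item 1 of theorem \ref{Approx}. Since every edge of $S_n$ is isolated, $\Peri(S_n)$ is twice the length of the inscribed polygon. Likewise $\Peri(C)=2\ell(C)$. The standard fact that inscribed polygonal lengths of a rectifiable (here $\mathcal C^1$) curve converge to its length under mesh refinement gives item 2. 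Convexity is preserved: each $S_n$ is again a convex arc lying below its chords.

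For item 3 we will not pass to the limit in the ECT directly. Instead we compute $h$ of an arc from its critical values. For a one-dimensional arc $A$ and a generic direction $-v$, the Euler curve $t\mapsto\chi(A_{-v,t})$ counts connected components, since every subset of an arc is a union of intervals. It therefore jumps by $+1$ at each local minimum of the height $x\mapsto\langle x,-v\rangle$ on $A$ (endpoints included) and by $-1$ at each interior local maximum. Plugging this step function into definition \ref{DefSupport} gives
\[ h_A(v)=\sum_{p\ \text{loc. max}} \langle p,v\rangle-\sum_{q\ \text{loc. min}}\langle q,v\rangle \]
in the $-v$-height, up to the obvious sign bookkeeping. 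Equivalently, this is Mani's formula from the remark after proposition \ref{SuppAddExt}. Convexity bounds the number of critical points. The tangent angle of $\gamma$ is monotone and has total turning less than $\pi$, so $\langle\gamma',v\rangle$ changes sign at most once in the interior. Hence $A$ has at most one interior critical point plus the two endpoints, and the same holds for the convex polygons $S_n$. Consequently $|h_{S_n}(v)|$ and $|h_C(v)|$ are bounded by $3R$, where $R$ is the radius of a ball containing $C$ (and hence every $S_n$).

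Next we need pointwise convergence $h_{S_n}(v)\to h_C(v)$ for almost every $v$. We exclude the finitely many (or measure zero) directions for which $-v$ is orthogonal to the tangent at an endpoint of $C$, or to the chord, or for which some vertex of some $S_n$ is a degenerate critical point. The last set is countable, being a countable union of finite sets. For the remaining $v$, the endpoints of $S_n$ coincide with those of $C$, so their contributions agree exactly. If $C$ has an interior critical point $\gamma(s_0)$, then by monotonicity of the edge directions $S_n$ has exactly one interior critical vertex $\gamma(k/2^n)$, with $|k/2^n-s_0|\le 2^{-n+1}$. The reason is that the sign of $\langle\gamma(k+1)-\gamma(k),v\rangle$ matches the sign of $\langle\gamma',v\rangle$ at some intermediate point, by the mean value theorem. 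Its height therefore converges to $\langle\gamma(s_0),v\rangle$. If $C$ has no interior critical point, then $S_n$ has none either. Thus $h_{S_n}(v)\to h_C(v)$ for almost every $v$, and dominated convergence on $\bb S^1$ with the bound $3R$ gives item 3.

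Finally, $S_n$ is a planar simplicial complex, for which theorem \ref{PeriRecovery} has already been established. So $\int_{\bb S^1}h_{S_n}=\Peri(S_n)$, and letting $n\to\infty$ yields $\int_{\bb S^1}h_C=\Peri(C)$. I expect the main obstacle to be the careful justification of the critical-point description of $h$ for arcs. This includes the sign conventions with $-v$, the possible lack of genericity at endpoints, and, above all, the claim that the single interior critical vertex of $S_n$ tracks the critical point of $C$. If the mean-value argument proves awkward, the fallback is to invoke Hausdorff convergence $S_n\to C$ together with the uniform component bound. With these, the Euler curves of $S_n$ and $C$ differ only on $t$-intervals of vanishing length around the critical values, and a direct $L^1$ estimate in $t$ gives the same conclusion.
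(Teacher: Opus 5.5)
Your proof is correct, but it handles item 3 differently from the paper. For items 1 and 2 you follow the paper: inscribed polygonal arcs, and convergence of inscribed lengths for a $\mathcal C^1$ curve.

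\textbf{How the paper does item 3.} It never looks at critical points. Writing $C=C_{v,t}\cup C_{-v,-t}$, additivity gives $\ECT(C)(v,t)+\ECT(C)(-v,-t)=1+\chi(C\cap L_{v,t})$. This turns $\int_{\mathbb S^1}h_C$ into the Crofton-type integral over lines $2\int_{\mathbb S^1_+}\int_{\mathbb R}\chi(C\cap L_{v,t})\,dt\,dv$, and the same holds for $S_n$. The integrands are bounded by $2$, because a line meets a convex arc in at most two points. Dominated convergence on the space of lines then reduces everything to one claim: for almost every line, the number of intersection points with $S_n$ is eventually the number with $C$. The paper checks this by a Jordan-curve type case analysis on $0$, $1$ or $2$ intersection points.

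\textbf{How you do item 3.} You stay on the circle of directions. You express $h$ of an arc through its critical values (Mani's formula). Both the domination and the almost-everywhere convergence then come from monotonicity of the tangent angle and the mean value theorem.

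\textbf{Comparison.} Your route proves something stronger: $h_{S_n}(v)\to h_C(v)$ for almost every $v$, not just convergence of the integrals. Your tracking of the single interior critical vertex is also more explicit than the paper's sketched Jordan-curve step. The paper's route, on the other hand, makes the Crofton content of the perimeter formula visible. It uses only the ``at most two intersections'' property of convex arcs, not the $\mathcal C^1$ tangent structure or the critical-point bookkeeping.

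\textbf{Small corrections.}
\begin{itemize}
\item The total turning of such an arc is at most $\pi$, not strictly less; a half circle attains $\pi$. This is harmless, since you only use generic $v$.
\item The bound $3R$ needs $R\geq\max_{x\in C}\|x\|$, i.e.\ a ball centred at the origin.
\item The endpoint contributions agree only for large $n$, once the first and last edges have the same $v$-sign as the endpoint tangents. Your exclusion of directions orthogonal to those tangents guarantees this.
\item You should parametrize regularly (e.g.\ by arc length) so that the tangent angle is defined everywhere.
\end{itemize}
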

\begin{proof}
	Since $C$ is $\mathcal C^1$, one can pick evenly spaced points on $C$ to obtain approximations of it by polygonal curves whose lengths approach the length of $C$. This sequence obviously verifies the first two required properties. To prove the third, write $C= C_{v,t} \cup C_{-v,-t}$ and apply additivity to get \[\ECT(C)(v,t)+\ECT(C)(-v,-t)=1+\chi(C \cap L_{v,t})\, ,\]
	where $L_{v,t}$ is the line segment with normal vector $v$ placed at height $t$. Splitting the integral over the upper and lower hemispheres, and noting that $1=\bb 1_{[0, +\infty)}(t)+\bb 1_{(0, +\infty)}(-t)$ we find
	\[ \int_{\bb S^{1}} h_C(v) = 2 \int_{\bb S^{1}_+} \int_{\bb R} \chi(C \cap L_{v,t}) dt dv\, . \]
	The same holds when replacing $C$ by $S_n$. Let $p_n: (v, t) \mapsto \chi(S_n \cap L_{v,t})$, and $p: (v,t) \mapsto \chi(C \cap L_{v,t})$. Since $S_n$ and $C$ are convex curves, $p_n$ and $p$ cannot surpass $2$. Thus, every function $p_n$ is bounded by the constant function with value $2$ on the compact subset of $\bb S^1_+ \times \bb R$ representing the lines which intersect $C$. By the dominated convergence theorem, we need only prove that $p_n$ converges to $p$ almost everywhere, that is, for almost every line $\ell$, the number of points of $S_n$ in it is eventually equal to the number of points of $C$ in it. We can ignore lines which intersect $S_n$ or $C$ at intervals, and lines tangent to $C$, since they have measure zero. Thus, we distinguish three cases:
	\begin{enumerate}
		\item $p(v,t)=0$: $\ell$ is disjoint from $C$. Then it is also disjoint from $S_n$, as $S_n$ lies in the convex closure of $C$. Thus $p_n(v,t)=0$.
		\item $p(v,t)=1$: $\ell$ intersects $C$ at one point, non tangentially. A Jordan curve-type argument applied to the curves drawn by the edges of $S_n$ and the arcs of $C$ separated the vertices of $S_n$ shows that $\ell$ must touch $S_n$ at exactly one point (if it crossed another, it would inevitably hit $C$ again). Thus $p_n(v,t)=1$.
		\item $p(v,t)=2$: $\ell$ intersects $C$ at two points, $p_1$ and $p_2$. For large enough $n$, $p_1$ and $p_2$ will lie in two different arcs separated by the vertices of $S_n$. The same argument as before shows that $\ell$ must hit $S_n$ twice. Thus $p_n(v,t) \overset{n \to \infty}{\longrightarrow} 2$.\qedhere
	\end{enumerate}
\end{proof}

\begin{proof}[Proof of theorem \ref{Approx}]
	We first describe a triangulation of $\partial M$ (that is, the image of boundary and isolated edges of any triangulation of $M$). Since $\partial M$ consists of finitely many $\mathcal C^1$ curves, consider for each an approximation by polygonal curves whose vertices lie in $\partial M$. Choose these decompositions fine enough so that for each line segment of the polygonal curves, the arc of $\partial M$ drawn between the two points it joins is either concave, convex, or perfectly matches the line segment. Naming $S$ this set of polygonal curves, let $f: S \to \partial M$ be this $\mathcal C^1$ triangulation. Note that, by construction, all vertices of $S$ are fixed points of $f$. We now extend $f$ to a triangulation $\tilde f$ of $M$, as follows:
	\begin{itemize}
		\item Triangulate the convex closure of $S$ in a compatible way with the natural triangulation of $S$. By applying barycentric subdivision if necessary, suppose each added face contains at most one edge of $S$ on its boundary.
		\item Fix $\tilde f(v) = v$ for all vertices $v$ of the resulting triangulation, and $\tilde f|_{S}=f$. Extend $\tilde f$ linearly to all added edges, and then to all faces (non-linearly on faces with a boundary edge).
		\item The interior of each $2$-simplex now either maps completely inside $M$ or outside $M$. Only add to $S$ all those who map to $M$, together with their boundaries.
		\item Finally, add any isolated points $M$ might have to $S$ and extend $\tilde f$ as the identity on them.
	\end{itemize}
	
	By repeating this process with finer and finer triangulations of $\partial M$, we obtain a sequence $S_n$ of simplicial complexes who, by construction, verify the first two points of the theorem. As for the third, let us show that
	\[ \int_{\bb S^{1}} (h_{S_n}(v)-h_{M}(v))  dv \overset{n \to \infty} \longrightarrow 0\, .\]
	Using additivity, decompose $S_n$ as the union of its faces, isolated edges, and isolated points, and decompose $M$ as the image under $\tilde f$ of that same union. By construction of $\tilde f$, faces that don't contain boundary edges, internal edges and all vertices remain fixed under $\tilde f$. These terms cancel out, so the integral equals
	\[ \sum_{F} \int(h_F -h_{\tilde f(F)}) + \sum_{e} \int(h_e -h_{\tilde f(e)})\, , \]
	where the first sum ranges over faces of $S_n$ with a boundary edge, and the second over isolated edges. Focusing first on the isolated edges, $\int h_e$ is twice the length of the straight-line segment $e$, and $h_{\tilde f(e)}$ is twice the length of the arc of $M$ between the endpoints of $e$, by lemma \ref{ConvexCurve}.
	
	For faces, we distinguish between two cases: for a face $F$, $\tilde f(F)$ differs from $F$ by either introducing a ``bump'' or a ``dent'' (see figure \ref{BumpDent}), depending on whether the image under $\tilde f $ of its boundary edge is concave or convex.
	\begin{figure}
		\centering
	\includegraphics[scale = 0.45]{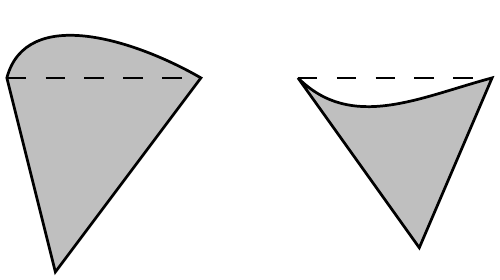}
	\caption{Two faces of the triangulation $S_n$ and their images under $\tilde f$. The first is a bump, the second is a dent. In each case, the part added or removed to pass from $F$ to $\tilde f(F)$ is convex.}
	\label{BumpDent}
	\end{figure}
	In the case of a bump, we have $\tilde f(F)= F \cup B$, with $B$ being the added, convex, bump. Thus, by additivity $h_F-h_{\tilde f(F)}= h_{F \cap B}-h_B$. Now, $F \cap B$ is nothing but the boundary edge of $F$, so both terms on the right are convex. Since the perimeter formula holds for convex shapes\footnote{If one wishes to not use this fact, a similar proof to the one of lemma \ref{ConvexCurve} shows the assertion for bumps.}, the difference of their integrals turns out to be the length of the boundary edge minus the length of its image under $\tilde f$.
	
	In the case of dent, write $F= \tilde f(F) \cup D$, with $D= \overline{F \setminus \tilde f(F)}$ being the dent, and applying additivity, we have $f_F-h_{\tilde f(F)}= h_D-h_{D \cap \tilde f(F)}$. $D$ is again convex, with one component of its boundary being the boundary edge of $F$, and the other its image under $\tilde f$. On the other hand, $D \cap \tilde f(F)$ is a convex curve, so by applying lemma \ref{ConvexCurve}, the difference of integrals again turns out to be the length of the boundary edge minus the length of its image under $\tilde f$.
	
	Collecting terms, the difference of integrals for any fixed $n$ equals:
	\[\Peri(S_n)-\Peri(M). \]
	Since $S_n$ verifies point two, this converges to zero.
	\end{proof}
	
\end{document}